\renewcommand{\d}{\mathrm d}               
\newcommand{\Lie}{\pounds}    
\newcommand{\ab}{\mathrm{ab}} 
\newcommand{\ext}{\mathrm{ext}}  
\newcommand{\eps}{\varepsilon}  
\newcommand{\genus}{\approx} 
   \renewcommand{\a}{\alpha}
\newcommand{\R}{\mathbb{R}}
\newcommand{\Ss}{\mathbb{S}}
\newcommand{\T}{\mathbb{T}}
\newcommand{\Z}{\mathbb{Z}}
   \newcommand{\cG}{\mathcal{G}}
    \newcommand{\cH}{\mathcal{H}}
  \newcommand{\cN}{\mathcal{N}}
\newcommand{\s}{\mathbf{s}}             
\renewcommand{\t}{\mathbf{t}}           
\newcommand{\A}{A}                      
\renewcommand{\gg}{\mathfrak{g}}        
\newcommand{\hh}{\mathfrak{h}}          
\newcommand{\tto}{\rightrightarrows}    
\DeclareMathOperator{\Ker}{Ker}           
\DeclareMathOperator{\coKer}{coKer}           
\DeclareMathOperator{\im}{Im}           
\renewcommand{\Im}{\im}
\DeclareMathOperator{\Ad}{Ad}           
\DeclareMathOperator{\Per}{Per}           
\DeclareMathOperator{\SPer}{SPer}           
\DeclareMathOperator{\pr}{pr}      
\newtheorem{theorem}{Theorem}[section]
\newtheorem{lemma}[theorem]{Lemma}
\newtheorem{proposition}[theorem]{Proposition}
\newtheorem{corollary}[theorem]{Corollary}
\theoremstyle{definition}
\newtheorem{definition}[theorem]{Definition}
\newtheorem{example}[theorem]{Example}
\newtheorem{remark}[theorem]{Remark}
\begin{document}
\title[Genus Integration]{Genus Integration, Abelianization and Extended Monodromy}

\author{Ivan Contreras}

\address{Department of Mathematics \& Statistics, Campus Box 2239, Amherst College,
Amherst, MA 01002}
\email{icontreraspalacios@amherst.edu}

\author{Rui Loja Fernandes}
\address{Department of Mathematics, University of Illinois at Urbana-Champaign, 1409 W. Green Street, Urbana, IL 61801 USA}
\email{ruiloja@illinois.edu}

\thanks{RLF was partially supported by NSF grants DMS-1405671, DMS-1710884, and a Simons Fellowship in Mathematics.}

\begin{abstract}
Given a Lie algebroid we discuss the existence of a smooth abelian integration of its abelianization. We show that the obstructions are related to the extended monodromy groups introduced recently in \cite{CFMb}. We also show that this groupoid can be obtained by a path-space construction, similar to the Weinstein groupoid of \cite{CF1}, but where the underlying homotopies are now supported in surfaces with arbitrary genus. {As an application, we show that the prequantization condition for a (possibly non-simply connected) manifold is equivalent to the smoothness of an abelian integration.} Our results can be interpreted as a generalization of the classical Hurewicz theorem.
\end{abstract}

\maketitle

\setcounter{tocdepth}{1}
\tableofcontents


\section{Introduction}

A complete solution to the problem of integrating a Lie algebroid $A\to M$ to a Lie groupoid $\cG\tto M$ was given in \cite{CF1}. The crucial idea underlying this solution is the path-space construction, which has its roots in the Poisson sigma-model \cite{CaF,CF2,Ikeda,SS94} and Sullivan's rational homotopy theory \cite{Severa}: given a Lie algebroid $A$ one considers the space of $A$-paths $P(A)$, a certain Banach manifold. On $P(A)$ there is an equivalence relation $\sim$ given by the so-called $A$-homotopies, and the quotient space:
\[ \cG(A):=P(A)/\sim \]
is a topological groupoid, called the {\bf Weinstein groupoid}. Moreover, there exists a Lie groupoid with Lie algebroid $A\to M$ if and only if the Weinstein groupoid $\cG(A)\tto M$ has a smooth structure for which the projection $P(A)\to\cG(A)$ is a submersion. In this case, $\cG(A)$ is the source 1-connected integration of $A$. In \cite{CF1}, the obstructions to integrability are also obtained (see below).

The notion of $A$-homotopy plays a crucial role in the construction of the Weinstein groupoid. Recall that an {\bf $A$-path} can be thought of as a Lie algebroid morphism $a:TI\to A$, where $I=[0,1]$. Similarly, an {\bf $A$-homotopy} between $A$-paths $a_0$ and $a_1$ is a Lie algebroid morphism $h:T(I\times I)\to A$, such that $h|_{TI\times\{i\}}=a_i$. We will extend the notion of $A$-homotopy to allow for {\bf $A$-homologies}, which are Lie algebroid morphisms $T\Sigma\to A$, where instead of the square $I\times I$ we allow for surfaces $\Sigma$ of arbitrary genus. This gives a coarser equivalence relation $\genus$ on $P(A)$ and we define the {\bf genus integration} of $A$ to be the topological groupoid:
\[ \cG_g(A)=P(A)/\genus. \]
There is an obvious surjective groupoid morphism $p:\cG(A)\to\cG_g(A)$ and the following two natural questions arise:
\begin{itemize}
\item What is the geometric meaning of the genus integration?
\item Can the genus integration be smooth, i.e., a Lie groupoid? If yes, what are the obstructions to genus-integrability?
\end{itemize}
This paper is devoted to give complete answers to these questions. It is well-known that one can think of $\cG(A)$ as the first homotopy group(oid) of the generalized space represented by $A$. In brief, our results show that one should think of $\cG_g(A)$ as the first homology group(oid) of the generalized space represented by $A$.

An {\bf abelian Lie algebroid} is an algebroid whose isotropy Lie algebras are all abelian. Given a Lie algebroid $A\to M$ its {\bf abelianization} is given by a morphism $p:A\to A^\ab$ to an abelian Lie algebroid which is universal among morphisms into abelian algebroids: for any \emph{abelian} Lie algebroid $B\to N$ and any Lie algebroid morphism $\phi:A\to B$, there is a unique Lie algebroid morphism $\overline{\phi}:A^\ab\to B$ such that:
\[
\xymatrix{
A\ar[d]_p\ar[r]^{\phi} & B \\
A^\ab \ar@{-->}[ru]_{\overline{\phi}} 
}
\]
The abelianization may fail to exist. However, when $A\to M$ is transitive, the abelianization $A^\ab\to M$ always exists and is given by factoring out the commutators of the isotropy Lie algebras.

Similarly, an {\bf abelian groupoid} is a groupoid whose isotropy groups are all abelian.  Given a (Lie) groupoid $\cG\tto M$ one defines its {\bf abelianization} as a morphism $p:\cG\to\cG^\ab$ to an abelian (Lie) groupoid satisfying a similar universal property: for any abelian (Lie) groupoid $\cH$ and any morphism $\Phi:\cG\to\cH$ there is a unique morphism $\overline{\Phi}:\cG^\ab\to \cH$ such that:
\[
\xymatrix{
\cG\ar[d]_p\ar[r]^{\Phi} & \cH \\
\cG^\ab \ar@{-->}[ru]_{\overline{\Phi}} 
}
\]
Note that the abelianization depends on which category one works. In the category of sets, any groupoid has an abelianization, which is obtained by factoring out the commutators of the isotropy groups. But in the smooth category, a Lie groupoid may fail to have an abelianization, and if it exists it may be different from the set-theoretical abelianization. 

Our first result states that $p:\cG(A)\to\cG_g(A)$ can be thought of as a generalization of the Hurewicz homomorphism (to recover the classical case take $A=TM$):

\begin{theorem}
\label{thm:main:1:int}
Let $A\to M$ be a Lie algebroid. The genus integration $\cG_g(A)$ is the (set-theoretical) abelianization of the Weinstein groupoid $\cG(A)$.
\end{theorem}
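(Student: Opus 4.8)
The plan is to verify directly that $\cG_g(A) = \cG(A)/\genus$ satisfies the universal property of the set-theoretical abelianization of $\cG(A)$: namely, that the commutator subgroups of the isotropy groups of $\cG(A)$ are precisely the kernels of the projection $p:\cG(A)\to\cG_g(A)$ at each isotropy, and that the induced quotient groupoid is abelian. Concretely, fixing a point $x\in M$, I want to show that two elements of the isotropy group $\cG(A)_x$ (represented by $A$-loops $a_0,a_1$ based at $x$) map to the same element of $\cG_g(A)_x$ if and only if the corresponding loop classes differ by an element of the commutator subgroup of $\cG(A)_x$. Since outside the isotropy groups there is nothing to check (a groupoid morphism that is the identity on units and an isomorphism outside isotropy is automatically the abelianization once it is so on isotropy), this reduces the whole statement to a statement about the isotropy groups, which are honest (topological) groups.

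First I would recall/establish the comparison between $A$-homotopies and $A$-homologies on loops. An $A$-homology $h:T\Sigma\to A$ between two $A$-loops is a Lie algebroid morphism from the tangent bundle of a surface $\Sigma$ with two boundary circles carrying $a_0$ and $a_1$; the genus of $\Sigma$ is arbitrary. The key topological input is the classification of surfaces: a compact oriented surface with two boundary components and genus $g$ is obtained from a disk (or a cylinder) by attaching $g$ handles, and attaching a handle corresponds, on the level of the fundamental group(oid), to inserting a commutator $[\alpha,\beta]$. So an $A$-homology realizing genus $g$ between $a_0$ and $a_1$ should be decomposable — up to ordinary $A$-homotopy — into an $A$-homotopy composed with the insertion of $g$ commutators of $A$-loops. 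This is where I would do the real work: cutting the surface $\Sigma$ along a system of curves into a "pair of pants / handle" decomposition, restricting the Lie algebroid morphism $h$ to each piece, and reading off that each handle contributes a commutator in $\cG(A)_x$ while each trivial piece contributes an $A$-homotopy. One must be careful that the concatenation points can all be taken at the fixed base point $x$ (or connected to it by fixed $A$-paths), so that everything really lives in the isotropy group $\cG(A)_x$; this is handled exactly as in the standard proof that $\pi_1$ surjects onto $H_1$ with kernel the commutator subgroup, transported through the Lie algebroid morphism formalism of \cite{CF1}.

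With that decomposition in hand, the two inclusions follow. If $[a_0]$ and $[a_1]$ differ by a product of commutators in $\cG(A)_x$, then each commutator $[\alpha,\beta]$ of $A$-loops bounds a once-punctured torus mapping to $A$ (build the $A$-homology on a handle by gluing the $A$-homotopies that witness $\alpha\beta\alpha^{-1}\beta^{-1}$), so $a_0\genus a_1$, giving $p([a_0])=p([a_1])$. Conversely, if $p([a_0])=p([a_1])$, pick an $A$-homology $h$ of some genus $g$; the decomposition above exhibits $[a_0][a_1]^{-1}$ as a product of $g$ commutators in $\cG(A)_x$, hence it lies in the commutator subgroup. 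Therefore $\cG_g(A)_x = \cG(A)_x/[\cG(A)_x,\cG(A)_x]$ for every $x$, the quotient groupoid $\cG_g(A)$ has abelian isotropy, and the universal property is immediate: any morphism from $\cG(A)$ to an abelian groupoid kills all isotropy commutators, hence factors uniquely through $p$. Finally, taking $A=TM$ recovers $\cG(TM)=\Pi_1(M)$ and $\cG_g(TM)=\Pi_1(M)^{\ab}$, i.e. the groupoid whose isotropy at $x$ is $H_1(M_x)$ where $M_x$ is the connected component — the classical Hurewicz statement.

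The main obstacle I anticipate is making the surface-decomposition step rigorous \emph{at the level of Lie algebroid morphisms} rather than at the level of continuous maps: one needs that an $A$-homology over a handle can be cut and reglued, and that the gluing/concatenation operations used are compatible with the equivalence relation $\sim$ on $P(A)$. This requires the technology of pulling back and restricting Lie algebroid morphisms to submanifolds with corners and the gluing lemma for $A$-paths and $A$-homotopies from \cite{CF1}; once those tools are granted, the argument is a faithful translation of the topological proof of "$H_1 = \pi_1^{\ab}$."
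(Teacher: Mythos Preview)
Your proposal is correct and follows essentially the same strategy as the paper: reduce to isotropy groups, show that commutators of $A$-loops are $A$-homologous to the trivial loop (giving abelianness of $\cG_g(A)$), and conversely cut an $A$-homology $h:T\Sigma\to A$ along generators of $\pi_1(\Sigma)$ to exhibit the boundary $A$-loop as a product of commutators.

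The one notable difference is tactical. Where you propose an iterated handle/pair-of-pants decomposition and then worry about gluing Lie algebroid morphisms across the pieces, the paper cuts $\Sigma$ \emph{all at once} along a full system of generators $\gamma_1,\dots,\gamma_n,\eta_1,\dots,\eta_n$ based at the marked point, obtaining a single smooth parametrization $\phi:\Delta_{4n+1}\to\Sigma$ by a polygon. Then $h\circ\d\phi:T\Delta_{4n+1}\to A$ is manifestly an ordinary $A$-homotopy (the domain is a disk) between the boundary $A$-loop $a$ and the concatenation $\prod_i (a_i,b_i)$, where $a_i=h\circ\d\gamma_i$ and $b_i=h\circ\d\eta_i$. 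This single-polygon trick avoids entirely the repeated cut-and-reglue step you flagged as the main obstacle, so the technical concerns you raise at the end largely evaporate. Similarly, for the forward direction the paper uses a retraction of an annulus onto the outer square to show a single commutator is $A$-homologous to $0_x$, rather than building the punctured torus by hand; again this is the same idea but packaged so that smoothness is automatic.
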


Note that there is no assumption about integrability in the statement of this theorem. We will see that when $A$ is integrable and $\cG(A)$ is a Lie groupoid, the genus integration $\cG_g(A)$ may fail to be smooth. 

In \cite{CF1} it is shown that that the smoothness of $\cG(A)$, and hence the integrability of $A$, is controlled by the so-called {\bf monodromy groups} $\cN_x(A)$. These appear as the image of a {\bf monodromy homomorphism}:
 \[ \partial_x:\pi_2(L,x)\to \cG(\gg_x), \]
 where $L\subset M$ is the leaf of $A$ through $x$ and  $\cG(\gg_x)$ is the 1-connected Lie group integrating the isotropy Lie algebra $\gg_x$. One of the main results in \cite{CF1} states that $\cG(A)$ is smooth if and only the groups $\cN_x(A)$ are uniformly discrete.
 
 Following \cite{CFMb}, we will introduce an {\bf extended monodromy homomorphism}:
 \[ \partial_x^\ext:H_2(\tilde{L}^h)\to \cG(\gg_x^\ab), \]
 where $\tilde{L}^h$ is a certain holonomy cover of the leaf $L$. The image of this homomorphism is the {\bf extended monodromy group} $\cN^\ext_x(A)$. These different monodromies are related via a commutative diagram:
\[
\xymatrix@R=15pt{
\pi_2(L,x)\ar[d]\ar[r]^{\partial_x} & \cG(\gg_x)\ar[d]\\
H_2(\tilde{L}^h)\ar[r]_{\partial_x^\ext}& \cG(\gg_x^\ab)
}
\]
where the first column is induced from the Hurewicz map. Our second main theorem states that:

\begin{theorem}
\label{thm:main:2:int}
Let $A\to L$ be a transitive Lie algebroid with trivial holonomy. The following statements are equivalent:
\begin{enumerate}[(a)]
\item the genus integration $\cG_g(A)$ is smooth;
\item the extended monodromy groups are discrete;
\item the abelianization $A^\ab$ has an abelian integration.
\end{enumerate}
Moreover, if any of these hold, then $\cG_g(A)$ has Lie algebroid isomorphic to $A^\ab$.
\end{theorem}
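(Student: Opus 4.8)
The plan is to reduce all three statements to a single discreteness condition on the isotropy group of $\cG_g(A)$ at a fixed point $x\in L$, and then read off the equivalences. Since $A\to L$ is transitive, $\cG_g(A)\tto L$ is a transitive topological groupoid, so, exactly as for $\cG(A)$ in \cite{CF1}, it carries a Lie groupoid structure making $P(A)\to\cG_g(A)$ a submersion precisely when its isotropy group $\cG_g(A)_x$ is a Lie group with the quotient topology. By Theorem~\ref{thm:main:1:int} this isotropy group is the abelianization of $\cG(A)_x$; more usefully, it is the set of $A$-loops based at $x$ modulo $A$-homologies. The first step, and the technical heart of the argument, is to establish the homological counterpart of the $A$-path homotopy exact sequence of \cite{CF1}: a natural exact sequence of abelian groups
\[ H_2(\tilde{L}^h)\xrightarrow{\ \partial_x^\ext\ }\cG(\gg_x^\ab)\longrightarrow\cG_g(A)_x\longrightarrow H_1(\tilde{L}^h)\longrightarrow 0, \]
in which $\cG(\gg_x^\ab)$ is the vector group $\gg_x^\ab$, the first arrow is the extended monodromy homomorphism (so $\im\partial_x^\ext=\cN^\ext_x(A)$ is exactly the indeterminacy of $A$-loops up to $A$-homology), and $H_1(\tilde{L}^h)$ carries the discrete topology. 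Granting this, $\cG_g(A)_x$ is an extension of $H_1(\tilde{L}^h)$ by $\cG(\gg_x^\ab)/\cN^\ext_x(A)$, hence a Lie group with the quotient topology if and only if $\cN^\ext_x(A)$ is discrete; transitivity lets one transport the isotropy from $x$ to any other point, so this is the equivalence (a)$\Leftrightarrow$(b).

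For (a)$\Rightarrow$(c): when $\cG_g(A)$ is a Lie groupoid it is an abelian groupoid, since all its isotropy groups are abelianizations. Differentiating the unit section gives a transitive Lie algebroid $\mathrm{Lie}(\cG_g(A))$ whose isotropy is $\gg_x^\ab$ (quotienting the vector group $\gg_x^\ab$ by the now-discrete $\cN^\ext_x(A)$ does not change the Lie algebra), and the morphism $A\to\mathrm{Lie}(\cG_g(A))$ obtained by differentiating $P(A)\to\cG_g(A)$ restricts to the abelianization $\gg_x\to\gg_x^\ab$ on isotropy and to the identity on $TL$. Since both $A^\ab$ and $\mathrm{Lie}(\cG_g(A))$ are extensions of $TL$ by the bundle $\gg^\ab$, a diagram chase identifies $\mathrm{Lie}(\cG_g(A))\cong A^\ab$; this proves the last assertion of the theorem and exhibits $\cG_g(A)$ as an abelian integration of $A^\ab$, which is (c).

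For (c)$\Rightarrow$(b): let $\cH\tto L$ be an abelian integration of $A^\ab$. Lifting $A$-paths through $\cH$ along $p\colon A\to A^\ab$, as in the construction of the Weinstein groupoid, produces a morphism of topological groupoids $\cG(A)\to\cH$ integrating $p$; since $\cH$ is abelian this factors through $\cG(A)^\ab=\cG_g(A)$, giving $\Psi\colon\cG_g(A)\to\cH$. By the exact sequence above, $\cN^\ext_x(A)$ is the kernel of $\cG(\gg_x^\ab)\to\cG_g(A)_x$, so it maps to the identity of $\cH_x$ under $\Psi$; but the composite $\cG(\gg_x^\ab)=\gg_x^\ab\to\cH_x$ is the exponential map of the isotropy Lie algebra of $\cH$ at $x$ (which is $\gg_x^\ab$, since $\cH$ integrates $A^\ab$), and, $\cH_x$ being abelian, this exponential is a homomorphism with discrete kernel $\Lambda_x$. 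Hence $\cN^\ext_x(A)\subseteq\Lambda_x$ is discrete, which is (b); combined with (a)$\Leftrightarrow$(b) and (a)$\Rightarrow$(c) this closes the cycle (a)$\Rightarrow$(c)$\Rightarrow$(b)$\Rightarrow$(a).

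The hard part is the homological exact sequence of the first paragraph, and in particular the identification $\im\partial_x^\ext=\cN^\ext_x(A)$ as the exact indeterminacy of $A$-loops up to $A$-homology. This is the higher-genus analogue of the deformation arguments of \cite{CF1}: given an $A$-homology over a genus-$g$ surface $\Sigma$ between two $A$-loops, one cuts $\Sigma$ along a standard system of $2g$ curves to reduce the comparison to a disk together with a product of handle contributions valued in the isotropy, and must then recognize the total isotropy element as $\partial_x^\ext$ applied to the class obtained by capping $\Sigma$ off inside the holonomy cover $\tilde{L}^h$ — checking along the way that this class depends only on the homology class of $\Sigma$ and that every class in $H_2(\tilde{L}^h)$ arises this way (which uses that every $2$-homology class is represented by a map from a closed oriented surface). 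The hypothesis of trivial holonomy enters precisely here: it trivializes the isotropy coefficient system, so that these surface classes lie in the ordinary homology $H_2(\tilde{L}^h)$ and $\partial_x^\ext$ is defined. Tracking base points and the compatibility of all of this with the abelianization maps $A\to A^\ab$ and $\cG(A)\to\cG_g(A)$ is where the bulk of the work lies.
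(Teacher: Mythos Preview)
Your approach is essentially the paper's: the ``hard part'' you isolate---that the kernel of $\cG(\gg_x^\ab)\to\cG_g(A)_x^0$ is exactly $\cN^\ext_x(A)$---is the paper's Proposition~\ref{prop:genus:monodromy}, and your sketch of its proof (cut the genus-$g$ square along a standard system of curves, reduce to a polygon, integrate the curvature $\Omega$) matches the paper's argument. Your four-term exact sequence with $H_1(L)$ at the end is a correct but inessential elaboration; only exactness at $\cG(\gg_x^\ab)$ is actually used.

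One point where the paper is tighter than your outline: for the implication ``$\cN^\ext_x$ discrete $\Rightarrow$ $\cG_g(A)$ smooth'' you invoke a general principle that a transitive topological groupoid is Lie once its isotropy is a Lie group in the quotient topology, citing analogy with \cite{CF1}. The paper avoids having to justify such a criterion for $\cG_g$. It first reduces to the abelian case via the isomorphism $\cG_g(A)\cong\cG_g(A^\ab)$, then observes that discreteness of $\cN^\ext_x$ forces discreteness of the ordinary monodromy $\cN_x(A^\ab)$, so that $\cG(A^\ab)$ is already a \emph{Lie} groupoid by \cite{CF1}; finally $\cG_g(A)$ is realized as the quotient of $\cG(A^\ab)$ by the closed normal bundle of groups with fibers $\cN^\ext_x/\cN_x$, which is smooth by standard results on quotients of Lie groupoids. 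This route also makes the identification $\mathrm{Lie}(\cG_g(A))\cong A^\ab$ immediate, without the diagram chase you propose.
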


 
 As an immediate corollary we obtain:
 
 \begin{corollary}
If $A\to M$ is a transitive Lie algebroid with trivial holonomy whose monodromy and extended monodromy are both discrete, then the genus integration $\cG_g(A)$ is the abelianization of the Weinstein groupoid $\cG(A)$ in the smooth category.
 \end{corollary}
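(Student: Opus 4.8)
The plan is to assemble three facts already available: the integrability criterion of \cite{CF1}, Theorem~\ref{thm:main:1:int}, and Theorem~\ref{thm:main:2:int}. First I would note that, since $A$ is transitive with trivial holonomy, each leaf is homogeneous, so discreteness of the monodromy groups $\cN_x(A)$ is the same as their uniform discreteness; by the main theorem of \cite{CF1} this forces $\cG(A)$ to be a Lie groupoid, with the quotient $\pi\colon P(A)\to\cG(A)$ a surjective submersion. Next, discreteness of the extended monodromy groups $\cN^\ext_x(A)$ is condition~(b) of Theorem~\ref{thm:main:2:int}, hence condition~(a) also holds: $\cG_g(A)$ is smooth, the quotient $q\colon P(A)\to\cG_g(A)$ is a surjective submersion, and $\cG_g(A)$ has Lie algebroid $A^\ab$. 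By Theorem~\ref{thm:main:1:int}, $\cG_g(A)$ is the set-theoretical abelianization of $\cG(A)$, so its isotropy groups are abelian; being also a Lie groupoid, $\cG_g(A)$ is an abelian Lie groupoid (consistently with its Lie algebroid being $A^\ab$, the abelianization of the Lie algebroid $A$ of $\cG(A)$).

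With this in place, everything reduces to checking that the canonical projection $p\colon\cG(A)\to\cG_g(A)$ and every factorization through it are smooth. Since $p\circ\pi=q$, with $\pi$ a surjective submersion and $q$ smooth, $p$ is smooth, i.e.\ a morphism of Lie groupoids. Now let $\cH$ be an abelian Lie groupoid and $\Phi\colon\cG(A)\to\cH$ a morphism of Lie groupoids. The set-theoretical universal property of Theorem~\ref{thm:main:1:int} yields a unique groupoid morphism $\overline{\Phi}\colon\cG_g(A)\to\cH$ with $\overline{\Phi}\circ p=\Phi$; then
\[
\overline{\Phi}\circ q=\overline{\Phi}\circ p\circ\pi=\Phi\circ\pi
\]
is smooth, and because $q$ is a surjective submersion it follows that $\overline{\Phi}$ is smooth. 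Uniqueness of a smooth such $\overline{\Phi}$ is immediate from surjectivity of $p$. Hence $\cG_g(A)$, together with $p$, satisfies the universal property of the abelianization of $\cG(A)$ in the smooth category.

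I do not expect a genuine obstacle here; the one point that deserves care is the automatic smoothness of a set-theoretical factorization through $\cG_g(A)$, which is exactly what the submersion $q\colon P(A)\to\cG_g(A)$ buys us — the same mechanism used to transport the Banach structure of $P(A)$ down to $\cG_g(A)$ in the first place. One must also be careful to invoke both hypotheses, since they are logically \emph{independent}: discreteness of $\cN_x(A)$ is what makes $\cG(A)$ smooth, while discreteness of $\cN^\ext_x(A)$ is what makes $\cG_g(A)$ smooth, and — as the remark following Theorem~\ref{thm:main:1:int} warns — one can have the former without the latter, in which case the statement would fail for lack of a target, respectively a source, in the smooth category.
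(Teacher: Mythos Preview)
Your argument is correct and follows the approach implicit in the paper, which simply calls this an ``immediate corollary'' without spelling out a proof. The skeleton is the same: discreteness of $\cN_x(A)$ makes $\cG(A)$ smooth, discreteness of $\cN^\ext_x(A)$ makes $\cG_g(A)$ smooth (Theorem~\ref{thm:main:2:int}), and Theorem~\ref{thm:main:1:int} identifies $\cG_g(A)$ as the set-theoretical abelianization; one then checks the universal property in the smooth category.

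Two minor comments. First, your phrase ``each leaf is homogeneous'' is slightly off: since $A\to M$ is transitive there is a single leaf, and the relevant point is simply that for a transitive algebroid discreteness of $\cN_x(A)$ at one $x$ is equivalent to integrability. Second, your detour through the Banach manifold $P(A)$ to establish smoothness of $\overline{\Phi}$ works, but there is a more direct route already contained in the proof of Theorem~\ref{thm:main:2:int}: under the hypotheses, $\cG_g(A)$ is the quotient of $\cG(A)$ by the \emph{discrete} normal bundle of groups $\cN^\ext_x/\cN_x$, so $p\colon\cG(A)\to\cG_g(A)$ is itself a local diffeomorphism, hence a surjective submersion. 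Then $\overline{\Phi}\circ p=\Phi$ smooth immediately gives $\overline{\Phi}$ smooth, without passing back to $P(A)$. This is presumably what the authors had in mind when calling the corollary ``immediate''.
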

 
When the holonomy is not trivial, the situation is a bit more tricky, and the extended monodromy obstructs the smoothness of the genus integration of a certain algebroid cover of $A$. We refer to Section \ref{section:smooth:genus:integration} for details. 

Using our results, we can deduce that the extended monodromy obstructs the existence of proper integrations, generalizing the results of \cite{CFMb}:

\begin{theorem}
If a Lie algebroid $A\to M$ admits a proper integration $\cG\tto M$ then all its extended monodromy groups are discrete.
 \end{theorem}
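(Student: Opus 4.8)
The plan is to deduce this from Theorem \ref{thm:main:2:int} by a leafwise reduction argument. Suppose $\cG \tto M$ is a proper integration of $A$; we must show that for every $x \in M$ the extended monodromy group $\cN^\ext_x(A)$ is discrete. Since properness and the extended monodromy are both quantities that only depend on the restriction of $A$ to the leaf $L$ through $x$ (indeed $\partial_x^\ext$ is defined from $\tilde{L}^h$ and $\gg_x^\ab$), the first step is to reduce to the transitive case: the restriction $\cG_L \tto L$ is a proper Lie groupoid integrating the transitive algebroid $A_L \to L$, so without loss of generality $A$ is transitive over $L$.

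The second step handles the holonomy. If $A_L$ has trivial holonomy, then by the equivalence (a) $\Leftrightarrow$ (b) in Theorem \ref{thm:main:2:int} it suffices to know that properness of \emph{some} integration forces the genus integration $\cG_g(A_L)$ to be smooth; but in fact I expect it is cleaner to argue directly that properness forces the extended monodromy groups to be discrete, bypassing smoothness of $\cG_g$. The mechanism is the one already used in \cite{CFMb} for ordinary monodromy: a proper Lie groupoid has, near each orbit, a linearizable neighborhood, hence its isotropy groups vary "semicontinuously," and in particular the isotropy group $\cG_x$ is compact. The abelianization morphism $\cG \to \cG^\ab$ (which here one only needs set-theoretically, or on the level of the universal cover of the isotropy) sends $\cG_x$ onto a compact abelian group, and the extended monodromy group $\cN^\ext_x(A)$ is precisely the kernel of the covering $\cG(\gg_x^\ab) \to (\cG_x^\ab)_0$, hence is a \emph{closed}, and therefore discrete (being also countable, as the image of $H_2$), subgroup of the vector group $\cG(\gg_x^\ab)$. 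When the holonomy is nontrivial one passes to the holonomy cover $\tilde{L}^h$: the pullback algebroid there has trivial holonomy and still admits a proper integration (the pullback groupoid along a covering is still proper), so the previous argument applies verbatim.

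The main obstacle I anticipate is making precise the claim that properness of an integration of $A$ descends to properness of an integration of the relevant covered/transitive algebroid, and controlling the relation between the isotropy of that integration and the group $\cG(\gg_x^\ab)$ appearing in $\partial_x^\ext$. Concretely one needs: (i) that $\cG_L$, and then its pullback to $\tilde{L}^h$, remains Hausdorff and proper — pullback along the covering map $\tilde{L}^h \to L$ and restriction to a leaf both preserve properness, which is standard; and (ii) that compactness of the isotropy group $\cG_x$ of a proper integration, together with the universal property of abelianization, pins down $\cN^\ext_x(A)$ as a discrete subgroup — here one uses that a connected Lie group with compact quotient by a discrete subgroup that is a quotient of a vector group must have that discrete subgroup a lattice, in particular discrete. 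Once these two points are in place, the theorem follows; I would organize the writeup as: (1) reduce to $L$ transitive; (2) reduce to trivial holonomy via the holonomy cover; (3) observe $\cG_x$ compact; (4) identify $\cN^\ext_x$ with the kernel of $\cG(\gg_x^\ab)\to (\cG^\ab_x)_0$ and conclude discreteness.
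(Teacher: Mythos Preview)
Your outline matches the paper's proof in its first two reductions (restrict to a leaf, then pass to the holonomy cover) and in the key observation that properness forces the isotropy $\cG_x$ to be compact. However, there is a genuine gap in step~(i): the claim that ``pullback along a covering is still proper, which is standard'' is false when the cover is infinite. For example, pulling back the pair groupoid of $\Ss^1$ along $\R\to\Ss^1$ gives the pair groupoid of $\R$, which is not s-proper. The paper fills this gap by first reducing to $A$ abelian (quotienting $\cG$ by the closed bundle $(G_L^0,G_L^0)$, using the lemma that commutators in compact Lie groups are closed) and then arguing that for an s-proper integration of an abelian transitive algebroid the holonomy group $\Ad(G)$ is \emph{finite}: since $G^0$ is abelian it lies in the kernel of $\Ad$, so $\Ad(G)$ is a quotient of the finite group $G/G^0$. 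Only then is the pullback to $\tilde L^h$ guaranteed to remain s-proper.

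Your step~(4) also needs care. The identification ``$\cN_x^\ext$ is precisely the kernel of $\cG(\gg_x^\ab)\to(\cG_x^\ab)_0$'' is not correct as stated for an arbitrary proper integration $\cG$: when $\cG_x$ is disconnected, $(\cG_x,\cG_x)$ may be strictly larger than $(\cG_x^0,\cG_x^0)$, so $(\cG_x^\ab)_0$ may have Lie algebra strictly smaller than $\gg_x^\ab$ (cf.\ Example~\ref{ex:gauge:groupoid}), and the map from $\cG(\gg_x^\ab)$ is then not a covering. The paper avoids this by instead producing an \emph{abelian integration} of $A^\ab$ (quotienting an s-proper integration of $A^\ab$ by its full commutator bundle, which is closed by compactness and discrete because $\gg_x^\ab$ is abelian) and invoking Theorem~\ref{thm:main:2} directly via implication (c)$\Rightarrow$(a). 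Your containment idea (that $\cN_x^\ext$ sits inside some discrete kernel) can be made to work, but you would still need to manufacture an abelian integration with the correct Lie algebroid, which brings you back to the paper's argument.
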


{
Another consequence of our results concerns a geometric interpretation of the {\bf prequantization condition}: given a closed 2-form $\omega$ on a manifold $M$, this condition can be stated in terms of the group of periods of $\omega$ as:
\[ \Per(\omega):=\{\int_\gamma \omega:[\gamma]\in H_2(M,\Z)\}\subset \R\text{ is a discrete subgroup}. \]
On the other hand, a closed 2-form $\omega\in\Omega^2(M)$ is a Lie algebroid 2-cocycle on the tangent bundle $TM$ and hence defines a central extension $A_\omega=TM\oplus\R$. The group of periods of $\omega$ coincides with the extended monodromy groups of $A_\omega$. Using the genus integration we obtain the following extension of a result of \cite{Cr}:

\begin{theorem}
Let $\omega\in\Omega^2(M)$ be a closed 2-form. The following conditions are equivalent:
\begin{enumerate}[(i)]
\item $\omega$ is prequantizable: $\Per(\omega)=a\Z$, for some $a\in\R$;
\item the genus integration of $A_\omega$ is smooth;
\item there is a principal $\Ss^1_a$-bundle $\pi:P\to M$ with connection $\theta$ such that:
\[ \pi^*\omega=\d\theta. \]
\end{enumerate}
\end{theorem}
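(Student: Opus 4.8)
The plan is to read off the theorem from Theorem~\ref{thm:main:2:int} applied to $A = A_\omega$, combined with the classical integrality criterion for prequantization.

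\emph{Setting up.} First I record the structure of $A_\omega = TM\oplus\R$. Its anchor is the projection onto $TM$, so $A_\omega$ is a transitive Lie algebroid over $M$; its isotropy Lie algebra at every point is $\R$, hence abelian, so that $A_\omega^\ab = A_\omega$; and its holonomy is trivial, since the isotropy bundle is the trivial line bundle, the induced connection $\nabla_X g = X(g)$ (read off from $[(X,0),(0,g)] = (0,X(g))$) is the flat de Rham connection, and $M$ is the unique leaf. Therefore Theorem~\ref{thm:main:2:int} applies with $L = M$, and it identifies condition (ii) --- smoothness of $\cG_g(A_\omega)$ --- with each of: the extended monodromy groups of $A_\omega$ are discrete; and $A_\omega = A_\omega^\ab$ admits an abelian integration. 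Moreover, when these hold, $\cG_g(A_\omega)$ is a Lie groupoid with Lie algebroid $A_\omega$.

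\emph{(i) $\Leftrightarrow$ (ii).} Since $\gg_x^\ab = \R$ for all $x$, we have $\cG(\gg_x^\ab) = (\R,+)$, and, as recalled just before the statement, the extended monodromy group $\cN^\ext_x(A_\omega)\subset\R$ is exactly $\Per(\omega)$, for every $x$. A subgroup of $\R$ is discrete if and only if it equals $a\Z$ for some $a\in\R$ (with $a = 0$ allowed). Combining this with the reformulation of (ii) above yields $(ii)\Leftrightarrow(i)$.

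\emph{(ii) $\Leftrightarrow$ (iii).} Here the point is that an abelian integration of $A_\omega$ is essentially the same datum as a prequantization of $\omega$. For $(iii)\Rightarrow(ii)$: the connection $\theta$ splits the Atiyah sequence of $P$ and, because $\Ss^1_a$ is abelian, identifies the Atiyah algebroid $TP/\Ss^1_a$ with $TM\oplus\R$ carrying the bracket twisted by the curvature $\d\theta$; the curvature descends to $\omega$ on $M$ by the hypothesis $\pi^*\omega = \d\theta$, so $TP/\Ss^1_a\cong A_\omega$, and then the gauge groupoid $(P\times P)/\Ss^1_a\tto M$ is an abelian Lie groupoid integrating $A_\omega$; by Theorem~\ref{thm:main:2:int} this gives $(ii)$. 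For $(ii)\Rightarrow(iii)$: by the equivalence $(ii)\Leftrightarrow(i)$ we may assume $\Per(\omega) = a\Z$, and the existence of a principal $\Ss^1_a$-bundle with a connection of curvature $\omega$ is then precisely the classical Weil integrality criterion (which needs no simple-connectivity assumption): $\Per(\omega)\subset a\Z$ says $\tfrac{1}{a}[\omega]$ lies in the image of $H^2(M;\Z)\to H^2(M;\R)$, and any integral de Rham class is the curvature of a connection on a circle bundle with that Chern class (if $a = 0$ this just says $\omega$ is exact, and one takes the trivial $\R$-bundle).

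\emph{Where the difficulty lies.} The genuinely delicate point --- and the reason the last direction above is phrased through the classical criterion rather than purely groupoid-theoretically --- is the relation between $\cG_g(A_\omega)$ and the prequantum bundle. One cannot simply take $P$ to be a source fibre of $\cG_g(A_\omega)$: the isotropy group of $\cG_g(A_\omega)$ at $x_0$ is an extension \[ 0\longrightarrow \R/\Per(\omega)\longrightarrow \cG_g(A_\omega)^{x_0}_{x_0}\longrightarrow H_1(M;\Z)\longrightarrow 0, \] which is strictly larger than $\Ss^1_a = \R/\Per(\omega)$ whenever $H_1(M;\Z)\ne 0$ --- already for $M = T^2$ and $\omega = 0$, where $\cG_g(A_\omega)\cong\Pi_1(T^2)\times\R$ and the isotropy is $\Z^2\times\R$. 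What is true is that the gauge groupoid $(P\times P)/\Ss^1_a$ is a quotient of $\cG_g(A_\omega)$ by a discrete bundle of normal subgroups (each isomorphic to $H_1(M;\Z)$, the extension splitting because $\Ss^1_a$ is a divisible, hence injective, $\Z$-module), and $P$ is a source fibre of that quotient; a direct groupoid proof of $(ii)\Rightarrow(iii)$ thus hinges on constructing this quotient and checking it is again a Lie groupoid, which is the main obstacle --- it is sidestepped above by invoking the classical integrality criterion once $(i)$ is in hand. The remaining checks --- that $A_\omega$ is canonically the Atiyah algebroid of $P$, with the algebroid splitting matching a principal connection of curvature $\omega$ --- are routine bookkeeping once the sign and normalization conventions for $\Ss^1_a = \R/a\Z$ are fixed.
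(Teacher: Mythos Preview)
Your proof is correct, and for $(i)\Leftrightarrow(ii)$ and $(iii)\Rightarrow(ii)$ it matches the paper's argument exactly. The difference lies in $(ii)\Rightarrow(iii)$: you route through $(i)$ and invoke the classical Weil--Kostant integrality criterion as an external input, whereas the paper carries out precisely the groupoid-theoretic construction you sketch and then set aside in your final paragraph. Namely, the paper observes that the isotropy $\cG_g(A_\omega)_x$ is an abelian group whose identity component is $\Ss^1_a$; since $\Ss^1_a$ is divisible, $\mathrm{Ext}^1(-,\Ss^1_a)=0$ forces $\cG_g(A_\omega)_x\simeq \Ss^1_a\times D$ with $D$ discrete, and quotienting $\cG_g(A_\omega)$ by the resulting discrete normal bundle $D$ produces a transitive Lie groupoid with isotropy $\Ss^1_a$, whose source fibre is the prequantum bundle $P$ (with $\theta$ induced by the canonical splitting $\sigma$). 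Your route is shorter and leans on a standard result; the paper's route is self-contained and makes the geometric point that the prequantum bundle is literally manufactured out of the genus integration --- which is the conceptual payoff the section is aiming for. Since you already identified the splitting-via-divisibility mechanism, the only step you labelled an ``obstacle'' (that the quotient is again a Lie groupoid) is in fact routine: $D$ is a closed normal subgroupoid of a transitive Lie groupoid, so the quotient exists by the standard theory.
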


When $M$ is simply connected, the groups of periods and spherical periods of $\omega$ coincide and we recover the result of \cite{Cr}.

}

This paper is organized as follows. In Section 2, we study the abelianization of Lie algebroids and Lie groupoids. In Section 3, we introduce the genus integration of a Lie algebroid and we prove Theorem \ref{thm:main:1:int}. Section 4 contains our definition of extended monodromy and proves some of its properties, including the relationship with ordinary monodromy. Finally, in Section 5 we prove Theorem \ref{thm:main:2:int} and its corollaries. Section 6 contains a list of some open problems that naturally arises from our work and which we hope to address in the future. Appendix A contains an auxliar result about integration of algebroid morphisms.
\medskip

{\bf Acknowledgments.}  This paper evolved from independent discussions, on the one hand, with Owen Gwilliam and Alan Weinstein, on the genus integration in the special case of a Lie algebra and, on the other hand, with Marius Crainic, Ioan M\u{a}rcu\cb{t} and David Martinez-Torres, on the notion of extended monodromy for Poisson manifolds. We thank all of them for generously sharing their ideas with us, as well as the two anonymous referees for their detailed comments.

\section{Abelianization of algebroids and groupoids}


\subsection{Lie algebroids}

Let $A\to M$ be a Lie algebroid with anchor $\rho:A\to TM$ and Lie bracket $[~,~]:\Gamma(A)\times\Gamma(A)\to\Gamma(A)$. Recall that we say that $A$ is {\bf abelian} if the isotropy Lie algebras $\gg_x=\Ker\rho_x$ are abelian for all $x\in M$. 

\begin{definition}
\label{def:abelianization:algebrd}
Given a Lie algebroid $A\to M$ its {\bf abelianization} is an abelian Lie algebroid $A^\ab\to M$ together with a {surjective morphism $p:A\to A^\ab$ covering the identity}  such that: for any \emph{abelian} Lie algebroid $B\to N$ and any Lie algebroid morphism $\phi:A\to B$, there is a unique Lie algebroid morphism $\overline{\phi}:A^\ab\to B$ that makes the following diagram commute: 
\[
\xymatrix{
A\ar[d]_p\ar[r]^{\phi} & B \\
A^\ab \ar@{-->}[ru]_{\overline{\phi}} 
}
\]
\end{definition}

A standard argument shows that, if a Lie algebroid $A$ has an abelianization $A^\ab$, then it is unique up to isomorphism. 

\begin{lemma}
If $A^\ab\to M$ is the abelianization of $A\to M$, then for any $x\in M$ one has $\Im\rho_x=\Im\rho^\ab_x$ and
\[ [\gg_x,\gg_x]\subset \Ker p_x. \]
\end{lemma}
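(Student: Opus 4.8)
The plan is to deduce both assertions directly from the two structural features of the abelianization morphism $p\colon A\to A^\ab$ recorded in Definition~\ref{def:abelianization:algebrd}: that $p$ is \emph{fiberwise surjective} and that it is a \emph{morphism of Lie algebroids over the identity}. I will use only the elementary content of the latter, namely that the anchors are intertwined, $\rho^\ab\circ p=\rho$, and that $p$ preserves brackets of sections, $p([\alpha,\beta])=[p\alpha,p\beta]$ for all $\alpha,\beta\in\Gamma(A)$; the universal property plays no role in this lemma.

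For the equality of anchor images, from $\rho^\ab\circ p=\rho$ one gets $\Im\rho_x\subseteq\Im\rho^\ab_x$ for every $x\in M$. For the reverse inclusion, surjectivity of $p_x\colon A_x\to A^\ab_x$ shows that any $v\in\Im\rho^\ab_x$ has the form $v=\rho^\ab_x(p_x a)=\rho_x(a)$ for some $a\in A_x$, hence $v\in\Im\rho_x$. This yields $\Im\rho_x=\Im\rho^\ab_x$ with no computation.

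For the inclusion $[\gg_x,\gg_x]\subseteq\Ker p_x$, I would proceed in three steps. First, $\rho^\ab\circ p=\rho$ forces $p_x(\gg_x)\subseteq\gg_x^\ab=\Ker\rho^\ab_x$, so $p$ restricts to a bundle map between isotropy fibers. Second, recall that for $\xi,\eta\in\gg_x$ the isotropy bracket is $[\xi,\eta]_{\gg_x}=[\tilde\xi,\tilde\eta](x)$ for any sections $\tilde\xi,\tilde\eta\in\Gamma(A)$ extending $\xi,\eta$, a value that depends only on $\xi,\eta$ precisely because $\rho_x(\xi)=\rho_x(\eta)=0$ (use the Leibniz rule $[f\tilde\xi,\tilde\eta]=f[\tilde\xi,\tilde\eta]-(\rho(\tilde\eta)f)\tilde\xi$ and its analogue in the second slot); evaluating $p([\tilde\xi,\tilde\eta])=[p\tilde\xi,p\tilde\eta]$ at $x$ then shows $p_x\colon\gg_x\to\gg_x^\ab$ is a homomorphism of isotropy Lie algebras. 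Third, since $A^\ab$ is abelian the bracket on $\gg_x^\ab$ vanishes identically, so $p_x([\xi,\eta]_{\gg_x})=[p_x\xi,p_x\eta]_{\gg_x^\ab}=0$, which is exactly the claim.

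I do not expect a genuine obstacle: the only point needing care is the standard verification that the isotropy bracket is well defined and that a Lie algebroid morphism over the identity restricts to a Lie algebra morphism on isotropy fibers, both of which are routine consequences of the Leibniz identity and the vanishing of $\rho_x$ on $\gg_x$.
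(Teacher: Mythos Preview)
Your proof is correct and follows essentially the same approach as the paper: both deduce $\Im\rho_x=\Im\rho^\ab_x$ from surjectivity of $p$ together with the anchor-intertwining relation, and both prove $[\gg_x,\gg_x]\subset\Ker p_x$ by extending isotropy elements to sections, using that $p$ preserves brackets, and invoking abelianness of $A^\ab$. Your version is somewhat more explicit in justifying that the isotropy bracket is well defined and that $p_x$ restricts to a Lie algebra homomorphism on isotropy, but the underlying argument is the same.
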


\begin{proof}
{The identity $\Im\rho_x=\Im\rho^\ab_x$ follows from the fact that $p:A\to A^\ab$ is a surjective Lie algebroid morphism. On the other hand,} If $a_1,a_2\in\gg_x$, we can choose sections $\a_i\in\Gamma(A)$ such that $\a_i(x)=a_i$. Then $p\circ\a_i\in\Gamma(A^\ab)$ are sections such that $p(\a_i(x))\in\Ker\rho^\ab_x$. Moreover, since $A^\ab$ is abelian and $p:A\to A^\ab$ is a morphism, we find:
\[ p([a_1,a_2])=p([\a_1,\a_2](x))=[p\circ\a_1,p\circ\a_2](x)=[p(\a_1(x)),p(\a_2(x))]=0, \]
so the statement follows.
\end{proof}

\begin{example}
\label{ex:2dim:algbrd}
For a Lie algebra $\gg$, viewed as a Lie algebroid over a singleton, the abelianization is the Lie algebra $\gg^\ab=\gg/[\gg,\gg]$. However, already a bundle of Lie algebras $A\to M$ may have an abelianization whose fiber \emph{is not} $\gg_x/[\gg_x,\gg_x]$. For example, for the bundle of Lie algebras $A=\R\times\R^2\to\R$, with coordinate $x$ on the base and Lie bracket:
\[ [e_1,e_2]=x e_2. \]
$A^\ab\to \R$ is the trivial rank one bundle of Lie algebras and $A^\ab_0 \ne  \gg^\ab_0=\gg_0$.
\end{example}

\begin{example}
\label{ex:3dim:algbrd}
Consider the infinitesimal action of the Lie algebra $\mathfrak{so}(3)$ on $\R^3$ by rotations. The corresponding action Lie algebroid $A=\mathfrak{so}(3)\times\R^3\to \R^3$ does not have a abelianization. {In fact, if we assume that $p:A\to A^\ab$ existed, then at the origin we would have:
\[ \mathfrak{so}(3)=[\mathfrak{so}(3),\mathfrak{so}(3)]\subset \Ker p_0. \]
Then $A^\ab$ would have rank $0$, contradicting $\Im\rho=\Im\rho^\ab$.}
\end{example}

For a transitive Lie algebroid $A\to M$ we will denote by $\gg_M=\Ker\rho$ its isotropy bundle, so we have the short exact sequence of Lie algebroids:
\begin{equation}
\label{transitive:seq}
\xymatrix{0\ar[r] & \gg_M\ar[r] & A\ar[r]^\rho & TM \ar[r]& 0.}
\end{equation}
The bundle of Lie algebras $\gg_M$ has a subalgebroid $[\gg_M,\gg_M]$ consisting of the bundle of Lie algebras with fiber the commutator:
\[ [\gg_M,\gg_M]|_x=[\gg_x,\gg_x]. \]
For these, it is easy to see that that the abelianization always exists. The following proposition is due to Mackenzie (see \cite{Mackenzie87,Mackenzie05}):

\begin{proposition}
\label{prop:abel:algbrd}
Let $A\to M$ be a transitive Lie algebroid. Then its abelianization is $A^\ab=A/[\gg_M,\gg_M]$. In particular,  $\Ker\rho^\ab_x\simeq \gg^\ab_x$, for all $x\in M$.
\end{proposition}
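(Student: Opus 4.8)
The plan is to verify directly that $A^{\ab} := A/[\gg_M,\gg_M]$, equipped with the obvious quotient projection $p\colon A\to A^{\ab}$, is an abelian Lie algebroid satisfying the universal property of Definition~\ref{def:abelianization:algebrd}. First I would check that the construction makes sense: since $A\to M$ is transitive, $\gg_M=\Ker\rho$ is a smooth vector bundle (the anchor has constant rank), and $[\gg_M,\gg_M]$ is a smooth subbundle of $\gg_M$ — here one uses transitivity crucially, because over a single leaf the isotropy Lie algebras $\gg_x$ are all isomorphic (via the flat Bott-type connection, or via parallel transport of the transitive algebroid), so the commutator subalgebras $[\gg_x,\gg_x]$ have locally constant dimension and assemble into a subbundle. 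One also checks that $[\gg_M,\gg_M]$ is an ideal subalgebroid of $A$, i.e. $[\Gamma(A),\Gamma([\gg_M,\gg_M])]\subseteq \Gamma([\gg_M,\gg_M])$; this follows from the Jacobi identity together with the fact that bracketing with a section of $A$ preserves $\Gamma(\gg_M)$ (as $\gg_M$ is an ideal) and acts as a derivation of the fiberwise bracket. Consequently the quotient $A^{\ab}=A/[\gg_M,\gg_M]$ inherits a Lie algebroid structure over $M$, with anchor induced by $\rho$ (well-defined since $[\gg_M,\gg_M]\subseteq\Ker\rho$) and bracket induced from $A$; its isotropy at $x$ is $\gg_x/[\gg_x,\gg_x]=\gg_x^{\ab}$, which is abelian, so $A^{\ab}$ is an abelian Lie algebroid, and $\Ker\rho_x^{\ab}\simeq\gg_x^{\ab}$ as claimed in the last sentence.

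Next I would establish the universal property. Let $B\to N$ be an abelian Lie algebroid and $\phi\colon A\to B$ a Lie algebroid morphism, covering some smooth map $f\colon M\to N$. The content is that $\phi$ kills $[\gg_M,\gg_M]$: for $x\in M$, the morphism $\phi$ restricts to a Lie algebra homomorphism $\gg_x\to \gg_{f(x)}^{B}$ (it maps $\Ker\rho_x$ into $\Ker\rho^B_{f(x)}$ because it intertwines the anchors), and since $\gg^B_{f(x)}$ is abelian this homomorphism annihilates $[\gg_x,\gg_x]$. Thus $\phi|_{[\gg_M,\gg_M]}=0$, and $\phi$ factors set-theoretically — in fact as a bundle map — through $p$ to give $\overline{\phi}\colon A^{\ab}\to B$. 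Uniqueness of $\overline\phi$ is immediate from surjectivity of $p$. It remains to check $\overline\phi$ is a Lie algebroid morphism: this is a diagram chase. The anchor compatibility $\rho^B\circ\overline\phi=\d f\circ\rho^{\ab}$ follows by composing with $p$ and using the corresponding identity for $\phi$ together with surjectivity of $p$; the bracket compatibility is verified on sections, again reducing modulo $[\gg_M,\gg_M]$ to the bracket compatibility for $\phi$ (one should phrase this in the appropriate way since $\phi$ need not be base-preserving — working with the pullback formulation of algebroid morphisms, or noting that locally one can choose sections that are $\phi$-related).

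The main obstacle, and the only point requiring genuine care rather than bookkeeping, is the smoothness of the subbundle $[\gg_M,\gg_M]\subseteq\gg_M$ and hence of the quotient: a priori the fiberwise commutators could jump in dimension, as Example~\ref{ex:2dim:algbrd} shows for a non-transitive bundle of Lie algebras. Transitivity is exactly what rules this out, since it provides (via a splitting of \eqref{transitive:seq}, i.e. a connection on the transitive algebroid) a way to locally trivialize $\gg_M$ compatibly with the bracket up to the curvature terms, forcing $\dim[\gg_x,\gg_x]$ to be locally constant along the connected base $M$. I would spell this out by fixing a local frame of $\gg_M$ adapted to a point $x_0$ and using continuity of the structure functions of the bracket to conclude the rank of the commutator is lower semicontinuous, then upgrading to local constancy via parallel transport identifying neighboring isotropy algebras. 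Once the subbundle claim is in hand, the rest of the argument is the routine verification sketched above, and I would cite Mackenzie for the original treatment while including enough detail to make the paper self-contained.
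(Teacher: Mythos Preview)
Your proposal is correct and follows essentially the same approach as the paper: recognize $[\gg_M,\gg_M]$ as an ideal (what Mackenzie calls an ideal system), form the quotient Lie algebroid, and verify the universal property by noting that any morphism into an abelian algebroid kills commutators of isotropy. The paper is terser---it cites Mackenzie for the ideal-system machinery rather than spelling out the smoothness of $[\gg_M,\gg_M]$---and it also records an alternative explicit construction via a splitting $\sigma:TM\to A$, writing $A^\ab\simeq TM\oplus\gg_M^\ab$ with the bracket given by the induced connection and curvature; you may wish to mention this as a concrete model, but your argument already covers the proof.
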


\begin{proof}
The bundle of Lie algebras $[\gg_M,\gg_M]$ forms an ideal system over the identity in $A\to M$, according to the terminology of \cite[Section 4.4]{Mackenzie05}. Hence, the quotient $A/[\gg_M,\gg_M]$ has a Lie algebroid structure such that the projection $p:A\to A/[\gg_M,\gg_M]$ is a surjective morphism. It is easy to see that it satisfies the universal property, so that $A^\ab=A/[\gg_M,\gg_M]$.

An alternative approach is to observe that choosing a splitting $\sigma:TM\to A$ we obtain a Lie algebroid isomorphism:
\[ A\simeq TM\oplus\gg_M,\]
where the anchor becomes the projection in $TM$ and the Lie bracket is given by:
\[ [(X,\xi),(Y,\eta)]_A=([X,Y],[\xi,\eta]_{\gg_M}+\nabla^\sigma_X\eta-\nabla^\sigma_Y\xi+\Omega^{\sigma}(X,Y)). \]
Here $\Omega^{\sigma}\in\Omega^2(M,\gg_M)$ denotes the curvature of the splitting defined by:
\[ \Omega^{\sigma}(X,Y)=[\sigma(X),\sigma(Y)]-\sigma([X,Y]), \]
and $\nabla^\sigma$ denotes the connection on $\gg_M$ given by $\nabla^\sigma_X\a:=[\sigma(X),\a]$.

Denoting by $\gg^\ab_M$ the bundle of abelian Lie algebras $\gg_M/[\gg_M,\gg_M]$, we can build a new algebroid $A^\ab\to M$ by setting:
\[ A^\ab:= TM\oplus\gg^\ab_M,\]
with anchor the projection in $TM$ and Lie bracket given by:
\[ [(X,\xi),(Y,\eta)]_A^\ab:=([X,Y],[\xi,\eta]_{\gg^\ab_M}+\nabla^{\sigma^\ab}_X\eta-\nabla^{\sigma^\ab}_Y\xi+\Omega^{\sigma^\ab}(X,Y)), \]
where $\Omega^{\sigma^\ab}=p\circ \Omega^{\sigma}\in\Omega^2(M,\gg^\ab_M)$ and $\nabla^{\sigma^\ab}$ is a connection on $\gg^\ab_M$ induced from $\nabla^\sigma$ under the projection
$p:\gg_M\to\gg^\ab_M$. It is easy to check that $A^\ab$ satisfies the universal property.
\end{proof}

\begin{remark}
{
We have assume in Definition \ref{def:abelianization:algebrd} that the morphism $p:A\to A^\ab$ is surjective. We conjecture that this is actually a consequence of the universal property but we were not able to prove it. Note that we have used surjectivity both in the proof of the lemma above and in the examples to determine the abelianization.
}
\end{remark}


\subsection{Lie groupoids}

Let $\cG\tto M$ be a Lie groupoid with source $\s$ and target $\t$.  Recall that we say that $\cG$ is {\bf abelian} if the isotropy Lie groups $\cG_x=\s^{-1}(x)\cap \t^{-1}(x)$ are abelian for all $x\in M$. 

In the next definition the terms ``groupoid" and ``groupoid morphism" can be interpreted in either the category of sets, topological spaces, or smooth manifolds.

\begin{definition}
Given a groupoid $\cG\tto M$ its {\bf abelianization} is an abelian groupoid $\cG^\ab\to M$ together with a {surjective groupoid morphism $p:\cG\to \cG^\ab$ covering the identity} such that: for any \emph{abelian} groupoid $\cH\tto N$ and any groupoid morphism $\Phi:\cG\to \cH$, there is a unique groupoid morphism $\overline{\Phi}:\cG^\ab\to \cH$ that makes the following diagram commute:
\[
\xymatrix{
\cG\ar[d]_p\ar[r]^{\Phi} & \cH \\
\cG^\ab \ar@{-->}[ru]_{\overline{\Phi}} 
}
\]
\end{definition}

If a groupoid $\cG$ has an abelianization $\cG^\ab$, then it is unique up to isomorphism. 

\begin{example}
If $\cG\tto M$ is a groupoid in sets, then its abelianization always exists: 
\[ \cG^\ab=\cG/(G_M,G_M),\] 
where $G_M=\bigcup_{x\in M} \cG_x$ is the bundle of isotropies and $(G_M,G_M)$ is the bundle formed by their commutators. 
\end{example}

\begin{example}
If $\cG\tto M$ is a topological groupoid, then its abelianization is: 
\[ \cG^\ab=\cG/\overline{(G_M,G_M)},\] 
where $\overline{(G_M,G_M)}$ denotes the closure of the commutator bundle.
\end{example}

\begin{example}
\label{ex:Lie:grp}
If $G$ is a Lie group, then its abelianization always exists: 
\[ G^\ab=G/\overline{(G,G)},\] 
where $\overline{(G,G)}$ denotes the closure of the commutator subgroup. For a connected Lie group $G$ the commutator $(G,G)$ coincides with the connected Lie subgroup integrating the commutator Lie subalgebra $[\gg,\gg]\subset\gg$ (see \cite[$\S$XII]{Chevalley}). If $G$ is 1-connected, then $(G,G)$ is the kernel of the Lie group morphism integrating the projection $\gg\to \gg/[\gg,\gg]$, hence is a closed subgroup. 

However, in general, $(G,G)\subsetneq \overline{(G,G)}$ as shown by the following example (see \cite[Chp.~3, $\S 9$, Exer.~9]{Bourbaki}): 
\[ G=(\widetilde{\mathrm{SL}}(2,\R)\times \Ss^1)/\Z, \]
where $\Z$ is embedded diagonally as the normal subgroup of the center of the universal cover $\widetilde{\mathrm{SL}}(2,\R)$ whose quotient is ${\mathrm{SL}}(2,\R)$ and as a non-discrete subgroup of $\Ss^1$ (for example, one can take $\{e^{\pi n\sqrt{2} i}:n\in \Z\}\subset \Ss^1$).
\end{example}

\begin{example}
\label{ex:fund:grpd}
Let $\Pi_1(M)\tto M$ be the fundamental groupoid of a connected manifold $M$. If $\widetilde{M}$ denotes the universal covering space, this groupoid can be identified with the quotient of the pair groupoid $\widetilde{M}\times\widetilde{M}$ by the action of the fundamental group $\pi_1(M)$:
\[ \Pi_1(M)= (\widetilde{M}\times\widetilde{M})/\pi_1(M). \]
The isotropy groups are isomorphic to $\pi_1(M)$, so this groupoid is non-abelian, in general. To describe its abelianization, let 
$\overline{M}=\widetilde{M}/(\pi_1(M),\pi_1(M))$, 
be the cover of $M$ with covering group $H_1(M,\Z)=\pi_1(M)^\ab$. Then:
\[ \Pi_1(M)^\ab= (\overline{M}\times\overline{M})/H_1(M,\Z). \]
\end{example}

\begin{example}
Consider the bundle of Lie groups $\cG=(0,+\infty)\times \R^2\tto \R$ integrating the Lie algebroid from Example \ref{ex:2dim:algbrd}:
\begin{itemize}
\item source and target maps: $\s(a,b,x)=x=\t(a,b,x)$;
\item multiplication: $(a_1,b_1,x)\cdot (a_2,b_2,x)=(a_1a_2,a_1^xb_2+b_1,x)$. 
\end{itemize}
Its abelianization is the bundle of Lie groups $\cG=(0,+\infty)\times\R\tto\R$ where:
\begin{itemize}
\item source and target maps: $\s(a,x)=x=\t(a,x)$;
\item multiplication: $(a_1,x)\cdot (a_2,x)=(a_1a_2,x)$. 
\end{itemize}
At $x=0$, we have that $\cG^\ab_0\not=(\cG_0)^\ab.$
\end{example}

\begin{example}
\label{ex:SO3:action}
Let $\cG=\mathrm{SO}(3,\R)\times\R^3\tto \R^3$ be the action groupoid associated with the usual action of $\mathrm{SO}(3,\R)$ on $\R^3$ by rotations. Then a reasoning similar to Example \ref{ex:3dim:algbrd} shows that $\cG^\ab$ does not exist (in the category of Lie groupoids). 
\end{example}

These examples show that, if $\cG$ is a Lie groupoid with Lie algebroid $A$, then:
\begin{itemize}
\item the abelianization $\cG^\ab$ may fail to exist (Example \ref{ex:SO3:action});
\item if $\cG^\ab$ exists, then its Lie algebroid, in general, is not isomorphic to the abelianization $A^\ab$ (Example \ref{ex:Lie:grp}).
\item even if $\cG$ is source 1-connected, $\cG^\ab$ may be different from the source 1-connected integration $\cG(A^\ab)$ (Example \ref{ex:fund:grpd}).
\end{itemize}

Let us focus now on the case of a transitive Lie groupoid $\cG\tto M$. Then the isotropy groups form a bundle of Lie groups $G_M\to M$, which is a closed normal Lie subgroupoid of $\cG$, and we have a short exact sequence of Lie groupoids:
\begin{equation}
\label{transitive:seq:grpd}
\xymatrix{1\ar[r] & G_M\ar[r] & \cG \ar[r]^--{(\s,\t)} & M\times M \ar[r]& 0.}
\end{equation}
The bundle of Lie groups $G_M$ has the normal Lie subgroupoid $(G_M,G_M)$, a bundle of Lie groups with fibers the commutators $(\cG_x,\cG_x)$. The following result shows that for a transitive Lie groupoid the abelianization always exists.

\begin{proposition}
\label{prop:abel:grpd}
Let $\cG\tto M$ be transitive Lie groupoid with algebroid $A\to M$. The closure $\overline{(G_M,G_M)}$ is a normal Lie subgroupoid of $\cG$ and one has:
\[ \cG^\ab=\cG/\overline{(G_M,G_M)}. \]
Moreover, the following are equivalent:
\begin{enumerate}[(a)]
\item $(G_M,G_M)\subset\cG$ is a closed subgroupoid;
\item for all $x\in M$, $(\cG_x,\cG_x)$ is a closed subgroup of $\cG_x$;
\item for some $x\in M$, $(\cG_x,\cG_x)$ is a closed subgroup of $\cG_x$;
\end{enumerate}
If any of these hold and $\cG_x$ is connected, then $\cG^\ab$ has Lie algebroid $A^\ab$.
\end{proposition}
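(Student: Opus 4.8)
\emph{Proof proposal.} The plan is to reduce everything to the standard description of a transitive Lie groupoid as a gauge groupoid, which is exactly where transitivity is used. Fix $x_0\in M$, set $G:=\cG_{x_0}$ and let $P:=\s^{-1}(x_0)$, a principal $G$-bundle over $M$ via $\t:P\to M$, so that $\cG\simeq (P\times P)/G$ as Lie groupoids. Under this identification the isotropy bundle is the associated bundle $G_M\simeq P\times_G G$, with $G$ acting on the fibre $G$ by conjugation; hence $(G_M,G_M)\simeq P\times_G (G,G)$, and since $\overline{(G,G)}$ is a closed normal (indeed characteristic) subgroup of $G$, also $\overline{(G_M,G_M)}\simeq P\times_G\overline{(G,G)}$. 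This exhibits $\overline{(G_M,G_M)}$ as an embedded, closed sub-bundle of Lie groups of $G_M$, which is itself closed in $\cG$ since $G_M=(\s,\t)^{-1}(\Delta_M)$. Normality of $\overline{(G_M,G_M)}$ in $\cG$ follows because conjugation by any arrow $g:x\to y$ is a Lie group isomorphism $\cG_x\to\cG_y$ carrying $(\cG_x,\cG_x)$ onto $(\cG_y,\cG_y)$, hence, being a homeomorphism, carrying $\overline{(\cG_x,\cG_x)}$ onto $\overline{(\cG_y,\cG_y)}$.

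Next I would form the quotient. Because $\overline{(G,G)}\triangleleft G$ is closed and normal, $\bar P:=P/\overline{(G,G)}$ is a principal $G/\overline{(G,G)}$-bundle over $M$, and a direct check identifies $\cG/\overline{(G_M,G_M)}$ with its gauge groupoid $(\bar P\times\bar P)/(G/\overline{(G,G)})$; in particular it is a Lie groupoid, it is abelian (its isotropy at $x$ is $\cG_x/\overline{(\cG_x,\cG_x)}$), and the projection $p:\cG\to\cG/\overline{(G_M,G_M)}$ is a surjective submersion covering the identity. For the universal property, let $\Phi:\cG\to\cH$ be a morphism to an abelian Lie groupoid with base map $\phi_0$. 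Each restriction $\Phi_x:\cG_x\to\cH_{\phi_0(x)}$ has image in an abelian group, so $(\cG_x,\cG_x)\subset\Ker\Phi_x$, and since $\Ker\Phi_x$ is closed this forces $\overline{(\cG_x,\cG_x)}\subset\Ker\Phi_x$, i.e. $\overline{(G_M,G_M)}\subset\Ker\Phi$. Thus $\Phi$ factors set-theoretically through $p$, the factorization is smooth because $p$ is a surjective submersion, and it is unique because $p$ is surjective. Hence $\cG^\ab=\cG/\overline{(G_M,G_M)}$.

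For the equivalences: (a)$\Rightarrow$(b), because intersecting the closed subgroupoid $(G_M,G_M)$ with $\cG_x$ gives $(\cG_x,\cG_x)$; (b)$\Rightarrow$(c) is trivial; (c)$\Rightarrow$(a), because in the gauge picture $(\cG_{x_0},\cG_{x_0})=(G,G)$ closed in $G$ gives $\overline{(G,G)}=(G,G)$, whence $(G_M,G_M)=P\times_G(G,G)=\overline{(G_M,G_M)}$ is closed in $\cG$. (Transitivity makes all isotropy groups isomorphic, so (b) and (c) are also directly equivalent.) Finally, assume these hold and $\cG_x$ is connected. Then $\cG^\ab=\cG/(G_M,G_M)$ and $p$ is a surjective submersion, so $\mathrm{Lie}(\cG^\ab)=A/\mathrm{Lie}((G_M,G_M))$. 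Since $G=\cG_{x_0}$ is connected, $(G,G)$ is the connected Lie subgroup integrating $[\gg,\gg]$ (see Example \ref{ex:Lie:grp}), so $\mathrm{Lie}((G_M,G_M))=P\times_G[\gg,\gg]=[\gg_M,\gg_M]$ (well defined and of constant rank since $[\gg,\gg]$ is an ideal). Therefore $\mathrm{Lie}(\cG^\ab)=A/[\gg_M,\gg_M]=A^\ab$ by Proposition \ref{prop:abel:algbrd}.

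The main obstacle I expect is the verification that $\cG/\overline{(G_M,G_M)}$ really is a Lie groupoid with $p$ a submersion: that the naive set-theoretic quotient of a transitive Lie groupoid by a wide, closed, normal bundle of Lie groups carries a compatible smooth structure. This is precisely the point where transitivity is indispensable, and I would treat it through the explicit model $(\bar P\times\bar P)/(G/\overline{(G,G)})$ rather than a general quotient theorem; everything else — normality, the universal property, the equivalences, and the Lie algebroid computation — is then essentially formal.
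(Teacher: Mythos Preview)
Your proof is correct and follows essentially the same approach as the paper: reduce to the gauge groupoid $(P\times P)/G$, identify $(G_M,G_M)$ and its closure as associated bundles $P\times_G(G,G)$ and $P\times_G\overline{(G,G)}$, and read off the equivalences and the Lie algebroid computation from this description. The only minor difference is that the paper cites Mackenzie's general quotient theorem for the smooth structure on $\cG/\overline{(G_M,G_M)}$, whereas you build it directly as the gauge groupoid of $\bar P=P/\overline{(G,G)}$; your explicit model is a perfectly good (and arguably more transparent) alternative.
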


\begin{proof}
Given a transitive Lie groupoid $\cG\tto M$, fix $x\in M$. Then $\t:\s^{-1}(x)\to M$ is a principal $\cG_x$-bundle, and we have an isomorphism between $\cG\tto M$ and the gauge groupoid:
\[ (\s^{-1}(x)\times \s^{-1}(x))/\cG_x\tto M. \]
Under this isomorphism, the isotropy bundle becomes isomorphic to the associated bundle:
\[ G_M\simeq (\s^{-1}(x)\times \cG_x)/\cG_x, \]
where $\cG_x$ acts on itself by conjugation. Then one sees immediately that the commutator bundle and its closure are isomorphic to the associated bundles:
\[ (G_M,G_M)\simeq (\s^{-1}(x)\times (\cG_x,\cG_x))/\cG_x,\quad \overline{(G_M,G_M)}\simeq (\s^{-1}(x)\times \overline{(\cG_x,\cG_x)})/\cG_x. \]
This proves that $\overline{(G_M,G_M)}$ is a closed, normal, Lie subgroupoid of $\cG$ and so the quotient $\cG/\overline{(G_M,G_M)}$ is a Lie groupoid (see \cite[Section 2.4]{Mackenzie05}). One checks easily that it satisfies the universal property. Also, the equivalences between (a)-(c) follows from the description of the commutator and its closure as associated bundles, and Proposition \ref{prop:abel:algbrd}. When $\cG_x$ is connected, the commutator $(\cG_x,\cG_x)$ coincides with the connected Lie subgroup integrating the commutator Lie subalgebra $[\gg,\gg]\subset\gg$ (see \cite[$\S$XII]{Chevalley}), and it follows that the Lie algebroid of $\cG^\ab$ is isomorphic to $A^\ab$.
\end{proof}

In general, even when $\cG$ is source 1-connected and $(G_M,G_M)\subset\cG$ is closed, the Lie algebroid of $\cG^\ab$ may not be isomorphic to $A^\ab$. 

\begin{example}
\label{ex:gauge:groupoid}
Consider the gauge groupoid associated with the  $\mathrm{O}(2)$-principal bundle $\Ss^3\to \mathbb{RP}^2$:
\[ \cG=(\Ss^3\times \Ss^3)/\mathrm{O}(2)\tto \mathbb{RP}^2. \] 
$\cG$ is a source 1-connected groupoid with isotropy groups isomorphic to $\mathrm{O}(2)$. The associated Lie algebroid $A\to \mathbb{RP}^2$ is abelian, so we have $A=A^\ab$ and $\cG=\cG(A)$.

On the other hand, the commutator group $(\mathrm{O}(2),\mathrm{O}(2))=\mathrm{SO}(2)$ is closed in $\mathrm{O}(2)$, so that $(G_M,G_M)\subset\cG$ is closed. We conclude also that the abelianization $\cG^\ab=\cG/(G_M,G_M)$ has discrete isotropy isomorphic to $\Z_2=\mathrm{O}(2)/\mathrm{SO}(2)$, so its Lie algebroid is isomorphic to $T(\mathbb{RP}^2)$. In fact, it is easy to see that $\cG^\ab=\Pi_1(\mathbb{RP}^2)$.
\end{example}


\section{Genus integration}

Let $A\to M$ be a Lie algebroid. In this section we construct its genus integration which is a quotient:
\[ \cG_g(A):=P(A)/\genus, \]
where $\genus$ denotes homology of $A$-paths, which we will now define.

Let us first recall the notions of $A$-paths and $A$-homotopies from \cite{CF1}. By an {\bf $A$-path} we will mean a smooth curve $a:I\to A$ satisfying
\[ \rho(a(t))=\frac{\d }{\d t} \gamma(t), \]
where $\gamma:I\to M$ is the base path of $a$. This condition is equivalent to require that
\[
\xymatrix@R=15pt{
TI\ar[d]\ar[r]^{a\d t} & A\ar[d] \\
I\ar[r]_{\gamma} & M
}
\]
is a Lie algebroid morphism. 

Let $a_0,a_1:I\to A$ be $A$-paths with the same end points: $\gamma_0(i)=\gamma_1(i)=x_i$, $i=0,1$. An {\bf $A$-homotopy} between $a_0$ and $a_1$ is a Lie algebroid morphism 
\[
\xymatrix@C=80pt{
T(I\times I)\ar[d]\ar[r]^{a(t,\eps)\d t+b(t,\eps)\d \eps} & A\ar[d] \\
I\times I\ar[r]_{\gamma(t,\eps)} & M
}
\]
satisfying the boundary conditions:
\begin{equation}
\label{eq:bd:conditions}
\gamma(i,\eps)=x_i, \quad a(t,i)=a_i(t),\quad b(i,\eps)=0,\quad (i=0,1)
\end{equation}

\begin{figure}[h]
    \centering
    \includegraphics[height=5cm]{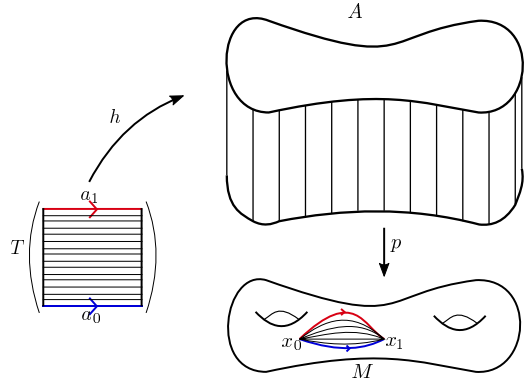}
    \caption{Homotopic $A$-paths}
    \label{AHom}
\end{figure}

Using an auxiliary connection, the condition that $a\d t+b\d \eps$ is a Lie algebroid morphism can be written as $a$ and $b$ satisfying the pde (see \cite{CF1,Mackenzie05} for details):
\begin{equation}\label{diffeq}
\partial_{t}b- \partial_{\epsilon} a= T_{\nabla}(a, b).
\end{equation}

 
{
\begin{remark}
\label{rem:reparemeterization:A:paths}
Homotopy defines an equivalence relation among $A$-paths. Reparameterization of $A$-paths gives rise to equivalent $A$-paths (see \cite{CF1}), so every $A$-path is equivalent to one which has all the derivatives vanishing at the end-points. From now on we will assume that all $A$-paths vanish to all orders at the end-points. In particular, this applies to the $A$-paths in an $A$-homotopy: $t\mapsto a(t,\eps)$ and $\eps\mapsto b(t,\eps)$.
\end{remark}
}

It follows that there is well-defined operation of concatenation of $A$-paths $a_1\cdot a_0$ inducing a partially defined product on homotopy equivalence classes:
\[  [a_1]\cdot [a_2]:=[a_1\cdot a_2]. \]
If we let $\s,\t:P(A)\to M$ be defined by $\s(a)=\gamma(0)$ and $\t(a)=\gamma(1)$, and we set
$\mathbf{u}:M\to P(A)$, $x\mapsto 0_x(t)$, and $\mathbf{i}:P(A)\to P(A)$, $a(t) \mapsto -a(1-t)$, the quotient:
\[ \cG(A):=P(A)/\sim, \]
has a groupoid structure over $M$. Moreover, this is a topological groupoid for the quotient topology induced by the $C^1$-topology on $P(A)$ (\cite[Thm 2.1]{CF1}).
\medskip 

For the definition of $A$-homology we will replace the square $I\times I$ by a \emph{square with genus}, so let us clarify what we mean by this. We start with the square $I\times I$ and we remove some open disk from its interior $D\subset \textrm{int}(I\times I)$. Then we take some compact surface $\Sigma'$ with genus $n$ and with one single boundary component $\partial \Sigma'=\partial D$ and we form the connected sum:
\[ \Sigma=\Sigma'\sqcup_{\partial D} (I\times I)\backslash D. \]
We call $\Sigma$ a \textbf{square with genus}. Its boundary $\partial \Sigma$ has a neighborhood $U$ diffeomorphic to a neighborhood of the boundary of $I\times I$, and henceforth we will use this identification with no further notice, calling $U$ a ``collar neighborhood'' of $\partial \Sigma$. 

\begin{definition}
\label{def:homology}
Let $a_0,a_1:I\to A$ be $A$-paths with the same end points. An {\bf $A$-homology} between $a_0$ and $a_1$ is a Lie algebroid map $h:T\Sigma\to A$, where 
$\Sigma$ is a square with genus, such that in a collar neighborhood $U$ of $\partial \Sigma$:
\[ h|_U=a(t,\eps)\d t+b(t,\eps)\d \eps \] 
satisfy the boundary conditions \eqref{eq:bd:conditions}.
\end{definition}

\begin{figure}[h]
    \centering
    \includegraphics[height=5cm]{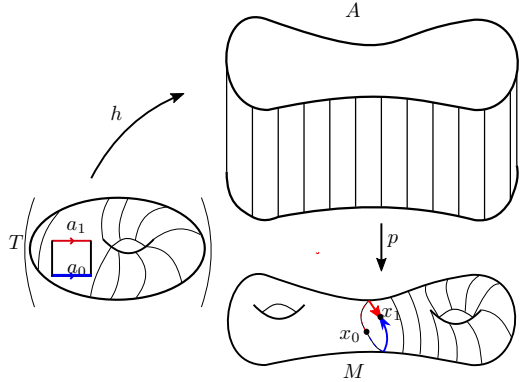}
    \caption{Homologous $A$-paths}
    \label{Hom}
\end{figure}


\begin{remark}
{ 
We have set up the definition of $A$-homology so that possible difficulties with smoothness have already been solved by the usual theory of $A$-homotopies (cf.~\cite{CF1}), since the boundary conditions of a square with genus supporting an $A$-homology are the same as the boundary conditions of an ordinary square supporting an $A$-homotopy. In particular, note that:
\begin{enumerate}[(i)]
\item if we have an $A$-homology $h:T\Sigma\to A$ between $a_1$ and $a_2$, where $\Sigma$ is a square with genus $g$, and an $A$-homology $h':T\Sigma'\to A$ between $a_2$ and $a_3$, where $\Sigma'$ is a square with genus $g'$, then we can glue vertically the two squares  along the side corresponding to $a_2$ to a square with genus $g+g'$ supporting an $A$-homology $T(\Sigma\#\Sigma')\to A$ between $a_1$ and $a_3$;

\item if we have an $A$-homology $h:T\Sigma\to A$ between $a_1$ and $a_2$, where $\Sigma$ is a square with genus $g$, and an $A$-homology $h':T\Sigma'\to A$ between $a_1'$ and $a_2'$, where $\Sigma'$ is a square with genus $g'$, and the compositions $a_1\circ a_1'$ and $a_2\circ a_2'$ are defined, then we can glue horizontally the two squares with genus $\Sigma\#\Sigma'$ to get an $A$-homology between $a_1\circ a_1'$ and $a_2\circ a_2'$.
\end{enumerate}
Also, since all our $A$-paths are assumed to vanish at the end-points { (see Remark \ref{rem:reparemeterization:A:paths}),} in all definitions above the square $I\times I$ can be assumed to have smooth boundary, i.e., it can be assumed to be a closed disk with boundary a circle with 4 marked points.}

\begin{figure}[h]
    \centering
    \includegraphics[height=4.5cm]{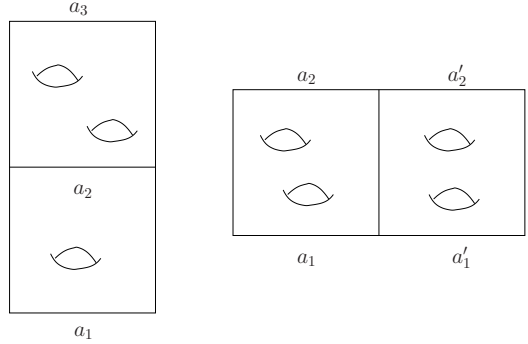}
    \caption{Composition of $A$-homologies}
    \label{squares}
\end{figure}
\end{remark}

We will write $a_0\genus a_1$ if there exists an $A$-homology between the $A$-paths $a_0$ and $a_1$ and we called them {\bf homologous $A$-paths}. It should be clear from the definition that:
\begin{itemize}
\item If $a_0\sim a_1$ then $a_0\genus a_1$;
\item If $a_0\genus a'_0$ and $a_1\genus a'_1$, then $a_0\cdot a_1\genus a'_0\cdot a'_1$;
\item If $a_0,a_1$ belong to the same leaf $L$, then $a_0\genus a_1$ in $A$ iff $a_0\genus a_1$ in $A|_L$.
\end{itemize}
Therefore the space of equivalence classes of homologous $A$-paths (where the genus is not fixed) also has a groupoid structure:

\begin{definition}
The {\bf genus integration} of a Lie algebroid $A\to M$ is the quotient groupoid:
\[ \cG_g(A):=P(A)/\genus. \]
\end{definition}

Notice that there is an obvious surjective groupoid morphism 
\[ p:\cG(A)\to\cG_g(A),\quad [a]\mapsto [a]_g. \] 
One can think of $\cG(A)$ as the first homotopy group(oid) of the generalized space represented by $A$. We will now see that, similarly, one should think of $\cG_g(A)$ as the first homology group(oid) of the generalized space represented by $A$.
\medskip

Recall that the homology of a space is an abelian group. A very similar idea leads to the following:

\begin{proposition}
The genus integration $\cG_g(A)\tto M$ is an abelian groupoid.
\end{proposition}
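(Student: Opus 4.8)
The plan is to show that the isotropy groups of $\cG_g(A)$ are abelian by proving that any two $A$-loops based at the same point $x\in M$ commute up to $A$-homology. Since $A$-homology only depends on $A$ restricted to the leaf $L$ through $x$ (the third bullet before the definition), we may assume $A$ is transitive over $L$. Fix $x\in L$ and let $a,b$ be two $A$-loops at $x$, i.e.\ $A$-paths with $\gamma_a(0)=\gamma_a(1)=\gamma_b(0)=\gamma_b(1)=x$. I want to exhibit an $A$-homology between the concatenations $a\cdot b$ and $b\cdot a$.

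The key geometric input is that the two maps $I\times I\to I\times I$ (or rather, the two words $a\cdot b$ and $b\cdot a$ read along the boundary of a square) differ by the standard commutator move, which can be filled in by a map from a \emph{square with one handle}. Concretely, consider the once-punctured torus $T_{1,1}$, whose boundary is a single circle along which the attaching word is the commutator $[\alpha,\beta]=\alpha\beta\alpha^{-1}\beta^{-1}$ of the two core loops $\alpha,\beta$. First I would construct an $A$-morphism $T_{1,1}\to A$ whose core loops are $b$ and $a$: because $A|_L$ is transitive, any loop in $L$ lifts to an $A$-loop, and more relevantly, the $A$-paths $a$ and $b$ give a map from a wedge of two circles into (the generalized space of) $A$; I would realize this on a collar of the one-skeleton of $T_{1,1}$ and then extend over the 2-cell. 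The cleanest way to get the extension is to invoke the $A$-homotopy machinery of \cite{CF1}: the boundary word of the 2-cell, read as an $A$-path, is $b\cdot a\cdot \bar b\cdot\bar a$, and one checks this particular concatenation is $A$-homotopic to the constant path at $x$ (indeed $c\cdot\bar c\sim 0_x$ for any loop $c$, and one reassociates), so by the usual filling result for $A$-homotopies the 2-cell admits an $A$-morphism with the prescribed boundary. Gluing this filled 2-cell to the collar of the one-skeleton produces an $A$-morphism $h_0:TT_{1,1}\to A$.

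Next I would assemble the actual $A$-homology. Take a square with genus one, $\Sigma$, decomposed as the connected sum of $I\times I$ (carrying the ``straight'' comparison) with $T_{1,1}$. On the square part I put an $A$-homotopy between $a\cdot b$ and the boundary loop of the torus part composed appropriately; on the $T_{1,1}$ part I put $h_0$. The two pieces agree along the gluing circle by construction, so they assemble (using the vertical/horizontal gluing of $A$-homologies recorded in the Remark after Definition~\ref{def:homology}) into a single $A$-morphism $h:T\Sigma\to A$ satisfying the boundary conditions \eqref{eq:bd:conditions} with $a_0=a\cdot b$ and $a_1=b\cdot a$. This exhibits $[a\cdot b]_g=[b\cdot a]_g$, i.e.\ $[a]_g[b]_g=[b]_g[a]_g$ in the isotropy group at $x$, and since $x$ was arbitrary, $\cG_g(A)$ is abelian.

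The main obstacle, and the step that needs the most care, is the bookkeeping of the gluing: matching up the collar neighborhoods, the marked points, and the boundary conditions \eqref{eq:bd:conditions} across the connected-sum circle so that the local pieces genuinely patch to a smooth $A$-morphism on all of $T\Sigma$. The Remark following Definition~\ref{def:homology} was set up precisely to handle this — it reduces the smoothness issues at the seams to the already-solved theory of $A$-homotopies — so the argument should go through, but writing the commutator word as the attaching map of the handle and tracking orientations is where errors are easy to make. An alternative, possibly slicker, route would be to avoid loops entirely and argue directly that for any two composable $A$-paths $a,b$ with $a\cdot b$ and $b\cdot a$ both defined, the ``figure-eight'' configuration bounds a genus-one square; but this still reduces to the same handle-attaching computation.
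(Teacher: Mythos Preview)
There is a genuine and fatal gap in your argument. You claim that the commutator $b\cdot a\cdot \bar b\cdot\bar a$ is $A$-\emph{homotopic} to the constant path $0_x$, justifying this by ``$c\cdot\bar c\sim 0_x$ and one reassociates.'' But this is false in general: no amount of reassociation turns $bab^{-1}a^{-1}$ into something of the form $c\bar c$ without already assuming that $a$ and $b$ commute, which is circular. Indeed, if the commutator of two $A$-loops were always $A$-homotopic to $0_x$, then the Weinstein groupoid $\cG(A)$ itself would be abelian---contradicted already by the case $A=\gg$ a non-abelian Lie algebra, where $\cG(\gg)$ is the simply-connected (non-abelian) Lie group integrating $\gg$. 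The entire content of the proposition is that commutators become trivial only when one allows \emph{genus}, i.e.\ under $A$-homology, not under $A$-homotopy. Your ``filling of the 2-cell'' would, if it worked, prove the stronger false statement.

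The topological point you are missing is that the once-punctured torus $T_{1,1}$ has \emph{no} 2-cell to fill relative to its core loops: it deformation retracts onto the wedge $S^1\vee S^1$. This is exactly what the paper exploits. The paper writes the commutator $a=(a_1,a_2)=a_1a_2a_1^{-1}a_2^{-1}$ on the four sides of $\partial(I\times I)$, chooses a smooth retraction $\phi:(I\times I)\setminus D\to\partial(I\times I)$ of the punctured square onto that boundary, and sets $h':=a\circ\d\phi$. This is automatically a Lie algebroid morphism because it factors through the $1$-dimensional boundary---no null-homotopy hypothesis is needed. Gluing the opposite sides of the outer square (with $\phi$ chosen compatibly) turns $\Sigma'$ into a square with genus one and $h'$ into the desired $A$-homology between the commutator and $0_x$. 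In short: the correct mechanism is \emph{retraction onto the $1$-skeleton}, not \emph{filling a $2$-cell}. (Your reduction to the transitive case and the extra assembly step on the square part are also unnecessary once this is done.)
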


\begin{proof}
Let $a_1,a_2:I\to A$ be two $A$-loops based at $x_0\in M$. All we need to show is that $(a_1\cdot a_2)\genus (a_2\cdot a_1)$ or, equivalently, that the commutator
\[ a:T(\partial (I\times I))\to A,\quad a:=(a_1,a_2)=a_1\cdot a_2\cdot a_1^{-1} \cdot a_2^{-1}, \]
is $A$-homologous to the trivial path $0_x$. 

For that, let $\Sigma'$ be obtained from the square $I\times I$ by removing some open disk from its interior $D\subset \textrm{int}(I\times I)$:
\[ \Sigma'=(I\times I) \backslash D. \]
Also, let $\phi:\Sigma'\to \partial (I\times I)$ be a retraction of {$\Sigma'$} to the boundary $\partial (I\times I)$. { Here we can assume that the square $I\times I$ is a disk with 4 marked points in the boundary, so that such a smooth retraction exists}. We obtain a Lie algebroid morphism:
\[ h':T\Sigma'\to A,\quad h':=a\circ \d \phi, \]
defining an $A$-homotopy between $a$ and $a':=h'|_{T(\partial D)}$ (see figure).

\begin{figure}[h]
    \centering
        \includegraphics[height=5.5cm]{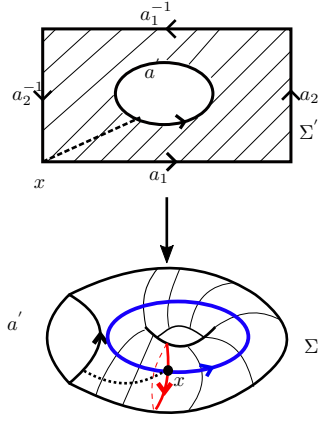}
    \caption{The $A$-paths $a_1\cdot a_2$ and $a_2\cdot a_1$ are homologous}
    \label{GHom}
\end{figure}

If we now glue the opposite sides of the square $\partial(I\times I)$,  we obtain a square with genus $\Sigma$ and {the retraction can be chosen so that} $h'$ induces a Lie algebroid map $h:T\Sigma\to A$ which is an $A$-homology between $a$ and $0_x$. 
\end{proof}

One may wonder why allowing for homotopies with genus results in much more collapsing than ordinary homotopies. Here is one explanation: assume $\cG\tto M$ is an s-connected integration of $A$ and $a_1\sim a_2$ via an $A$-homotopy $h:T(I\times I)\to A$. The pair groupoid is the source 1-connected integration of $T(I\times I)$, and this $A$-homotopy  integrates to a groupoid morphism $H:(I\times I)\times(I\times I)\to \cG(A)$.
Such a groupoid morphism is necessary of the form 
\[ H((t_1,\eps_1),(t_2,\eps_2))=\tilde{h}(t_1,\eps_1)\cdot \tilde{h}(t_2,\eps_2)^{-1}, \]
where $\tilde{h}:I\times I\to \s^{-1}(x)$, $h(0,\eps)=1_x$, is an ordinary homotopy, with fixed end-points, between $g_1,g_2:I\to s^{-1}(x)$, the paths integrating $a_1$ and $a_2$ (\cite[Section 3]{CF3}). Hence, $A$-homotopies lead to identification of paths in source fibers starting at identities that are homotopic and we have $\cG(A)=\tilde{\cG}$, the universal covering of $\cG$.

On the other hand, assume that $a_1\genus a_2$ are homologous $A$-paths via a $A$-homology $h:T\Sigma\to A$. The source 1-connected integration of $T\Sigma$ is not anymore the pair groupoid. If $\Sigma$ has genus $n$, then we may think of it has a $4n$-gon $\Delta_{4n}$ with a disc with boundary $\gamma_1\cdot\gamma_2^{-1}$ removed:

\begin{figure}[h]
    \centering
    \includegraphics[height=5cm]{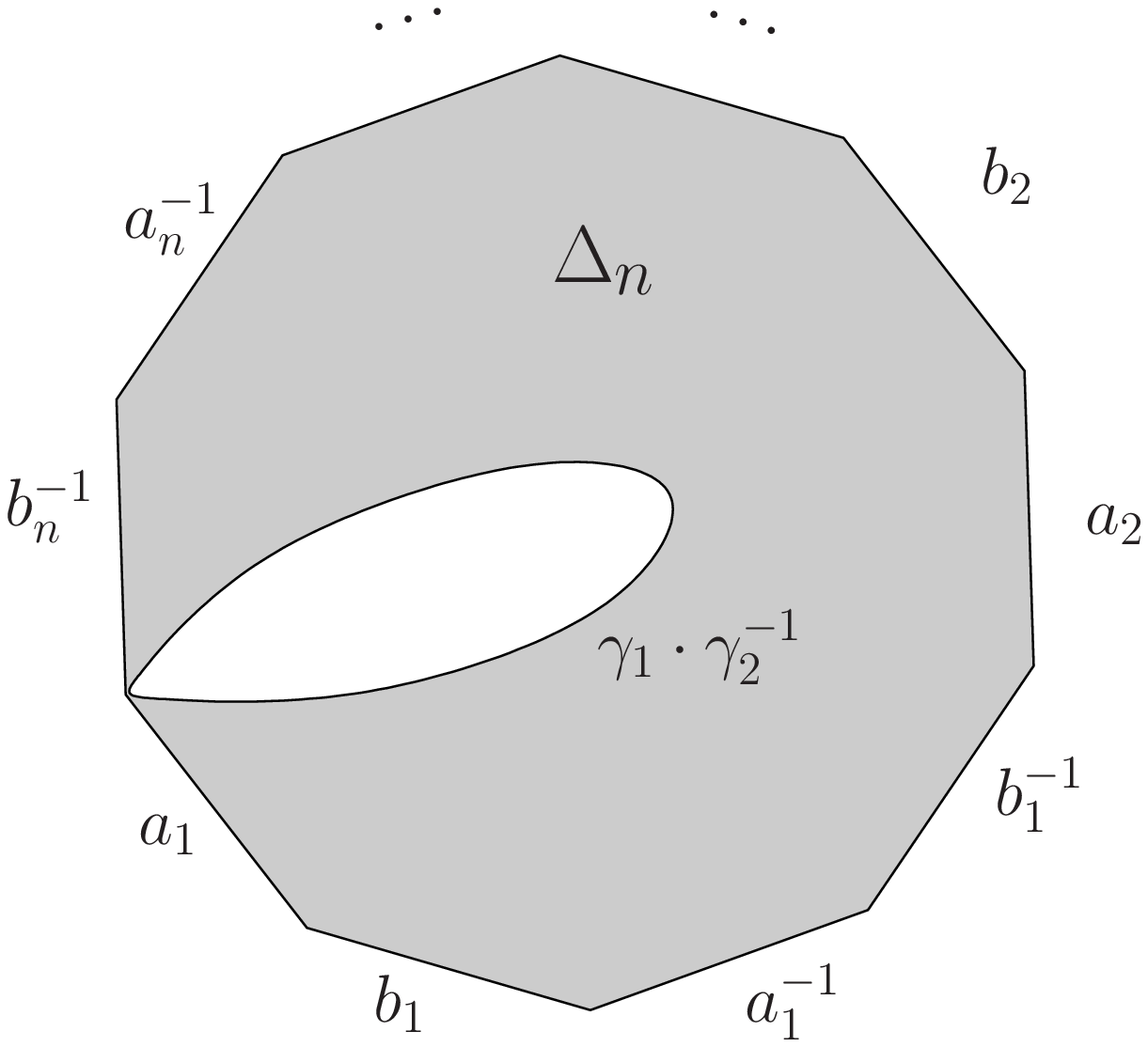}
    \label{ngon}
\end{figure}

Upon integration, we now obtain a homotopy, relative to end-points, $\tilde{h}:\Delta_{4n}\to \s^{-1}(x)$ between the path $g_1\cdot g_2^{-1}:I\to s^{-1}(x)$ integrating $a_1\cdot a_2^{-1}$ and the path $g:I\to (G_x,G_x)$ integrating the $A$-path $h|_{\partial \Delta_{4n}}$. In particular, $g_1\cdot g_2^{-1}$ is not a loop anymore. Hence, $A$-homologies lead to identification of paths in source fibers starting at identities via homotopies where one-end is fixed and the other end is a path in $(G_x,G_x)$. Hence, we find  $\cG_g(A)=\tilde{\cG}/(\tilde{\cG},\tilde{\cG})$.
\medskip

The morphism $p:\cG(A)\to\cG_g(A)$ can be thought of as a generalization of the Hurewicz homomorphism:

\begin{theorem}
\label{thm:main:1}
Let $A\to M$ be a Lie algebroid. The genus integration $\cG_g(A)$ is the set-theoretical abelianization of $\cG(A)$:
\[ \cG_g(A)=\cG(A)/(G_M(A),G_M(A)), \]
where $G_M(A)_x=\bigcup_{x\in M} \cG(A)_x$ is the bundle of isotropies of $\cG(A)$.
\end{theorem}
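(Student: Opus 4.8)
The plan is to prove the two inclusions between the congruences $\sim_{ab}$ (the equivalence relation defining the set-theoretical abelianization $\cG(A)/(G_M(A),G_M(A))$) and $\genus$ on $P(A)$, and then observe that the quotient groupoid identifications match up. Recall that the set-theoretical abelianization of $\cG(A)$ is, by the Example following the definition of groupoid abelianization, the quotient by the bundle of commutator subgroups of the isotropy groups; so concretely two $A$-paths $a_0, a_1$ with the same endpoints satisfy $a_0 \sim_{ab} a_1$ iff $[a_0]$ and $[a_1]$ differ, in $\cG(A)$, by multiplication by an element of $(G_x(A), G_x(A))$ where $x$ is the common source. Since $p : \cG(A) \to \cG_g(A)$ is a surjective groupoid morphism onto an abelian groupoid (by the Proposition just proved), its universal property gives a canonical surjective morphism $q : \cG(A)/(G_M(A),G_M(A)) \to \cG_g(A)$; the content of the theorem is that $q$ is injective, i.e. that $A$-homologous $A$-paths already differ by an isotropy commutator in $\cG(A)$.

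First I would prove the easy inclusion: if $a_0 \sim_{ab} a_1$, then $a_0 \genus a_1$. It suffices to show that for any $A$-loop $c$ representing an element of $(G_x(A), G_x(A))$ — i.e. $c \sim c_1 \cdot c_2 \cdot c_1^{-1} \cdot c_2^{-1}$ for $A$-loops $c_i$ based at $x$ — one has $c \genus 0_x$. But this is exactly the content of the proof that $\cG_g(A)$ is abelian: the commutator $A$-loop bounds a disc retracting onto a square-with-genus, hence is $A$-homologous to the constant path. Concatenating such homologies (using the vertical/horizontal gluing of squares-with-genus noted in the Remark) and using that $\sim$ implies $\genus$ handles a general element of the commutator subgroup, which is a finite product of conjugates of commutators.

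The main work, and the expected main obstacle, is the reverse inclusion: an $A$-homology $h : T\Sigma \to A$ between $a_0$ and $a_1$ must be converted into an identity in $\cG(A)$ of the form $[a_0] = \xi \cdot [a_1]$ with $\xi \in (G_x(A), G_x(A))$. Here I would use the heuristic already sketched in the excerpt and make it precise: realize $\Sigma$ of genus $n$ as a $4n$-gon $\Delta_{4n}$ with an open disc removed whose boundary is $\gamma_0 \cdot \gamma_1^{-1}$ (the concatenation of base paths). The key tool is Proposition/Theorem of \cite{CF1} (and \cite{CF3}) on integration of $A$-homotopies: restricting $h$ to the complement of the removed disc gives an $A$-homotopy from $a_0 \cdot a_1^{-1}$ to the $A$-loop $h|_{\partial \Delta_{4n}}$, and the boundary $\partial\Delta_{4n}$ of the $4n$-gon, traversed in the standard way, is precisely a product of $n$ commutators of the $A$-loops $b_1, \dots, b_{2n}$ assigned by $h$ to the $2n$ generating edges. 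Therefore in $\cG(A)$ one gets $[a_0][a_1]^{-1} = \prod_{i=1}^{n} [b_{2i-1}][b_{2i}][b_{2i-1}]^{-1}[b_{2i}]^{-1}$, and since $h$ restricted to these edges is valued in $\Sigma$-loops based at $x$, each $[b_j]$ lies in $G_x(A)$ — actually one must be slightly careful that the $b_j$ are genuine $A$-loops at $x$, which follows from the collar boundary conditions \eqref{eq:bd:conditions} after composing with an $A$-homotopy pushing the vertices of the polygon to the basepoint. Hence $[a_0][a_1]^{-1} \in (G_x(A), G_x(A))$, which is exactly $a_0 \sim_{ab} a_1$.

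The delicate points I would need to nail down: (i) the standard identification of the genus-$n$ surface with the $4n$-gon with a disc removed, together with the fact that the boundary word of the $4n$-gon is the product of commutators $\prod [x_i, y_i]$ — this is classical surface topology but must be transported to the $A$-path setting via $h$; (ii) the precise invocation of the $A$-homotopy-integration result of \cite{CF1,CF3}, used not to integrate to a Lie groupoid (which may not exist) but simply to conclude equalities of $\sim$-classes in $\cG(A)$ — in fact one can argue entirely at the level of $A$-homotopies without any integrability hypothesis, since an $A$-homotopy between $A$-paths valued in a polygon maps down to an $A$-homotopy in $A$ via $h$; and (iii) checking that $q$ is also injective at the level of the groupoid structure (not just on source fibers), which is automatic once it is a bijection compatible with multiplication. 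I expect (i)–(ii) to be the bulk of the argument and the place where care with basepoints and collar neighborhoods is essential.
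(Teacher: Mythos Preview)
Your proposal is correct and follows essentially the same route as the paper: reduce to showing that an $A$-loop $A$-homologous to $0_x$ is $A$-homotopic to a product of commutators of $A$-loops, then cut the genus-$n$ square $\Sigma$ along generators of $\pi_1(\Sigma)$ to realize it as a polygon whose outer boundary word is $\prod_i (a_i,b_i)$, and read off the desired $A$-homotopy by composing $h$ with this parameterization. The paper phrases this with a $(4n+1)$-gon (working with $a$ versus $0_x$), whereas you use the equivalent model of a $4n$-gon with a disc removed (working with $a_0\cdot a_1^{-1}$), but the argument is the same; the delicate points (i)--(iii) you flag are exactly the ones the paper handles implicitly via the collar-neighborhood conventions and the remarks on gluing and reparameterization of $A$-paths.
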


\begin{proof}
Let $[a]\in\cG(A)_x$ be some $A$-homotopy class. We need to prove that $[a]_g=[0_x]_g$ if and only if $[a]\in (\cG(A)_x,\cG(A)_x)$. Since $\cG_g(A)$ is abelian, one direction is clear, so we are left to show that given an $A$-loop $a$ based at $x$ such that $a\genus 0_x$ then $a$ is $A$-homotopic to a product of commutators of $A$-loops. 

Let $h:T\Sigma\to A$ be an $A$-homology between the $A$-loop $a:I\to A$ and the zero $A$-loop $0_x$. Denote by $e_0\in\partial \Sigma$ the base point that maps to $x$ and choose smooth cycles $\gamma_i,\eta_i:I\to \Sigma$ based at $e$, such that $\{\partial \Sigma,\gamma_1,\dots,\gamma_{n},\eta_1,\dots,\eta_{n}\}$ is a set of generators for $\pi_1(\Sigma,e)$ ($n$ is the genus of $\Sigma$). Then $a_i:=h\circ\d \gamma_i$ and $b_i=h\circ \d \tau_i$ are $A$-paths. We can choose a smooth parameterization $\phi:\Delta\to \Sigma$, where $\Delta$ is a polygon with $4n+1$ sides, and assume that $\phi$ restricts on the sides to the generators $\gamma_i,\eta_i$ and to the base path of $a$. The composition $h\circ\d\phi$ determines an $A$-homotopy between $a$ and the concatenation:
\[ a_1\cdot b_1\cdot a_1^{-1} \cdot b_1^{-1} \dots \cdot a_{n}\cdot b_{n}\cdot a_{n}^{-1} \cdot b_{n}^{-1} . \]
This means that:
\[ a \sim (a_1,b_1)\cdots (a_{n},b_{n}), \]
and the result follows.
\end{proof}

The next example shows how to recover the classic Hurewicz homomorphism.

\begin{example}
Consider the tangent bundle $A=TM$ of a  manifold. Its Weinstein groupoid is the fundamental groupoid $\cG(TM)=\Pi_1(M)$ and so it genus integration is the abelianization $\cG_g(TM)=\Pi_1(M)^\ab$, as discussed in Example \ref{ex:fund:grpd}. The isotropy groups of $\Pi_1(M)^\ab$ are isomorphic to the first integer homology group of $M$, and the restriction of the morphism $p:\cG(TM)\to \cG_g(TM)$ to an isotropy group is just the usual Hurewicz homomorphism $\pi_1(M)\to H_1(M,\Z)$. 
\end{example}

\begin{example}
Let $\gg$ be a finite dimensional Lie algebra. Its Weinstein groupoid is just the 1-connected Lie group $G$ integrating $\gg$: 
\[ \cG(\gg)=G. \]
Because $G$ is 1-connected, the commutator subgroup $(G,G)$ is closed in $G$ and coincides with the 1-connected Lie group integrating $[\gg,\gg]$. Hence, the genus integration recovers the abelianization of the 1-connected integration (both set-theoretical and Lie) of the Lie algebra $\gg$:
\[ \cG_g(\gg)=G/(G,G)=\cG(\gg^\ab). \]
\end{example}

In the previous two examples the genus integration was a Lie groupoid, but of course this need not be the case. Here is a simple example.

\begin{example}
Consider the action Lie algebroid $A=\mathfrak{so}(3)\ltimes\R^3$ of Example \ref{ex:3dim:algbrd}. Its Weinstein groupoid is the action groupoid 
\[ \cG(A)={\mathrm{SU}}(2)\ltimes \R^3\tto \R^3, \]
where ${\mathrm{SU}}(2)$ acts on $\R^3$ via the covering ${\mathrm{SU}}(2)\to{\mathrm{SO}}(3)$. The isotropy groups for $x\not=0$ are isomorphic to $\Ss^1$ and at $x=0$ it is the group ${\mathrm{SU}}(2)$. Since $\Ss^1$ is abelian and $({\mathrm{SU}}(2),{\mathrm{SU}}(2))={\mathrm{SU}}(2)$, it follows that the genus integration of $A$ is the groupoid obtained from $\cG(A)$ by removing all non-identity arrows over $x=0$:
\[ \cG_g(A)=({\mathrm{SU}}(2)\ltimes (\R^3-\{0\}))\cup \{1_0\}. \]
This groupoid is not smooth.
\end{example}

\section{Extended monodromy}

The notion of extended monodromy was introduced recently in \cite{CFMb}, formalizing ideas from \cite{CM,Marcut}, as a mean to find obstructions to the existence of proper integrations of Poisson manifolds. It has also been used in \cite{AA17}. We consider here the general case of a Lie algebroid and show what exactly these groups obstruct. 

The extended monodromy of a Lie algebroid $A$ at $x\in M$ only depends on the restriction $A_L=A|_L$ of the Lie algebroid $A$ to the leaf $L$ through $x$. So consider a splitting $\sigma':TL\to A_L$ of the short exact sequence:
\[ 
\xymatrix{0\ar[r] & \gg_L\ar[r] & A_L\ar[r]^\rho & TL \ar[r]\ar@/^/[l]^{\sigma'} & 0,}
\]
This induces a splitting $\sigma:TL\to A^\ab_L$ for the the abelianization:
\[ 
\xymatrix{0\ar[r] & \gg^\ab_L\ar[r] & A_L^\ab\ar[r]^\rho & TL \ar[r]\ar@/^/[l]^{\sigma} & 0,}
\]
which has a curvature 2-form $\Omega\in\Omega^2(L,\gg^\ab_L)$ given by:
\[ \Omega(X,Y):=[\sigma(X),\sigma(Y)]-\sigma([X,Y]. \]
The splitting also induces a connection $\nabla$ on the bundle $\gg^\ab_L\to L$:
\[ \nabla_X\a:=[\sigma(X),\a]. \]
Note that two different splittings induce the same connection and the same curvature 2-form. Moreover, by the Jacobi identity, this connection is flat:
\begin{align*}
R^\nabla(X,Y)(\a)&=\nabla_X\nabla_Y\a -\nabla_Y\nabla_X\a-\nabla_{[X,Y]}\a\\
	    		   &=[\sigma(X),[\sigma(Y),\a]]-[\sigma(Y),[\sigma(X),\a]]-[\sigma([X,Y]),\a]\\
			   &=[[\sigma(X),\sigma(Y)]-\sigma([X,Y],\a]=0
\end{align*}
Hence $\gg^\ab_L\to L$ is canonically a flat bundle. Let us denote by $q:\tilde L^h\to L$ the holonomy cover of $L$ relative to $\nabla$, so that the pullback bundle $q^*\gg^\ab_L\to\tilde L^h$ is trivial with a canonical trivialization and connection.

\begin{definition}
Let $A\to M$ be a Lie algebroid. The {\bf extended monodromy homomorphism} at $x\in M$ is the homomorphism of abelian groups:
 \[ \partial_x^\ext:H_2(\tilde{L}^h)\to \cG(\gg_x^\ab), \quad [\gamma]\mapsto \exp\left(\int_\gamma q^*\Omega \right).\]
Its image $\cN^\ext_x(A)\subset \cG(\gg_x^\ab)$ is called the {\bf extended monodromy group} at $x$.
\end{definition}

Notice that the morphism $\partial_x^\ext$, and hence also its image $\cN_x^\ext$, only depends on the abelianization $A^\ab_L$.

Recall (see \cite[Section 3]{CF1}) that for an algebroid $A$ the (ordinary) {\bf monodromy homomorphism} at $x$  is given by a homomorphism
\[ \partial_x:\pi_2(L,x)\to \cG(\gg_x). \]
Its image $\cN_x(A)\subset \cG(\gg_x)$ is called the (ordinary) {\bf monodromy group} at $x$. Since $q:\tilde L^h\to L$ is a cover, it induces an isomorphism $\pi_2(L)\simeq\pi_2(\tilde L^h)$. Composing with the Hurewicz map we obtain a morphism $\pi_2(L,x)\to H_2(\tilde L^h)$ and we have:

\begin{proposition}
The extended and ordinary monodromy homomorphisms of a Lie algebroid fit into a commutative diagram:
\[
\xymatrix{
\pi_2(L,x)\ar[d]\ar[r]^{\partial_x}\ar[dr]^{\partial^\ab_x} & \cG(\gg_x)\ar[d]\\
H_2(\tilde{L}^h)\ar[r]_{\partial_x^\ext}& \cG(\gg_x^\ab)
}
\]
where $\partial^\ab_x$ denotes the monodromy homomorphism of the abelianization $A^\ab_L$. 
\end{proposition}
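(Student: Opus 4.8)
The diagram decomposes into two triangles sharing the diagonal $\partial_x^\ab$: the upper-left triangle asserts that $\partial_x^\ab=\cG(p_x)\circ\partial_x$, where $\cG(p_x)\colon\cG(\gg_x)\to\cG(\gg_x^\ab)$ is the integration of the projection $p_x\colon\gg_x\to\gg_x^\ab$, and the lower-right triangle asserts that $\partial_x^\ab=\partial_x^\ext\circ\iota$, where $\iota\colon\pi_2(L,x)\to H_2(\tilde L^h)$ is the composition of the isomorphism $\pi_2(L)\cong\pi_2(\tilde L^h)$ with the Hurewicz homomorphism. The plan is to prove the two triangles separately. Since $\partial_x^\ab$ is literally the ordinary monodromy of the (abelian) algebroid $A_L^\ab$, the first triangle is a naturality statement for the construction of \cite{CF1}, while the second is a direct computation that exploits the fact that $\gg_L^\ab$ carries the flat connection $\nabla$.

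For the upper-left triangle I would use naturality of the ordinary monodromy with respect to surjective Lie algebroid morphisms covering the identity on the leaf. Recall from \cite[Section 3]{CF1} that $\partial_x([\gamma])$ is computed by representing $[\gamma]$ by a map $\gamma\colon I\times I\to L$ collapsing $\partial(I\times I)$ to $x$, lifting $\gamma$ to an $A_L$-homotopy $h\colon T(I\times I)\to A_L$ over $\gamma$ normalized to be trivial on part of the boundary, and recording the induced $\gg_x$-path as a class in $\cG(\gg_x)$; equivalently, after choosing the splitting $\sigma'$, $\partial_x([\gamma])$ is obtained by developing the curvature $\Omega^{\sigma'}\in\Omega^2(L,\gg_L)$ over $\gamma$. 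Now $p\colon A_L\to A_L^\ab$ is surjective, covers $\mathrm{id}_L$, satisfies $\Im\rho_x=\Im\rho_x^\ab$, restricts to $p_x\colon\gg_x\to\gg_x^\ab$ on isotropy, and carries $\sigma'$ to $\sigma=p\circ\sigma'$ and $\Omega^{\sigma'}$ to $\Omega=p\circ\Omega^{\sigma'}$, exactly as in the proof of Proposition~\ref{prop:abel:algbrd}. Composing $h$ with $p$ therefore yields an $A_L^\ab$-homotopy over the same $\gamma$, with the analogous trivial data on the boundary and with $p_x$ applied to the recorded $\gg_x$-path on the remaining side; hence $\partial_x^\ab([\gamma])=\cG(p_x)\bigl(\partial_x([\gamma])\bigr)$, which is the commutativity of the upper-left triangle.

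For the lower-right triangle I would work entirely inside $A_L^\ab$, where $\cG(\gg_x^\ab)$ is the vector group $\gg_x^\ab$, $\exp$ is the identity, and $\nabla$ is flat. Unwinding the construction of \cite{CF1} for the abelian algebroid $A_L^\ab$ using the explicit bracket $[(X,\xi),(Y,\eta)]=([X,Y],\nabla_X\eta-\nabla_Y\xi+\Omega(X,Y))$, one finds that for $\gamma\colon S^2\to L$ based at $x$ one has $\partial_x^\ab([\gamma])=\int_{S^2}\gamma^*\Omega$, the integral being taken in the trivialization of $\gamma^*\gg_L^\ab$ given by parallel transport of $\gg_x^\ab$ — well defined because $\gamma^*\nabla$ is flat and $S^2$ is simply connected. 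On the other hand, since $q\colon\tilde L^h\to L$ is a covering, $\gamma$ lifts to $\tilde\gamma\colon S^2\to\tilde L^h$ with $q\circ\tilde\gamma=\gamma$ and basepoint a fixed lift of $x$; its Hurewicz class is $\iota[\gamma]\in H_2(\tilde L^h)$, and
\[
\partial_x^\ext(\iota[\gamma])=\exp\left(\int_{\tilde\gamma}q^*\Omega\right)=\int_{S^2}\tilde\gamma^*q^*\Omega=\int_{S^2}\gamma^*\Omega,
\]
now computed in the trivialization of $\gamma^*\gg_L^\ab=\tilde\gamma^*q^*\gg_L^\ab$ pulled back from the canonical parallel trivialization of $q^*\gg_L^\ab$ on the holonomy cover. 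These two trivializations coincide — both are parallel transport from the basepoint, and the pullback along $\tilde\gamma$ of parallel transport on $\tilde L^h$ equals parallel transport on $S^2$ of the pulled-back connection — so the two integrals agree, giving the commutativity of the lower-right triangle and hence of the whole diagram.

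The step I expect to be the main obstacle is making the naturality statement used for the upper-left triangle fully precise for the non-abelian $A_L$: over a $2$-sphere the $\gg_L$-valued curvature $\Omega^{\sigma'}$ is not flat, so $\partial_x([\gamma])$ is defined only through a developing/patching argument (or, equivalently, through the $A$-homotopy lifting described above), and one must verify carefully that this construction is transported by the bundle map $p$ to the corresponding construction for $A_L^\ab$ and is compatible with the integration homomorphism $\cG(p_x)\colon\cG(\gg_x)\to\cG(\gg_x^\ab)$. By contrast, once one is inside the abelianization the connection is flat, all spherical periods are literal integrals, and the comparison with $\partial_x^\ext$ on $H_2(\tilde L^h)$ reduces to bookkeeping with parallel-transport trivializations; so the delicate point is really the first triangle, not the second.
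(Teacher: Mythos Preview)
Your proposal is correct and follows essentially the same two-triangle decomposition as the paper. The only difference is one of packaging: where you unpack the naturality argument for the upper-left triangle by pushing an $A_L$-homotopy through $p$, the paper simply cites functoriality of monodromy from \cite{CF1,CF3}; and where you work through the parallel-transport trivializations for the lower-right triangle, the paper invokes \cite[Lemma~3.6]{CF1} directly for the integral formula $\partial_x^\ab([\gamma])=\exp\bigl(\int_\gamma\Omega\bigr)$ and reads off the comparison with $\partial_x^\ext$ from the definition. In particular, the step you flag as the potential obstacle---naturality of the non-abelian monodromy under $p$---is a known general fact, so your concern there is unwarranted.
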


\begin{proof}
It follows from  \cite{CF1,CF3} that monodromy is functorial. In particular, this means that the surjective morphism $p:A_L\to A_L^\ab$ induces a commutative diagram:
\[
\xymatrix{
\pi_2(L,x)\ar@{=}[d]\ar[r]^{\partial_x} & \cG(\gg_x)\ar[d]\\
\pi_2(L,x)\ar[r]_{\partial_x^\ab}& \cG(\gg_x^\ab)
}
\]
where the bottom row is the monodromy homomorphism of the abelianization $A^\ab_L$. 

Since $A^\ab_L$ is abelian and the exponential map $\exp:\gg^\ab_x\to \cG(\gg_x^\ab)$ is surjective, it follows from \cite[Lemma 3.6]{CF1} that the monodromy of $A^\ab_L$ is given by:
\[ \partial^\ab_x:\pi_2(L,x)\to \cG(\gg_x^\ab), \quad [\gamma]\mapsto \exp\left(\int_\gamma \Omega \right).\]
Hence, the definition of extended monodromy shows that we also have a commutative diagram:
\[
\xymatrix{
\pi_2(L,x)\ar[d]\ar[r]^{\partial_x^\ab} & \cG(\gg_x^\ab)\ar@{=}[d]\\
H_2(\tilde{L}^h)\ar[r]_{\partial_x^\ext}& \cG(\gg_x^\ab)
}
\]
Composing the two diagrams, the result follows.
\end{proof}

Recall that the integrability obstructions for a Lie algebroid $A$ are expressed in terms of the discreteness of the monodromy groups $\cN_x(A)$. In particular, the transitive Lie algebroid $A_L$ is integrable iff $\cN_x(A)$ is discrete for some $x\in L$. In spite of the previous proposition, we observe that:
\begin{itemize}
\item $\cN_x^\ext(A)$ discrete $\Rightarrow$ $\cN_x(A^\ab_L)$ discrete;
\item It is possible for one of the groups $\cN_x(A)$ or $\cN_x^\ext(A)$ to be discrete, and the other one not;
\item It is possible for one of the groups $\cN_x(A)$ or $\cN_x(A^\ab)$ to be discrete, and the other one not.
\end{itemize}
The first statement follows from the definition of the extended monodromy. The remaining statements are illustrated in the examples below. So, in general, the only conclusion one can draw about integrability is:

\begin{corollary}
\label{cor:ext:integ}
If the extended monodromy of $A$ at some $x\in L$ is discrete  then $A^\ab_L$ is an integrable Lie algebroid.
\end{corollary}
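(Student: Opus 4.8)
The plan is to deduce this directly from the Proposition above together with the Crainic--Fernandes integrability criterion of \cite{CF1}, applied to the transitive Lie algebroid $A^\ab_L$.

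First I would observe that $A^\ab_L = A_L/[\gg_L,\gg_L]$ is again a \emph{transitive} Lie algebroid over $L$: by Proposition \ref{prop:abel:algbrd} the anchor of $A^\ab_L$ has the same image as that of $A_L$, namely all of $TL$. Hence the integrability criterion of \cite{CF1} applies verbatim to $A^\ab_L$: since $L$ is connected and $A^\ab_L$ is transitive, $A^\ab_L$ is integrable if and only if the monodromy group $\cN_x(A^\ab_L)\subset\cG(\gg^\ab_x)$ is discrete for some (equivalently, every) $x\in L$ --- note that for a single transitive algebroid over a connected base there is no uniform-discreteness hypothesis to verify.

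Next I would invoke the Proposition: the monodromy homomorphism $\partial^\ab_x$ of $A^\ab_L$ factors as $\partial^\ab_x = \partial^\ext_x\circ h$, where $h\colon\pi_2(L,x)\to H_2(\tilde L^h)$ is the composite of the isomorphism $\pi_2(L)\simeq\pi_2(\tilde L^h)$ with the Hurewicz map. Therefore
\[ \cN_x(A^\ab_L) = \Im\big(\partial^\ab_x\big) \subseteq \Im\big(\partial^\ext_x\big) = \cN^\ext_x(A) . \]
If $\cN^\ext_x(A)$ is discrete in $\cG(\gg^\ab_x)$, then so is any subgroup of it, in particular $\cN_x(A^\ab_L)$; combining with the previous paragraph, $A^\ab_L$ is integrable. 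The argument is essentially immediate once the Proposition is in hand; the only points needing a little care are to use the correct form of the integrability criterion (plain discreteness of the monodromy at one point, no uniformity) and to check transitivity of $A^\ab_L$ so that this form applies --- so I do not anticipate any genuine obstacle here.
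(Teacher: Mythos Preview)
Your proof is correct and follows exactly the approach the paper intends: the paper records just before the corollary that ``$\cN_x^\ext(A)$ discrete $\Rightarrow$ $\cN_x(A^\ab_L)$ discrete'' as a direct consequence of the preceding Proposition (the factorization $\partial^\ab_x=\partial^\ext_x\circ h$), and then the corollary is immediate from the integrability criterion for transitive algebroids. Your added remarks on transitivity of $A^\ab_L$ and the form of the integrability criterion are the right points of care, but the argument is the same.
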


The following examples are based in the following well-known construction. Let $\gg_L=\gg\times L\to L$ be a trivial bundle with fibre a Lie algebra $\gg$ and let $\Omega\in \Omega^2(L;Z(\gg))$ be a closed 2-form with values in the center of $\gg$. Then $A_L=TL\oplus \gg_L$ has a Lie algebroid structure with isotropy $\gg_L$, where the anchor is projection in the first factor and the bracket is given by:
\[ [(X,\xi),(Y,\eta)]=([X,Y],[\xi,\eta]_{\gg_M}+\Lie_X\eta-\Lie_Y\xi+\Omega(X,Y)). \]
The monodromy of the resulting algebroid $A_L$ at $x$ is:
\[ \cN_x(A_L)=\left\{\exp\left(\int_\gamma \Omega \right): [\gamma]\in\pi_2(L,x)\right\}\subset Z(\cG(\gg)). \]
On the other hand, the abelianization of $A_L$ is the algebroid $A^\ab_L=TL\oplus \gg^\ab_L$ with anchor projection on the first factor and Lie bracket given by a similar formula where $\Omega$ is replaced by $\Omega^\ab:=p\circ\Omega$, where $p:\gg\to\gg^\ab$ is the projection. In particular, we find that the extended monodromy of $A_L$ is given by:
\[ \cN_x^\ext(A_L)=\left\{\exp\left(\int_\gamma \Omega^\ab \right): [\gamma]\in H_2(L)\right\}\subset \cG(\gg^\ab). \]
The (ordinary) monodromy of $A^\ab_L$ is the subgroup $\cN_x(A^\ab_L)\subset \cN_x^\ext(A_L)$ obtained by restricting to classes $[\gamma]\in\pi_2(L)$.

\begin{example}
Let $\gg$ be the 3-dimensional Heisenberg Lie algebra defined by:
\[ [e_1,e_2]=[e_1,e_3]=0, \quad [e_2,e_3]=e_1. \]
We have:
\[ [\gg,\gg]=\R e_1=Z(\gg), \qquad \gg^\ab=\R e_2\oplus\R e_3. \] 
Now let $L=\Ss^2\times\Ss^2$ and consider the $Z(\gg)$-valued 2-form on $L$ given by:
\[ \Omega=\pr^*_1\omega_{\Ss^2}+\sqrt{2}\pr^*_2\omega_{\Ss^2}, \]
where $\omega_{\Ss^2}$ is an area form on $\Ss^2$ with total area 1, and $\pr_i:\Ss^2\times\Ss^2\to \Ss^2$ are the projections. We obtain a Lie algebroid $A_L$ for which:
\begin{align*}
\cN_x(A_L)&=\{\exp((n_1+n_2\sqrt{2})e_1): n_i\in \Z\}, \\ 
\cN_x(A^\ab_L)&=\cN_x^\ext(A_L)=\{1\}.
\end{align*}
So $A_L$ is not integrable, but $A^\ab_L$ is and the extended monodromy is discrete.
\end{example}

\begin{example}
Let $\gg$ be the 4-dimensional Lie algebra obtained by extending the 3-dimensional Lie algebra of the previous example by a central element $e_4$, so we have the extra relation:
\[ [e_i,e_4]=0, \quad (i=1,2,3). \]
We now have: 
\[ [\gg,\gg]=\R e_1, \quad Z(\gg)=\R e_1\oplus\R e_4,\quad \gg^\ab=\R e_2\oplus\R e_3\oplus\R e_4. \] 
Again let $L=\Ss^2\times\Ss^2$ and consider the $Z(\gg)$-valued 2-form on $L$ given by:
\[ \Omega=\pr^*_1\omega_{\Ss^2}(e_1+e_4)+\sqrt{2}\pr^*_2\omega_{\Ss^2}(e_1-e_4). \]
We now obtain a Lie algebroid $A_L$ for which:
\begin{align*}
\cN_x(A_L)&=\{\exp(n_1 (e_1+e_4)+n_2\sqrt{2}(e_1-e_4)): n_i\in \Z\}, \\ 
\cN_x(A^\ab_L)&=\cN_x^\ext(A_L)=\{\exp((n_1+n_2\sqrt{2})e_4): n_i\in \Z\}.
\end{align*}
So $A_L$ is integrable, but $A^\ab_L$ is not (and the extended monodromy is not discrete).
\end{example}

\begin{example}
We modify the previous example by taking $L=\T^2\times \T^2$ and replacing the form $\omega_{\Ss^2}$ by an area form $\omega_{\T^2}$ on $\T^2$ with total area 1. Then we see immediately that
\begin{align*}
\cN_x(A_L)&=\cN_x(A^\ab_L)=\{1\},\\ 
\cN_x^\ext(A_L)&=\{\exp((n_1+n_2\sqrt{2})e_4): n_i\in \Z\}.
\end{align*}
So $A_L$ and $A^\ab_L$ are integrable, but the extended monodromy is not discrete.
\end{example}

\begin{remark}
As we have observed before, the extended monodromy  groups were studied in \cite{CFMb} in the special case of cotangent bundles of regular Poisson manifolds. There it is shown that, in certain favorable cases, the extended monodromy can be computed by transverse variations of symplectic areas of compact surfaces of arbitrary genera, generalizing the result in \cite{CF2} for the (ordinary) monodromy of regular Poisson manifolds.
\end{remark}

\section{Smooth genus integrations}
\label{section:smooth:genus:integration}

In the previous section we saw that the discreteness of the extended monodromy implies the integrability of $A^\ab$, but not the converse. So it still remains to understand what the extended monodromy groups obstruct. In this section we will show that they control the smoothness of the genus integration of a cover of $A$. Henceforth we will assume that $A\to L$ is a \emph{transitive algebroid}. In particular, $A^\ab$ exists and it is obtained by factoring the commutators $[\gg_L,\gg_L]$.

We start with the following result that allows us to reduce the problem to the case of an abelian Lie algebroid.

\begin{proposition}
Let $A\to L$ be a transitive Lie algebroid. There is a commutative diagram of surjective groupoid morphisms of topological groupoids:
\[
\xymatrix{
\cG(A)\ar[r]\ar[d] & \cG(A^\ab)\ar[d]\\
\cG_g(A)\ar[r]^--{\simeq}& \cG_g(A^\ab)
}
\]
where the bottom arrow is an isomorphism.
\end{proposition}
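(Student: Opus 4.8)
The plan is to build the diagram one arrow at a time, using the universal property of the abelianization $p:A\to A^\ab$ at the algebroid level and functoriality of the path-space constructions $\cG(-)$ and $\cG_g(-)$. First I would note that the surjective morphism $p:A\to A^\ab$ of Proposition~\ref{prop:abel:algbrd} induces, by postcomposition, a surjective map $P(A)\to P(A^\ab)$ on $A$-paths, since $\rho^\ab\circ(p\circ a)=\rho\circ a=\dot\gamma$ and surjectivity of $p$ (covering the identity) lets us lift any $A^\ab$-path to an $A$-path. This map sends $A$-homotopies to $A$-homotopies (an $A$-homotopy $h:T(I\times I)\to A$ composes with $p$ to an $A^\ab$-homotopy) and $A$-homologies to $A$-homologies (same argument, with $\Sigma$ a square with genus), so it descends to groupoid morphisms $\cG(A)\to\cG(A^\ab)$ and $\cG_g(A)\to\cG_g(A^\ab)$ fitting into the square, all continuous for the quotient topologies induced by the $C^1$-topology on path spaces, and all surjective. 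The right vertical arrow is the canonical $p:\cG(A^\ab)\to\cG_g(A^\ab)$ and the left vertical arrow is $p:\cG(A)\to\cG_g(A)$, so commutativity is immediate from the definitions on representatives.

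The substance is the claim that the bottom arrow $\cG_g(A)\to\cG_g(A^\ab)$ is an isomorphism. Surjectivity is already in hand, so the point is injectivity: if $a,a'$ are $A$-paths with $p\circ a\genus p\circ a'$ in $A^\ab$, then $a\genus a'$ in $A$. By the usual reduction (concatenating with an inverse, and using that $p$ intertwines concatenation and inverses), it suffices to show that if $a$ is an $A$-loop at $x$ with $p\circ a\genus 0_x$ in $A^\ab$, then $a\genus 0_x$ in $A$. By Theorem~\ref{thm:main:1} applied to $A^\ab$, the condition $p\circ a\genus 0_x$ means $[p\circ a]\in(\cG(A^\ab)_x,\cG(A^\ab)_x)$, i.e. $p\circ a$ is $A^\ab$-homotopic to a product of commutators of $A^\ab$-loops. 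Lifting each such $A^\ab$-loop to an $A$-loop (surjectivity of $P(A)\to P(A^\ab)$) and forming the corresponding product of commutators $c$ of $A$-loops, we get that $a$ and $c$ have the same image in $\cG(A^\ab)$. Now $[a][c]^{-1}\in\cG(A)_x$ maps to $1$ in $\cG(A^\ab)_x$, hence lies in $\ker\bigl(\cG(A)_x\to\cG(A^\ab)_x\bigr)$; since $A^\ab=A/[\gg_L,\gg_L]$, this kernel is contained in the subgroup generated by commutators of isotropy elements, i.e. $[a][c]^{-1}\in(\cG(A)_x,\cG(A)_x)$. Then $[a]=\bigl([a][c]^{-1}\bigr)[c]$ is a product of commutators of isotropy elements of $\cG(A)$ (elements of $(\cG(A)_x,\cG(A)_x)$) times commutators of $A$-loops, hence lies in $(\cG(A)_x,\cG(A)_x)$; invoking Theorem~\ref{thm:main:1} for $A$ gives $a\genus 0_x$, as desired.

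The step I expect to be the main obstacle is controlling the kernel of $\cG(A)_x\to\cG(A^\ab)_x$ at the level of $A$-homotopy classes. On Lie algebras the analogue is clean — $(G,G)=\ker(G\to G^\ab)$ when $G$ is $1$-connected — and here the correct statement is that the monodromy-twisted version still holds: the kernel of the map of source-$1$-connected integrations induced by $A\to A/[\gg_L,\gg_L]$ is exactly the (closure-free, set-theoretic) commutator subgroupoid of the isotropy, because the fibrewise exponential for the isotropy bundle surjects onto $[\gg_L,\gg_L]$-generated elements. I would prove this by the splitting argument of Proposition~\ref{prop:abel:algbrd}: choosing $\sigma:TL\to A$ identifies $\cG(A)$ and $\cG(A^\ab)$ fibrewise with extensions of $\Pi_1(L)$ by the source-$1$-connected integrations of $\gg_L$, $\gg^\ab_L$ respectively, and the claim becomes the corresponding statement for $\cG(\gg_x)\to\cG(\gg^\ab_x)$, which is Example~\ref{ex:Lie:grp}. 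Alternatively, and perhaps more cleanly, one can bypass this kernel computation entirely by running the commutator-decomposition of Theorem~\ref{thm:main:1} for $A$ directly on an $A$-homology lifting the $A^\ab$-homology witnessing $p\circ a\genus 0_x$ — lift the square with genus $\Sigma$ and the morphism $h^\ab:T\Sigma\to A^\ab$ through $p$ (possible fibrewise over $\Sigma$ after trivializing, with the obstruction to a global lift living in $[\gg_L,\gg_L]$ and hence absorbable into extra commutators), obtaining an $A$-homotopy of $a$ to a product of commutators of $A$-loops plus commutators of isotropy $A$-loops, which is all one needs.
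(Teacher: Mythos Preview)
Your proposal is correct and follows essentially the same route as the paper: build the square by functoriality of $p:A\to A^\ab$ on paths and homologies, then use Theorem~\ref{thm:main:1} to reduce injectivity of the bottom arrow to the inclusion $\ker\bigl(\cG(A)_x\to\cG(A^\ab)_x\bigr)\subset(\cG(A)_x,\cG(A)_x)$. The paper handles that kernel step more cleanly than either of your proposed workarounds by invoking Proposition~\ref{prop:morphism} from the appendix, which for $p:A\to A/[\gg_L,\gg_L]$ gives $K_x=(\cG(\gg_x),\cG(\gg_x))/\cN_x(A)=(\cG(A)_x^0,\cG(A)_x^0)\subset(\cG(A)_x,\cG(A)_x)$ directly.
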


\begin{proof}
The top arrow is the groupoid morphism $p_*:\cG(A)\to \cG(A^\ab)$ induced by the surjective algebroid morphism $p:A\to A^\ab$. By Proposition \ref{prop:morphism} in the Appendix, this is a surjective groupoid morphism whose kernel is the bundle of Lie groups $K\to M$ with fiber:
\[ K_x=(\cG(\gg_x),\cG(\gg_x))/\cN_x=(\cG(A)_x^0,\cG(A)_x^0), \]
where $\cG(A)_x^0=\cG(\gg_x)/\cN_x$ is the connected component of the identity of the isotropy group $\cG(A)_x$.

On the other hand, by Theorem \ref{thm:main:1}, the vertical arrows are the projections from the groupoids to the quotient by the commutators of the isotropy and hence are also surjective. Since $p_*$ maps commutators to commutators, it induces a surjective groupoid morphism $\cG_g(A)\to \cG_g(A^\ab)$ closing the square. Finally, this map is injective since $(\cG(A)_x^0,\cG(A)_x^0)\subset(\cG(A)_x,\cG(A)_x)$.
\end{proof}

The previous proposition together with the fact that the extended monodromy groups $\cN^\ext_x(A)$ of a transitive Lie algebroid $A\to L$ only depend on $A^\ab\to L$, allows us to make the assumption that $A$ is an abelian Lie algebroid. 

\subsection{The trivial holonomy case} 
The key to understand the relevance of the extended monodromy groups for the smoothness of $\cG_g(A)$ is the following. Recall \cite{CF1} that the ordinary monodromy group $\cN_x(A)$ is the kernel of the covering map $\cG(\gg_x)\to \cG(A)_x^0$, so that:
\[ \cG(A)^0_x= \cG(\gg_x)/\cN_x(A). \]
Composing the covering map $\cG(\gg_x)\to \cG(A)_x$ with the map $p:\cG(A)\to \cG_g(A)$, we obtain a surjective group morphism $\phi:\cG(\gg_x)\to \cG_g(A)_x^0$. When the holonomy of the canonical connection $\nabla$ on $\gg_L\to L$ is trivial, we have:

\begin{proposition}
\label{prop:genus:monodromy}
Let $A\to L$ be an abelian transitive Lie algebroid with trivial holonomy. The extended monodromy group $\cN_x^\ext(A)$ is the kernel of the morphism $\phi:\cG(\gg_x)\to \cG_g(A)^0_x$, so that:
\[ \cG_g(A)^0_x= \cG(\gg_x)/\cN_x^\ext(A). \]
\end{proposition}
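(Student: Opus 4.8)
The plan is to identify the kernel of the surjective group morphism $\phi:\cG(\gg_x)\to \cG_g(A)^0_x$ by tracing through the path-space description of both groupoids. Since $A$ is abelian with trivial holonomy, the flat bundle $\gg_L\to L$ is trivialized globally, so we may identify $\gg_x\simeq\gg_y$ for all $y\in L$ and think of $\gg_L=\gg\times L$ with its canonical flat connection; under this identification $\cG(\gg_x)$ is just the $1$-connected integration of the abelian Lie algebra $\gg_x$. An element of $\ker\phi$ is represented by some $v\in\gg_x$ whose image in $\cG(A)_x$ is an $A$-loop $a_v$ based at $x$ that is $A$-homologous to $0_x$. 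I would first record, as in \cite[Lemma 3.6]{CF1} and the computation of $\cN_x(A)$ in Section 4, that the ordinary monodromy image is $\cN_x(A)=\{\exp(\int_\gamma\Omega):[\gamma]\in\pi_2(L,x)\}$, and that $\cN_x(A)\subset\ker\phi$ since $\cN_x(A)$ already dies in $\cG(A)_x^0$. So the content is to show $\ker\phi$ is generated by the larger family $\{\exp(\int_\gamma q^*\Omega):[\gamma]\in H_2(\tilde L^h)\}=\cN_x^\ext(A)$, which by trivial holonomy is simply $\{\exp(\int_\gamma\Omega):[\gamma]\in H_2(L)\}$.

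The two inclusions are handled separately. For $\cN_x^\ext(A)\subseteq\ker\phi$: given a homology class in $H_2(L)$, represent it by a map $f:\Sigma\to L$ from a closed genus-$n$ surface; puncture $\Sigma$ at one point to get a square with genus $\Sigma'$ (a genus-$n$ surface with one boundary circle, which after collaring is a square with genus in the sense of Section 3), and pull back the canonical flat structure. Because $\gg_L$ is trivialized and $\Sigma'$ retracts to a wedge of circles, one builds an $A$-map $h:T\Sigma'\to A$ extending the connection part, whose boundary $A$-path is the $A$-loop $a_v$ with $v=\int_\Sigma f^*\Omega$ — this is exactly the integration computation in the proof of the preceding proposition and the "$4n$-gon picture" in Section 3. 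Hence $a_v\genus 0_x$, i.e. $\exp(v)\in\ker\phi$. For the reverse inclusion $\ker\phi\subseteq\cN_x^\ext(A)$: suppose $a_v\genus 0_x$ via an $A$-homology $h:T\Sigma\to A$ over a square with genus $\Sigma$. Cap off the boundary circle of $\Sigma$ (using that $a_v$ is, up to $A$-homotopy, the boundary of a disk realizing $\exp(v)$ in the source fiber, just as in the proof of Theorem~\ref{thm:main:1}) to obtain a closed surface $\hat\Sigma$ and an $A$-map $T\hat\Sigma\to A$; the induced base map $\hat\Sigma\to L$ carries a fundamental class to some $[\gamma]\in H_2(L)$, and integrating the curvature $\Omega$ against it recovers $v$ modulo nothing, giving $\exp(v)=\exp(\int_\gamma\Omega)\in\cN_x^\ext(A)$. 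Combining, $\ker\phi=\cN_x^\ext(A)$ and the first isomorphism theorem gives $\cG_g(A)^0_x=\cG(\gg_x)/\cN_x^\ext(A)$.

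I expect the main obstacle to be the "reverse" inclusion, specifically the bookkeeping needed to turn an abstract $A$-homology $h:T\Sigma\to A$ into an honest closed-surface representative of a class in $H_2(L)$ whose $\Omega$-period is the prescribed $v$. The delicate points are: (i) that $v$ is recovered exactly, not just up to an element of $\cN_x(A)$ — this should follow because once we work on the holonomy cover $\tilde L^h$ the trivialization is canonical, so there is no ambiguity in the primitive of the connection, but one must say this carefully; and (ii) the capping-off step, where one must check that the $A$-path $h|_{\partial\Sigma}=a_v$ bounds in $\cG(\gg_x)$ an explicit disk whose $\Omega$-integral is $v$, invoking the description $\cG(A)^0_x=\cG(\gg_x)/\cN_x(A)$ and the fact that $\cG(\gg_x)$ is $1$-connected so that the capping disk exists. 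It will also be worth remarking that, since everything is abelian, concatenation of $A$-loops corresponds to addition in $\gg_x$ and gluing of (punctured) surfaces corresponds to addition of homology classes, so the map $H_2(L)\to\ker\phi$, $[\gamma]\mapsto\exp(\int_\gamma\Omega)$, is a well-defined surjective homomorphism — this is what makes the identification with $\cN_x^\ext(A)$ clean and compatible with the extended monodromy homomorphism $\partial_x^\ext$ defined in Section 4.
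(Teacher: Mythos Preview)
Your overall architecture matches the paper's: split into two inclusions, and for $\cN_x^\ext(A)\subseteq\ker\phi$ lift a closed genus-$n$ surface to an $A$-homology via a splitting, while for $\ker\phi\subseteq\cN_x^\ext(A)$ extract a closed surface in $L$ from the base map of an $A$-homology and identify its $\Omega$-period with $v$. The paper carries this out by cutting $\Sigma$ into a $(4n{+}4)$-gon, choosing a splitting $\sigma$ adapted to the boundary loop, writing $h=a\,\d t+b\,\d\eps$ with $a=(\dot\gamma,\varphi)$, $b=(\gamma',\psi)$, and integrating the reduced equation $\partial_t\psi-\partial_\eps\varphi=\Omega(\frac{\d\gamma}{\d t},\frac{\d\gamma}{\d\eps})$ twice against the boundary conditions to get $v=\int_\gamma\Omega$.

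There is, however, a genuine gap in your reverse inclusion. You propose to ``cap off'' $\Sigma$ so as to obtain an $A$-map $T\hat\Sigma\to A$ on a \emph{closed} surface, invoking that $a_v$ bounds a disk in the source fiber because $\cG(\gg_x)$ is $1$-connected. But the $A$-path $a_v$ integrates to a path in $\s^{-1}(x)$ from $1_x$ to $\exp(v)$, not to a loop; so no such bounding disk exists unless $\exp(v)=1$ in $\cG(\gg_x)$, i.e.\ $v=0$. More to the point, you do not need an $A$-map on the closed surface at all. Since $a_v$ is a $\gg_x$-path, the entire boundary of the square with genus already maps to $x$ under the \emph{base} map $\gamma$, so $\gamma$ itself descends to a closed genus-$n$ surface in $L$; the capping disk is constant and contributes nothing to $\int\Omega$. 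What remains---and what you have not supplied---is the actual verification that $v=\int_\gamma\Omega$. This is where the work lies: it is the PDE computation above, not a formal capping argument. Without it, your point (i) (``$v$ is recovered exactly'') is an assertion, not a proof.
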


\begin{proof}
Since $\gg_x$ is abelian, the exponential map $\exp:\gg_x\to\cG(\gg_x)$ is surjective. 
We need to show that $\exp(v)$ belong to the kernel of $\phi:\cG(\gg_x)\to \cG_g(A)^0_x$ if and only if:
\[ \exp(v)=\partial^\ext[\gamma], \]
for some compact surface $\gamma$.

Note that $\exp(v)$ belongs to the kernel of $q$ if and only if the $\gg_x$-path $a_0(t)=v$ is $A$-homologous to $0_x$, via some $A$-homology supported on  $\Sigma$ a square with some genus $n$. Since the base path of $a(t)=v$ is the constant path at $x$, the base map $\gamma$ of this $A$-homology has image a compact surface of genus $n$. We claim that:
\[ v=\int_\gamma \Omega, \]
so that $\exp(v)=\partial^\ext[\gamma]$. To see this we parametrize $\Sigma$ by a polygon with $4n+4$, by cutting along generators $\{\gamma_i,\eta_i\}$ of its fundamental group and its boundary, and we think of it as a homotopy $h:I\times I\to A$ between the boundary on $\Sigma$ and the closed loop $\tau=\prod_{i=1}^n(\gamma_i,\eta_i)$. Now choose a splitting $\sigma:TL\to A$ of the anchor $\rho:A\to TL$ such that:
\[ \sigma(\frac{\d \tau}{\d t})=h|_\tau. \]
and identify $\A$ with $TL\oplus \mathfrak{g}$ so the bracket becomes
\[ [(X, v), (Y, w)]= ([X, Y], [v, w]+ \nabla_{X}(w)- 
\nabla_{Y}(v)- \Omega(X, Y)) .\]
We choose a connection $\nabla^{L}$ on $L$, and we consider the 
connection $\nabla^A= (\nabla^{L}, \nabla)$ on $\A$. Note that
\[ T_{\nabla^A}((X, v), (Y, w))= (T_{\nabla^{L}}(X, Y), \Omega(X, Y)- [v, w]) \]
for all $X, Y\in TL$, $v, w\in \mathfrak{g}$. Then the $A$-homotopy above takes the form:
\[ h(t,\eps)=a(t,\eps)\d t+b(t,\eps)\d \eps, \]
where $a=(\frac{\d\gamma}{\d t}, \varphi)$, $b=(\frac{\d\gamma}{\d\epsilon}, \psi)$, and the $\gg_x$-paths $\varphi$ and $\psi$ satisfy the boundary conditions:
\[ \varphi(t,0)=v,\ \varphi(t,1)=0,\ \psi(0,\eps)=\psi(1,\eps)=0. \]
Since $[\varphi, \psi]=0$, equation \ref{diffeq} becomes the equation:
\[ \partial_{t}\psi- \partial_{\epsilon}\varphi= 
\Omega(\frac{d\gamma}{dt}, \frac{d\gamma}{d\epsilon}), \]
Integrating twice and using the boundary conditions, we find that:
\[ v=-\int_0^1\varphi(t,0)\, \d t=\int_0^1\int_0^1 \Omega(\frac{\d\gamma}{\d t}, \frac{\d\gamma}{\d\epsilon}) \, \d t\d \eps=\int_\gamma\Omega,\]
as claimed.

To prove the converse, we start with some compact surface $\gamma$ of genus $n$ in $L$ with a marked point $x$, representing some element in $H_2(L,\Z)$, and we show that $\int_\gamma \Omega$ is a (constant) $\gg_x$-path which is $A$-homologous to the trivial path:
\[ \int_\gamma \Omega \genus 0_x. \]
Again, we cut $\gamma$ as above so we can think of it as an ordinary homotopy between $x$ and $\tau=\prod_{i=1}^n(\gamma_i,\eta_i)$, We choose some splitting $\sigma$ and define $b(t,\eps)=\sigma(\frac{\d \gamma}{\d\eps})$. Then we solve equation \ref{diffeq} with initial conditions $a(0,t)=0$, obtaining a Lie algebroid map $h:=a(t,\eps)\d t+b(t,\eps)\d\eps:T(I\times I)\to A$. On the one hand, this can be thought of as an $A$-homology covering $\gamma$ between $0_x$ and the $\gg_x$-path $a(t,1)$, so that:
\[ a(\cdot,1)\genus 0_x. \]
On the other hand, an argument with a splitting as before, shows that:
\[ a(t,1)=\int_0^t \int_0^1 \Omega(\frac{\d\gamma}{\d t}, \frac{\d\gamma}{\d\epsilon}) \, \d\eps \d t. \]
But any $\gg_x$-path is $A$-homotopic to its average, so that:
\[ 0_x\genus a(\cdot,1)\sim \int_0^1 \int_0^1 \Omega(\frac{\d\gamma}{\d t}, \frac{\d\gamma}{\d\epsilon}) \, \d\eps \d t=\int_\gamma \Omega, \]
as claimed.
\end{proof}

We can now prove one of our main theorems:

\begin{theorem}
\label{thm:main:2}
Let $A\to L$ be a transitive Lie algebroid with trivial holonomy. The following statements are equivalent:
\begin{enumerate}[(a)]
\item the extended monodromy groups are discrete;
\item the genus integration $\cG_g(A)$ is smooth;
\item the abelianization $A^\ab$ has an abelian integration.
\end{enumerate}
Moreover, if any of these hold, then $\cG_g(A)$ has Lie algebroid isomorphic to $A^\ab$.
\end{theorem}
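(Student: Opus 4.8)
By the reduction preceding the theorem, I may assume $A\to L$ is an \emph{abelian} transitive Lie algebroid with trivial holonomy, and in that case $A=A^\ab$, so statements (b) and (c) become: $\cG_g(A)$ is smooth, and $A$ has an abelian integration. The plan is to prove the cycle of implications (a) $\Rightarrow$ (c) $\Rightarrow$ (b) $\Rightarrow$ (a), extracting at the end the identification of the Lie algebroid.

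For (a) $\Rightarrow$ (c): assume the extended monodromy groups $\cN_x^\ext(A)$ are discrete. Since trivial holonomy gives a global trivialization of the flat bundle $\gg_L^\ab$, one can form the bundle of Lie groups $G_M := (\bigcup_x \cG(\gg_x))/\cN^\ext$ which, by discreteness and the uniformity that comes from the flat structure (the $\cN_x^\ext$ form a locally constant subbundle of the trivial bundle $\bigcup_x\gg_x$, since they are images of the flat form $\Omega$ integrated over $H_2(\tilde L^h)$), is a Lie group bundle. One then assembles an abelian transitive Lie groupoid $\cH\tto L$ with this isotropy: concretely, glue $\cG(\gg_x)/\cN_x^\ext$ over a covering of $L$ using the flat transition data. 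This $\cH$ has Lie algebroid $A^\ab=A$ because dividing a Lie group by a discrete normal subgroup does not change its Lie algebra, and the extension data (the form $\Omega$) match. Thus (c) holds.

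For (c) $\Rightarrow$ (b): if $A$ has an abelian integration, then by Crainic--Fernandes integrability the source $1$-connected integration $\cG(A)$ is smooth; but for an abelian integrable algebroid the monodromy groups $\cN_x(A)$ are already discrete, and by Proposition~\ref{prop:genus:monodromy} we have $\cG_g(A)^0_x=\cG(\gg_x)/\cN_x^\ext(A)$. Now use Theorem~\ref{thm:main:1}: $\cG_g(A)$ is the set-theoretic abelianization of $\cG(A)$, i.e.\ the quotient by the commutator subgroupoid of the isotropy, and since $A$ is already abelian $\cG(A)$ has connected abelian isotropy $\cG(\gg_x)/\cN_x(A)$, so $\cG_g(A)$ differs from $\cG(A)$ only by further quotienting each isotropy by $\cN_x^\ext(A)/\cN_x(A)$. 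Discreteness of $\cN_x^\ext$ (which follows from (c) via Corollary~\ref{cor:ext:integ}, run through the flat structure) makes this quotient a Lie groupoid, and the projection $P(A)\to\cG_g(A)$ is a submersion since it factors through $P(A)\to\cG(A)$. Hence $\cG_g(A)$ is smooth. For (b) $\Rightarrow$ (a): if $\cG_g(A)$ is smooth, its isotropy groups $\cG_g(A)_x$ are Lie groups, so their identity components $\cG(\gg_x)/\cN_x^\ext(A)$ are Lie groups, forcing $\cN_x^\ext(A)$ to be a discrete (indeed closed, hence $0$-dimensional by a dimension count since $\gg_x$ is abelian and $\cN_x^\ext$ is countably generated) subgroup of $\cG(\gg_x)$. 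Finally, when these conditions hold, reading off $\cG_g(A)^0_x=\cG(\gg_x)/\cN_x^\ext(A)$ together with the unchanged anchor and the extension class $\Omega$ identifies the Lie algebroid of $\cG_g(A)$ with $TL\oplus\gg_L^\ab=A^\ab$, and in the original (non-abelian) setting the reduction square of the preceding proposition transports this identification back.

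\textbf{Main obstacle.} The delicate point is (a) $\Rightarrow$ (c): actually \emph{constructing} the abelian integration and checking it is a Lie groupoid with the right algebroid. One must verify that the family $\{\cN_x^\ext(A)\}$ is uniformly discrete (not merely pointwise discrete) so that the quotient group bundle is a manifold --- here the trivial-holonomy hypothesis is essential, as it makes $\Omega$ a genuine closed $\gg^\ab$-valued form on $L$ and the $\cN_x^\ext$ a locally constant subgroup bundle of a trivial bundle --- and then glue the local models $\cG(\gg_x)/\cN_x^\ext$ into a global transitive groupoid using the flat connection as descent data, checking smoothness of multiplication and that the resulting algebroid is $A^\ab$. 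The remaining implications are comparatively formal given Theorem~\ref{thm:main:1}, Proposition~\ref{prop:genus:monodromy}, and the Crainic--Fernandes integrability criterion.
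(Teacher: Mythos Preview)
Your cycle has a genuine gap in the step (c) $\Rightarrow$ (b). You write that ``discreteness of $\cN_x^\ext$ follows from (c) via Corollary~\ref{cor:ext:integ}, run through the flat structure'', but Corollary~\ref{cor:ext:integ} goes the other way: it says that discrete extended monodromy implies integrability of $A^\ab$, not that the existence of \emph{some} (let alone an abelian) integration implies the extended monodromy is discrete. Indeed the paper gives an explicit example ($L=\T^2\times\T^2$) where $A$ and $A^\ab$ are both integrable yet $\cN_x^\ext$ is not discrete. So you cannot extract discreteness of $\cN_x^\ext$ from mere integrability; what is needed is precisely the \emph{abelianness} of the integration $\cH$. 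The paper's argument uses this as follows: the universal property of the set-theoretic abelianization (Theorem~\ref{thm:main:1}) produces a factorization $\cG(A)\to\cG_g(A)\to\cH$, and on identity components of isotropy this reads $\cG(\gg_x)\to\cG_g(A)_x^0\to\cH_x^0$. Since $\cH$ is a Lie groupoid integrating $A$, the composite $\cG(\gg_x)\to\cH_x^0$ is a Lie group map between groups with the same Lie algebra, hence has discrete kernel; by Proposition~\ref{prop:genus:monodromy} that kernel contains $\cN_x^\ext$, which is therefore discrete. This is the missing idea.

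Your step (a) $\Rightarrow$ (c) is not wrong in spirit but is harder than necessary, and as written the ``gluing'' is not a proof. The paper avoids any ad hoc construction by reordering the implications: it first proves (a) $\Rightarrow$ (b) directly --- (a) gives $\cN_x(A^\ab)$ discrete, hence $\cG(A)$ is smooth, and then $\cG_g(A)=\cG(A)/(\cN^\ext/\cN)$ is the quotient of a Lie groupoid by a discrete normal bundle of groups, hence smooth with Lie algebroid $A=A^\ab$ --- and then (a) $\Rightarrow$ (c) is immediate, since $\cG_g(A)$ itself is the sought abelian integration. Your (b) $\Rightarrow$ (a) is essentially the paper's: make it precise by saying that $\phi:\cG(\gg_x)\to\cG_g(A)_x^0$ is a continuous group homomorphism into a Lie group, hence smooth, hence has closed kernel; since $\cN_x^\ext$ is the (countable) image of $H_2(\tilde L^h)$, closed implies discrete.
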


\begin{proof}
As we have observed before, we can assume that $A=A^\ab$ is abelian.
\medskip 

(a) $\Rightarrow$ (b) If we assume that the extended monodromy groups are discrete, then $A=A^\ab$ is an integrable algebroid, so $\cG(A)$ is smooth. By Proposition \ref{prop:genus:monodromy}, $\cG_g(A)$ is the quotient of $\cG(A)$ by the normal bundle of groups with fiber $\cN^\ext_x/\cN_x$. Since this bundle is discrete (=closed), we conclude that $\cG_g(A)$ is smooth and has Lie algebroid $A$.

\medskip 

(b) $\Rightarrow$ (a) Assume that the genus integration $\cG_g(A)$ is smooth. Then $\cG_g(A)_x^0$ is a Lie group and so the continuous group homomorphism $\phi:\cG(\gg_x)\to \cG_g(A)_x^0$ is a morphism of Lie groups. Hence, its kernel is a closed Lie subgroup of $\cG(\gg_x)$. By Proposition \ref{prop:genus:monodromy}, this kernel coincides with $\cN^\ext_x$, which therefore is a closed (=discrete) subgroup.
\medskip 

(a) $\Rightarrow$ (c) We already saw that (a) implies that $\cG_g(A)$ is smooth and has Lie algebroid $A=A^\ab$. So $\cG_g(A)$ is an abelian integration of $A^\ab$.
\medskip 

(c) $\Rightarrow$ (b)  Let $\cH$ be an abelian integration of $A=A^\ab$ with connected source fibers. Then $\cG(A)$ is smooth and, by the universal property, we have a commutative diagram of groupoid morphisms:
\[
\xymatrix{
\cG(A) \ar[r] \ar[d] & \cH \\
\cG_g(A) \ar[ru]
}
\]
In particular, looking at the isotropies groups, we obtain a commutative diagram:
\[
\xymatrix{
\cG(\gg_x)\ar[d]\\
\cG(A)_x^0 \ar[r] \ar[d] & \cH_x^0 \\
\cG_g(A)_x^0 \ar[ru]
}
\]
Since $\cG(\gg_x)$, $\cG(A)_x^0$ and $\cH_x^0$ are all Lie groups with the same Lie algebra $\gg_x$, the kernel of the map $\cG(\gg_x)\to \cH_x^0$ is a discrete subgroup. By the commutativity of the diagram and Proposition  \ref{prop:genus:monodromy}, this kernel contains $\cN_x^\ext$, which therefore is also discrete.
\end{proof}

\subsection{The non-trivial holonomy case} 
Let us now discuss how to get rid of the assumption of trivial holonomy. 

Given a transitive Lie algebroid $A\to L$ with non-trivial holonomy we can pull it to the holonomy cover:
\[
\xymatrix{
q^*A\ar[r] \ar[d] & A\ar[d] \\
\tilde L^h\ar[r]_q & L}
\]
The resulting Lie algebroid $q^*A\to \tilde L^h$ has trivial holonomy, so we deduce immediately the following version of Theorem \ref{thm:main:2}:

\begin{theorem}
\label{thm:main:2:hol}
Let $A\to L$ be a transitive Lie algebroid. The following statements are equivalent:
\begin{enumerate}[(a)]
\item the extended monodromy groups are discrete ;
\item the genus integration $\cG_g(q^*A)$ is smooth;
\item the abelianization $(q^*A)^\ab$ has an abelian integration.
\end{enumerate}
Moreover, if any of these hold then $\cG_g(q^*A)$ has Lie algebroid isomorphic to $(q^*A)^\ab$.
\end{theorem}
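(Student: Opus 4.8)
The strategy is to reduce Theorem~\ref{thm:main:2:hol} to Theorem~\ref{thm:main:2} by transferring the relevant data along the holonomy cover $q:\tilde L^h\to L$. The key observation is that $q^*A\to\tilde L^h$ is again a transitive Lie algebroid, and the canonical flat connection $\nabla$ on $\gg^\ab_L\to L$ pulls back to the connection $q^*\nabla$ on $q^*\gg^\ab_L=(q^*A)^\ab$-isotropy bundle, which by definition of the holonomy cover has trivial holonomy. Hence Theorem~\ref{thm:main:2} applies verbatim to $q^*A$, giving immediately the equivalences $(a')\Leftrightarrow(b)\Leftrightarrow(c)$, where $(a')$ is the statement ``the extended monodromy groups of $q^*A$ are discrete'' and the ``moreover'' clause about $\cG_g(q^*A)$ having Lie algebroid $(q^*A)^\ab$. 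So the only real content is to show that the extended monodromy groups of $q^*A$ coincide with those of $A$, i.e.\ that condition $(a)$ for $A$ is the same as condition $(a')$ for $q^*A$.

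For this identification I would argue pointwise. Fix $x\in L$ and a point $\tilde x\in\tilde L^h$ with $q(\tilde x)=x$; the isotropy Lie algebras satisfy $(q^*\gg^\ab_L)_{\tilde x}=\gg^\ab_x$, so the target groups $\cG(\gg^\ab_x)$ of both extended monodromy homomorphisms are literally the same. The extended monodromy homomorphism of $A$ at $x$ is $\partial^\ext_x:H_2(\tilde L^h)\to\cG(\gg^\ab_x)$, $[\gamma]\mapsto\exp(\int_\gamma q^*\Omega)$, where $\Omega\in\Omega^2(L,\gg^\ab_L)$ is the curvature of a splitting. On the other hand, the curvature form of the corresponding splitting of $q^*A\to\tilde L^h$ is exactly $q^*\Omega\in\Omega^2(\tilde L^h, q^*\gg^\ab_L)$, and since $q^*\nabla$ has trivial holonomy the holonomy cover of $\tilde L^h$ (relative to $q^*\nabla$) is $\tilde L^h$ itself. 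Therefore the extended monodromy homomorphism of $q^*A$ at $\tilde x$ is the map $H_2(\tilde L^h)\to\cG(\gg^\ab_x)$, $[\gamma]\mapsto\exp(\int_\gamma q^*\Omega)$ — the very same homomorphism. In particular the images agree: $\cN^\ext_x(A)=\cN^\ext_{\tilde x}(q^*A)$, so one is discrete iff the other is.

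With this identification in hand the theorem follows by direct appeal to Theorem~\ref{thm:main:2} applied to the transitive Lie algebroid $q^*A\to\tilde L^h$, which has trivial holonomy: condition $(a)$ here is equivalent to the discreteness of the extended monodromy groups of $q^*A$, which is condition $(a)$ of Theorem~\ref{thm:main:2} for $q^*A$; conditions $(b)$ and $(c)$ are identical to $(b)$ and $(c)$ of Theorem~\ref{thm:main:2} for $q^*A$; and the final clause is the ``moreover'' of Theorem~\ref{thm:main:2} for $q^*A$.

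\textbf{Main obstacle.} I do not expect a serious obstacle; the proof is essentially a bookkeeping argument. The one point that needs care is checking that the definition of ``holonomy cover of the leaf'' used in the definition of $\partial^\ext_x$ is set up so that the holonomy cover of $\tilde L^h$ with respect to $q^*\nabla$ is canonically $\tilde L^h$ again — i.e.\ that pulling back a flat bundle to its own holonomy cover trivializes the holonomy, which is standard but should be invoked explicitly. A secondary point is to confirm that the abelianization commutes with pullback along a covering map, $(q^*A)^\ab = q^*(A^\ab)$, which is immediate from Proposition~\ref{prop:abel:algbrd} since the commutator subbundle pulls back to the commutator subbundle; this is what makes the ``Lie algebroid isomorphic to $(q^*A)^\ab$'' clause meaningful and consistent with the trivial-holonomy case.
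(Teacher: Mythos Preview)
Your proposal is correct and follows exactly the paper's approach: the paper simply observes that $q^*A\to\tilde L^h$ has trivial holonomy and then states Theorem~\ref{thm:main:2:hol} as an immediate consequence of Theorem~\ref{thm:main:2}. You are in fact more careful than the paper, since you spell out the identification $\cN^\ext_x(A)=\cN^\ext_{\tilde x}(q^*A)$ and the compatibility $(q^*A)^\ab\cong q^*(A^\ab)$ that the paper leaves implicit.
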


As an immediate corollary we obtain:
 
\begin{corollary}
If $A\to L$ is a transitive Lie algebroid whose monodromy and extended monodromy are both discrete, then the genus integration $\cG_g(q^*A)$ is the abelianization of the Weinstein groupoid $\cG(q^*A)$ in the smooth category.
 \end{corollary}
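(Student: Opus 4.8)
The plan is to show directly that $\cG_g(q^*A)$, together with the canonical projection $p\colon\cG(q^*A)\to\cG_g(q^*A)$, $[a]\mapsto[a]_g$, satisfies the defining universal property of the abelianization in the category of Lie groupoids. First I would check that under the two hypotheses both objects are Lie groupoids and that $p$ is well behaved. Since $q\colon\tilde L^h\to L$ is a covering it induces an isomorphism $\pi_2(\tilde L^h)\cong\pi_2(L)$ and the isotropy Lie algebras of $q^*A$ and $A$ coincide, so the monodromy groups of $q^*A$ agree with those of $A$; discreteness of the monodromy therefore makes $q^*A$ integrable and $\cG(q^*A)$ a Lie groupoid. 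Discreteness of the extended monodromy makes $\cG_g(q^*A)$ smooth by Theorem~\ref{thm:main:2:hol}; moreover, as in the proof of Theorem~\ref{thm:main:2}, $\cG_g(q^*A)$ is realized as the quotient of $\cG(q^*A)$ by a closed (indeed discrete) normal subgroupoid, so $p$ is a surjective submersion of Lie groupoids covering the identity, with abelian target.

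Next I would verify the factorization property. Let $\cH\tto N$ be any abelian Lie groupoid and $\Phi\colon\cG(q^*A)\to\cH$ a morphism of Lie groupoids. Since $\cH$ has abelian isotropy, $\Phi$ maps every commutator of isotropy elements to an identity arrow, hence annihilates the bundle of commutators $(G_M(q^*A),G_M(q^*A))$. By Theorem~\ref{thm:main:1} this bundle is precisely the kernel of $p$ as a map of sets, so $\Phi$ factors uniquely through a set-theoretic groupoid morphism $\overline{\Phi}\colon\cG_g(q^*A)\to\cH$ with $\overline{\Phi}\circ p=\Phi$. It remains to see that $\overline{\Phi}$ is smooth, and this is where the submersion property of $p$ does the work: a set map out of the base of a surjective submersion is smooth as soon as its composite with the submersion is, and $\overline{\Phi}\circ p=\Phi$ is smooth. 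Uniqueness of $\overline{\Phi}$ among \emph{smooth} morphisms is inherited from uniqueness among set maps, because $p$ is surjective. Hence $\cG_g(q^*A)$ together with $p$ has the universal property of the abelianization in the smooth category, and by uniqueness of abelianizations it is the abelianization of $\cG(q^*A)$.

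The statement is formal once the right inputs are in place, and I expect no serious obstacle; the only subtlety to flag is that ``abelianization in the smooth category'' is genuinely a different notion from the set-theoretic one (compare Examples~\ref{ex:Lie:grp} and~\ref{ex:fund:grpd}), so one may not simply invoke Theorem~\ref{thm:main:1}. The bridge between the two is exactly that $p$ is a surjective submersion of Lie groupoids, and this in turn uses both discreteness hypotheses: discreteness of the monodromy to make the source a Lie groupoid, and discreteness of the extended monodromy to realize the target as a well-behaved quotient of it. For completeness I would note an alternative, slightly heavier route: by Proposition~\ref{prop:abel:grpd} the smooth abelianization of the transitive Lie groupoid $\cG(q^*A)$ equals $\cG(q^*A)/\overline{(G_M(q^*A),G_M(q^*A))}$, and one can check with Theorem~\ref{thm:main:2:hol} that under our hypotheses the commutator subgroupoid $(G_M(q^*A),G_M(q^*A))$ is already closed, so this coincides with the set-theoretic quotient $\cG_g(q^*A)$ of Theorem~\ref{thm:main:1}; but the universal-property argument avoids analyzing the isotropy groups and is the one I would present.
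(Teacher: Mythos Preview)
Your argument is correct; the paper states this corollary as ``immediate'' from Theorem~\ref{thm:main:2:hol} without giving a proof, and what you have written is precisely the natural way to spell out the details the authors leave implicit. Both the universal-property route and your alternative via Proposition~\ref{prop:abel:grpd} are valid, and either would be an acceptable expansion of the paper's one-line claim.
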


We still need to understand the  genus integration $\cG_g(A)$ of a Lie algebroid $A$ with non-trivial holonomy. If we look at an example with non-trivial holonomy we see that things are a bit more complicated.

\begin{example}
Consider the abelian Lie algebroid $A=T\Ss^3/\mathrm{O}(2)\to \mathbb{RP}^2$ discussed in Example \ref{ex:gauge:groupoid}. This Lie algebroid has no abelian integration, although the extended monodromy is discrete. In this example, $A$ has non-trivial holonomy since the isotropy bundle is the non-trivial line bundle over $\mathbb{RP}^2$:
\[ \gg_L=(\Ss^2\times\R)/\Z_2\to \mathbb{RP}^2. \]
Note that $H_2(\mathbb{RP}^2)=\Z$, so this Lie algebroid has discrete extended monodromy. As we saw in Example \ref{ex:gauge:groupoid}, the commutators $(\cG(A)_x,\cG(A)_x)\subset \cG(A)_x$ are closed subgroups and so the the genus integration $\cG_g(A)$ is the Lie groupoid 
\[ \cG_g(A)=\cG(A)^\ab=\Pi_1(\mathbb{RP}^2)\tto \mathbb{RP}^2,\] 
whose Lie algebroid is $T(\mathbb{RP}^2)$, and hence is not isomorphic to $A^\ab=A$.  

On the other hand, the holonomy cover of $\mathbb{RP}^2$ is $\Ss^2$ and we find that $q^*A$ is the Atiyah Lie algebroid of the Hopf fibration:
\[ q^*A=T\Ss^3/\mathrm{SO}(2)\to \Ss^2. \]
This Lie algebroid has Weinstein groupoid the gauge groupoid of the Hopf fibration:
 \[ \cG(q^*A)=(\Ss^3\times\Ss^3)/\mathrm{SO}(2)\tto \Ss^2, \]
an abelian groupoid, so we have $\cG_g(q^*A)=\cG(q^*A)$.
\end{example}

The assumption of trivial holonomy played a fundamental role in Proposition \ref{prop:genus:monodromy}: as the previous example shows, when the holonomy is not trivial, the kernel of of the morphism $\phi:\cG(\gg_x)\to \cG_g(A)^0_x$ in general does not coincide with the extended monodromy group $\cN_x^\ext(A)$. Notice that the closeness of this kernel still obstructs the existence of a smooth integration:

\begin{proposition}
\label{prop:genus:kernel}
Let $A\to L$ be an abelian transitive Lie algebroid. If $\cG_g(A)$ is smooth then the kernel of the morphism $\phi:\cG(\gg_x)\to \cG_g(A)^0_x$ is a closed Lie subgroup. Conversely, if $A$ is integrable and this kernel is closed then $\cG_g(A)$ is smooth.
\end{proposition}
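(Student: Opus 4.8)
The plan is to follow the same strategy that was used for the trivial-holonomy case (Theorem \ref{thm:main:2}, implications (a)$\Leftrightarrow$(b)), replacing ``extended monodromy group'' by the kernel $K_x:=\Ker\bigl(\phi:\cG(\gg_x)\to\cG_g(A)^0_x\bigr)$ throughout, and being careful that this substitution does not require trivial holonomy. The two directions are largely formal once one observes that $\cG_g(A)^0_x$ is always a topological group and that $\phi$ is always a continuous surjective group homomorphism (this follows from Theorem \ref{thm:main:1} together with the fact that $\cG(\gg_x)\to\cG(A)_x$ is the universal cover of the identity component and $p:\cG(A)\to\cG_g(A)$ is a continuous groupoid morphism restricting to a group homomorphism on isotropies).

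First I would prove the forward direction. Assume $\cG_g(A)$ is smooth. Then its identity-component isotropy $\cG_g(A)^0_x$ is a Lie group, so the continuous group homomorphism $\phi:\cG(\gg_x)\to\cG_g(A)^0_x$ between Lie groups is automatically smooth (by the standard fact that a continuous homomorphism of Lie groups is smooth), and hence its kernel $K_x$ is a closed Lie subgroup of $\cG(\gg_x)$. This step uses no hypothesis on the holonomy: the only input is that $\cG_g(A)$ is a Lie groupoid, so that the source-fibre isotropy is a Lie group.

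For the converse, assume $A$ is integrable and $K_x$ is closed. Since $A=A^\ab$ may be assumed abelian after the usual reduction (but here $A$ is already assumed abelian transitive), integrability means $\cG(A)$ is a Lie groupoid, and by the arguments of Section~3 (the description $\cG_g(A)=\cG(A)/(G_M,G_M)$, analogous to the discussion preceding Theorem \ref{thm:main:1}), $\cG_g(A)$ is obtained from $\cG(A)$ by quotienting by the normal subgroupoid of isotropy-commutators. Over a transitive base this commutator subgroupoid is, fibrewise, the image in $\cG(A)^0_x=\cG(\gg_x)/\cN_x(A)$ of the subgroup $K_x\subset\cG(\gg_x)$ (since $\cN_x(A)\subset K_x$ because $\cN_x(A)$ lies in the kernel of $\cG(\gg_x)\to\cG(A)_x\to\cG_g(A)_x$). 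As $K_x$ is closed and contains the discrete $\cN_x(A)$, its image $K_x/\cN_x(A)$ is a closed subgroup of the Lie group $\cG(A)^0_x$; by transitivity (using the gauge-groupoid picture of Proposition \ref{prop:abel:grpd}, i.e. realizing $\cG(A)$ as $(\s^{-1}(x)\times\s^{-1}(x))/\cG(A)_x$ and the commutator subgroupoid as the associated bundle with fibre $K_x/\cN_x(A)$) this subgroupoid is a closed embedded normal Lie subgroupoid of $\cG(A)$, so the quotient $\cG(A)/\overline{(G_M,G_M)}=\cG_g(A)$ is a Lie groupoid.

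The main obstacle is the converse direction, and specifically the claim that closedness of the single subgroup $K_x\subset\cG(\gg_x)$ propagates to give a \emph{closed embedded} normal Lie subgroupoid $(G_M,G_M)\subset\cG(A)$ of the whole transitive groupoid. This is exactly the kind of statement established in Proposition \ref{prop:abel:grpd} for commutator subgroupoids, and the argument there — writing $\cG(A)$ as a gauge groupoid over one source fibre and recognizing the relevant normal subgroupoid as an associated fibre bundle with structure group acting by conjugation — carries over verbatim with $(\cG_x,\cG_x)$ replaced by the $\cN_x(A)$-saturated subgroup $K_x$; one only has to check that $K_x$ is conjugation-invariant in $\cG(\gg_x)$, which holds because $K_x$ is defined as the kernel of a homomorphism out of $\cG(\gg_x)$ (invariance of kernels under the relevant adjoint action). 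I would also need to note that $K_x$ is connected (it is the image of the exponential of a subspace of the abelian $\gg_x$, as in the proof of Proposition \ref{prop:genus:monodromy}), which guarantees that $K_x/\cN_x(A)$ is an honest embedded Lie subgroup once it is closed, rather than merely an immersed one.
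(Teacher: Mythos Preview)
Your proposal is correct and follows essentially the same approach as the paper's (very terse) proof: for the forward direction both argue that smoothness of $\cG_g(A)$ makes $\cG_g(A)^0_x$ a Lie group, so the continuous homomorphism $\phi$ is a Lie group morphism with closed kernel; for the converse both use integrability to get $\cG(A)$ smooth and then realize $\cG_g(A)$ as the quotient of $\cG(A)$ by the normal bundle of groups with fiber $K_x/\cN_x(A)$, which is closed when $K_x$ is. Your version is more explicit in invoking the gauge-groupoid/associated-bundle description from Proposition~\ref{prop:abel:grpd} to pass from closedness at a single point to closedness of the whole subgroupoid, a step the paper leaves implicit.
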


\begin{proof}
Assume that the genus integration $\cG_g(A)$ is smooth. Then $\cG_g(A)_x^0$ is a Lie group and so the continuous group homomorphism $\phi:\cG(\gg_x)\to \cG_g(A)_x^0$ is a morphism of Lie groups. Hence, its kernel is a closed Lie subgroup of $\cG(\gg_x)$. 

Conversely, assume that the kernel of $\phi:\cG(\gg_x)\to \cG_g(A)_x^0$ is closed and that $A$ is integrable. Then $\cG(A)$ is smooth and $\cG_g(A)$ is the quotient of $\cG(A)$ by the normal bundle of groups with fiber $\Ker \phi/\cN_x$. Since this bundle is closed, we conclude that $\cG_g(A)$.
\end{proof}

Notice that the Lie algebroid of $\cG_g(A)$ will now be the quotient $A/\hh_L$, where $\hh_L\subset \gg_L$ is the bundle of Lie subalgebras with fiber $\hh_x$ the Lie algebra of $\Ker \phi_x$. So the remaining question is how to describe the kernel of $\phi:\cG(\gg_x)\to \cG_g(A)_x^0$ in terms of infinitesimal data when the holonomy of $A$ is not trivial. Let us give such a description and then we will justify it assuming that $A$ is integrable.

Let $q:\tilde L^h\to L$ be the holonomy cover of $A$. Let $\gamma:\Delta_{4n}\to L$ be a parameterized compact surface of genus $n$ in $L$, so $\gamma$ identifies the edges of the polygon $\Delta_{4n}$ in the usual manner. By a {\bf lift to $\tilde L^h$ of $\gamma$} we will mean a smooth map $\tilde{\gamma}:\Delta_{4n}\to \tilde L^h$ such that $\gamma=q\circ\tilde{\gamma}$. We have the following description of the kernel of $\phi:\cG(\gg_x)\to \cG_g(A)^0_x$ generalizing Proposition \ref{prop:genus:monodromy}:

\begin{proposition}
\label{prop:non-trivial:holonomy}
Let $A\to L$ be an integrable abelian transitive Lie algebroid. The kernel of the morphism $\phi:\cG(\gg_x)\to \cG_g(A)^0_x$ is given by:
\[ \Ker \phi_x=\left\{\exp\left(\int_{\tilde\gamma}\Omega\right): \textrm{ $\tilde{\gamma}$ a lift to $\tilde L^h$ of a compact surface $\gamma$ in $L$}\right\}. \]
\end{proposition}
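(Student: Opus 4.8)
The plan is to mimic the proof of Proposition~\ref{prop:genus:monodromy} essentially verbatim, the one new phenomenon being that a compact surface in $L$ no longer lifts to a \emph{closed} surface in the holonomy cover $\tilde L^h$, but only to a polygon mapping to $\tilde L^h$ --- and this is exactly what forces the lifts $\tilde\gamma$ to appear in the statement. Since $A=A^\ab$ is abelian, $\exp\colon\gg_x\to\cG(\gg_x)$ is surjective, so it suffices to describe the $v\in\gg_x$ for which $\exp(v)\in\Ker\phi_x$; and, just as at the start of the proof of Proposition~\ref{prop:genus:monodromy}, $\exp(v)\in\Ker\phi_x$ if and only if the constant $\gg_x$-path $a_0\equiv v$ is $A$-homologous to the trivial path $0_x$.

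For the inclusion ``$\subseteq$'', I would take an $A$-homology $h\colon T\Sigma\to A$ between $a_0\equiv v$ and $0_x$, with $\Sigma$ a square with genus $n$. Since both boundary $A$-paths have constant base path $x$, the collar boundary conditions force the base map $\gamma_0\colon\Sigma\to L$ of $h$ to send all of $\partial\Sigma$ to $x$; hence $\gamma_0$ factors through the closed genus-$n$ surface $\Sigma/\partial\Sigma$, giving a parametrized compact surface $\gamma\colon\Delta_{4n}\to L$ with marked point $x$. As $\Delta_{4n}$ is contractible, $\gamma$ lifts to some $\tilde\gamma\colon\Delta_{4n}\to\tilde L^h$ with $q\circ\tilde\gamma=\gamma$, and along $\tilde\gamma$ the canonical trivialization of $q^*\gg_L$ exhibits $\gamma^*\gg_L$ as the trivial flat bundle. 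Running the computation of Proposition~\ref{prop:genus:monodromy} in this trivialization --- choose an adapted splitting $\sigma$, identify $A\cong TL\oplus\gg_L$, write $h=a\,\d t+b\,\d\eps$ with $\gg$-components $\varphi,\psi$, which then satisfy $\partial_t\psi-\partial_\eps\varphi=\Omega(\tfrac{\d\gamma}{\d t},\tfrac{\d\gamma}{\d\eps})$ together with $\varphi(t,0)=v$, $\varphi(t,1)=0$ and $\psi(0,\eps)=\psi(1,\eps)=0$ --- and integrating twice yields $v=\int_{\Delta_{4n}}\tilde\gamma^*\Omega=\int_{\tilde\gamma}\Omega$.

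For the inclusion ``$\supseteq$'' I would reverse the construction. Given a lift $\tilde\gamma\colon\Delta_{4n}\to\tilde L^h$ of a parametrized compact surface $\gamma$ in $L$ with marked point $x$, view $\gamma$ --- as in Proposition~\ref{prop:genus:monodromy} --- as a homotopy rel endpoints between the constant loop at $x$ and $\tau=\prod_i(\gamma_i,\eta_i)$, choose a splitting $\sigma$, put $b=\sigma(\tfrac{\d\gamma}{\d\eps})$, and solve \eqref{diffeq} with initial condition $a(0,\cdot)=0$, all inside the trivialization of $q^*\gg_L$ determined by $\tilde\gamma$. The resulting Lie algebroid morphism $h\colon T(I\times I)\to A$ is an $A$-homology covering $\gamma$ between $0_x$ and an $A$-path which, by the same splitting-and-averaging argument, is $A$-homotopic to the constant $\gg_x$-path $\int_{\Delta_{4n}}\tilde\gamma^*\Omega=\int_{\tilde\gamma}\Omega$. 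Thus $\int_{\tilde\gamma}\Omega\genus 0_x$, so $\exp\bigl(\int_{\tilde\gamma}\Omega\bigr)\in\Ker\phi_x$.

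The step I expect to be the main obstacle is the bookkeeping that replaces the fixed global trivialization available in Proposition~\ref{prop:genus:monodromy}: one must check that $\Sigma$ (resp.\ the compact surface) can be cut into the polygon $\Delta_{4n}$ with \emph{smooth} generators $\{\gamma_i,\eta_i\}$, that the cutting and regluing keep the relevant Lie algebroid morphisms smooth and with the correct boundary conditions, and --- crucially --- that the lift $\tilde\gamma$ is chosen compatibly, so that the endpoint value $v$ produced by the PDE is genuinely $\int_{\tilde\gamma}\Omega$ read off in the canonical trivialization at the marked point. This is where integrability of $A$ is used: one transports the whole argument to the Lie groupoid $\cG(A)$ and its source $1$-connected cover, where homotopies of paths in source fibres can be cut and reglued transparently, and then returns to $A$-paths using \cite{CF3}. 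A minor auxiliary point is that $\int_{\tilde\gamma}\Omega$ depends only on $\tilde\gamma$ and not on the auxiliary splitting or on the chosen connection $\nabla^L$ on $TL$, which holds because $\Omega$ and $\nabla$ are splitting-independent and $\gg_L$ is canonically flat.
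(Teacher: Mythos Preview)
Your approach is plausible and, in outline, correct --- indeed, the paper itself remarks immediately after its proof that ``one should be able to adapt the technique of the proof of Proposition~\ref{prop:genus:monodromy}'' to establish this result, and your proposal is exactly that adaptation, with the key new idea being to use the lift $\tilde\gamma$ to $\tilde L^h$ to trivialize $\gamma^*\gg_L$ over the simply connected polygon. However, the paper's own argument is genuinely different: rather than re-running the PDE computation, it exploits integrability directly by realizing $\cG(A)$ as the gauge groupoid of the principal $G$-bundle $P=\s^{-1}(x)\to L$ (with $P$ $1$-connected), invokes Theorem~\ref{thm:main:1} to identify $\Ker\phi_x$ with $(G,G)\cap G^0$, and then proves the set-theoretic equality
\[
(G,G)\cap G^0=\left\{\exp_G\left(\int_{\tilde\gamma}\Omega\right)\right\}
\]
via a parallel-transport lemma (Stokes' formula gives $\exp(\int_D q^*\Omega)=\mathcal{T}_{\partial D}$) together with a \emph{fat} connection whose holonomy is all of $G$. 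The fat connection lets one realize arbitrary products of commutators as holonomies along boundaries of $4n$-gons in $P$; $1$-connectedness of $P$ fills these in; projecting to $L$ and $\tilde L^h$ produces the surface $\gamma$ and its lift. What the paper's route buys is that the delicate cutting/regluing bookkeeping you identify as the main obstacle is entirely absorbed into standard facts about principal connections and holonomy, at the cost of requiring integrability; what your route buys is a more direct line to the non-integrable case, though the edge-identification and boundary-condition issues (e.g., your claimed $\psi(0,\eps)=\psi(1,\eps)=0$ in the $(4n{+}4)$-gon parametrization, and whether the solved $a$ respects the identifications needed for $h$ to descend to $T\Sigma$) remain to be checked carefully rather than deferred to a groupoid argument.
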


\begin{proof}
Since we assume that $A$ is integrable, the groupoid $\cG(A)$ is isomorphic to the gauge groupoid of a principal $G$-bundle $P\to L$, with $P=\s^{-1}(x)$ 1-connected and $G=\cG(A)_x$. Note that $G$ is not connected in general, so it is not abelian, although both $\gg= \gg_x$ and $G^0$ are abelian, and this is crucial for what follows.

The isotropy bundle of $\cG(A)$ coincides with the adjoint bundle:
\[ \gg_L:=P\times_G\gg. \]
A choice of splitting $\sigma$ of the anchor of $A$ is the same thing as choice of a principal bundle connection $\theta\in\Omega^1(P;\gg)$, and it induces a flat connection $\nabla$ on $\gg_L$. Two principal bundle connections $\theta_1$ and $\theta_2$ induce the same flat connection on $\gg_L$, which corresponds to the fact that $\gg_L\to L$ is canonically a flat vector bundle. As usual, we let $q:\tilde L^h\to L$ denote the holonomy cover and by $\Omega\in\Omega^2(L,\gg_L)$ the curvature 2-form of the connection $\theta$, which is the same as the curvature of the splitting. Moreover, according to \cite[Thm II.8.2]{KN}, we can assume that the connection $\theta$ is ``fat'', i.e., that its holonomy coincides with the structure group $G$. 

The adjoint representation gives a short exact sequence:
\[ 
\xymatrix{ 1\ar[r] & K\ar[r] & G\ar[r]& \Ad(G) \ar[r] & 1}
\]
where the group $\Ad(G)$ is precisely the holonomy group of the flat connection $\nabla$. Note that $G^0\subset K\subset G$. This means that we can view $P$ both as principal $K$-bundle over $\tilde L^h$ or as principal $G$-bundle over $L$:
\[
\xymatrix{
 & P\ar[dl]\ar[dr] \\
 \tilde L^h \ar[rr]_q & & L}
\]
Moreover, the connection $\theta\in\Omega^1(P;\gg)$ is also principal $K$-bundle connection with curvature 2-form $q^*\Omega$. A simple application of Stokes formula leads to the following.

\begin{lemma}
\label{lem:holonomy}
Let $D\subset \tilde L^h$ be an embedded oriented 2-disc. Then:
\[ \exp\left(\int_D q^*\Omega \right)=\mathcal{T}_{\partial D},\]
where $\mathcal{T}_{\partial D}$ is $\theta$-parallel transport along the oriented curve $\partial D$.
\end{lemma}

Now we are ready to prove the proposition. The map $\phi:\cG(\gg_x)\to \cG_g(A)_x^0$ fits into the diagram:
\[
\xymatrix{
& G(\gg_x)\ar[d]\\
\gg_x\ar[r]_{\exp_G}\ar[ru]^{\exp}& G^0 \ar[d] \\
& \cG_g(A)_x^0
}
\]
where the exponentials are surjective (actually covering maps), since $\gg_x$ is abelian. By  Theorem \ref{thm:main:1}, the kernel of the map $G^0\to \cG_g(A)_x^0$ is
\[ (G,G)\cap G^0, \]
so the proposition will follow if we show that:
\begin{equation}
\label{eq:non-trivial:holonomy}
(G,G)\cap G^0=\left\{\exp_G\left(\int_{\tilde\gamma}\Omega\right): \textrm{ $\tilde{\gamma}$ a lift to $\tilde L^h$ of a compact surface $\gamma$ in $L$}\right\}.
\end{equation}

Let $(g_1,h_1)\cdots (g_n,h_n)\in G^0$ with $g_i,h_i\in G$. Since the connection is fat, we can choose horizontal paths in $P=\s^{-1}(x)$ connecting successively the points:
\[ 1_x,~g_1,~g_1h_1,~g_1h_1 g_1^{-1},~(g_1,h_1),~(g_1,h_1)g_2,~(g_1,h_1)g_2h_2,~\dots~,(g_1,h_1)\cdots (g_n,h_n). \] 
Finally, we connect the last point back with $1_x$ using a curve in $G^0$. 
 \begin{figure}[h]
    \centering
    \includegraphics[height=5cm]{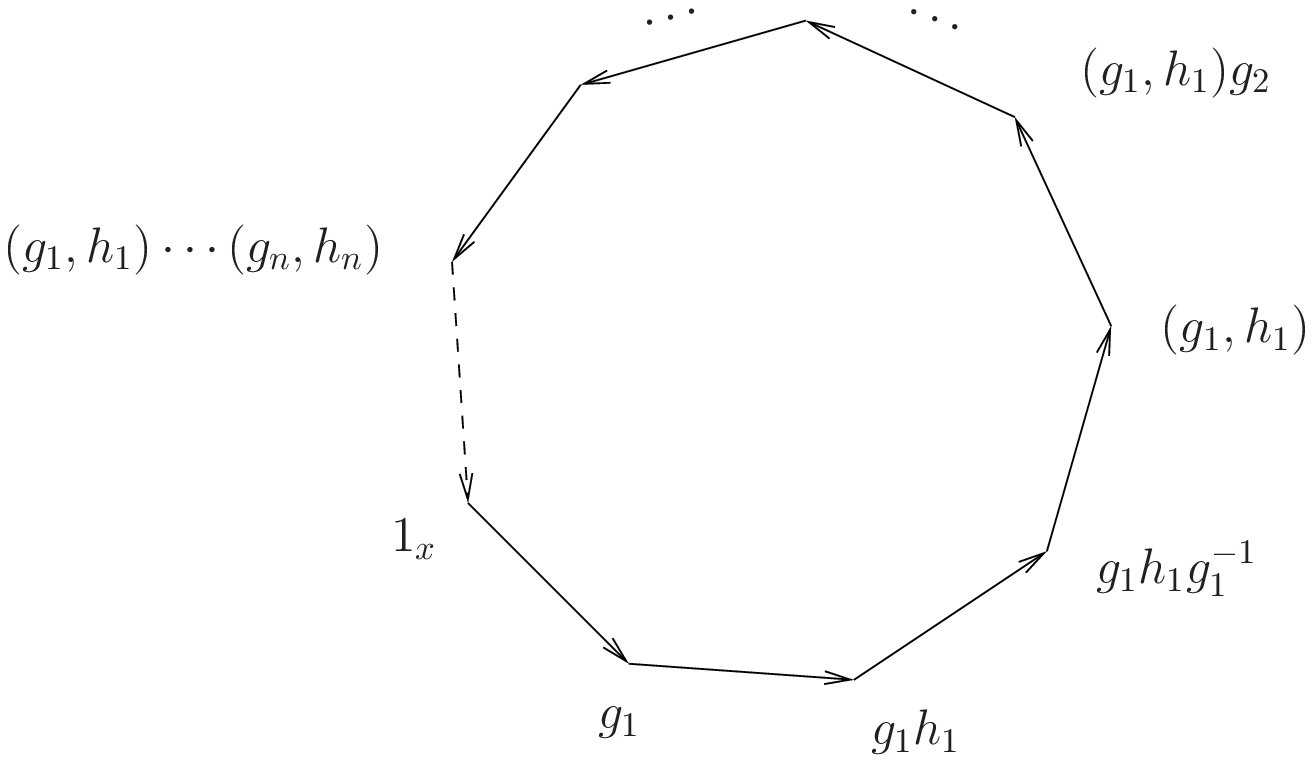}
    \label{GHomotopy}
\end{figure}

Since $P$ is 1-connected, there is a  smooth map $\bar{\gamma}:\Delta_{4n} \to P$ from the $4n$-gon, such that the first $4n-1$ sides of $\Delta_{4n}$ are mapped to first $4n-1$ horizontal curves, while the last side of $\Delta_{4n}$ is mapped to the concatenation of the last two curves. Composing $\bar{\gamma}:\Delta_{4n} \to P$ with the projection $P\to L$, we obtain a smooth map $\gamma:\Delta_{4n}\to L$ where the sides of the $4n$-gon are identified as in a surface of genus $n$. In other words, $\gamma$ parameterizes a closed surface $\Sigma$ of genus $n$. On the other hand, composing $\bar{\gamma}:\Delta_{4n} \to P$ with the projection $P\to \tilde L^h$ gives us a lift $\tilde{\gamma}::\Delta_{4n} \to \tilde L^h$ of $\gamma$. Moreover, by the lemma above we have:
if $a_i$ and $b_i$ are the sides of $\gamma$, we obtain:
\[ \exp_G\left(\int_{\tilde\gamma}\Omega\right)=\prod_{i=1}^{n} \left(\mathcal{T}_{a_i},\mathcal{T}_{b_i}\right)=(g_1,h_1)\cdots (g_n,h_n). \]
This shows the inclusion $\subset$ in \eqref{eq:non-trivial:holonomy}. 

We claim that the inclusion $\supset$ in \eqref{eq:non-trivial:holonomy} also holds: if we are given $\tilde{\gamma}$, a lift to $\tilde L^h$ of a compact surface $\gamma$ in $L$, then the sides of $\tilde{\gamma}$ project to curves $a_i$ and $b_i$, which are cycles generating the fundamental group of $\gamma$. The lemma then shows again that:
\[ \exp_G\left(\int_{\tilde\gamma}\Omega\right)=\prod_{i=1}^{n} \left(\mathcal{T}_{a_i},\mathcal{T}_{b_i}\right)\in (G,G)\cap G^0, \]
as claimed.
\end{proof}

One should be able to adapt the technique of the proof of Proposition \ref{prop:genus:monodromy} to show that Proposition \ref{prop:non-trivial:holonomy} also holds in the non-integrable case.

\begin{remark}
It is worth to see what Lemma \ref{lem:holonomy} says about the extended monodromy when $A^\ab$ is integrable. In this case, taking again for $P$ the source fiber $\s^{-1}(x)$ of $\cG(A^\ab)$, we conclude that if $[\gamma]\in H_2(\tilde L^h,\Z)$ is represented by a closed surface $\gamma\subset \tilde L^h$ of genus $n$, and we let $a_1,\dots,a_{n}$, $b_1,\dots,b_{n}$ be cycles based at some $x\in \gamma$ generating $\pi_1(\gamma,x)$, then:
\[  \exp_G\left(\int_{\gamma} q^*\Omega_\theta\right)=\prod_{i=1}^{n} \left(\mathcal{T}_{a_i},\mathcal{T}_{b_i}\right)=\mathcal{T}_{\prod_{i=1}^{n} (a_i,b_i)}. \]
It follows that we have a commutative diagram:
\[
\xymatrix@C=4pt{
\pi_2(L,x)\ar[drrrr]^\partial\ar[d]_h\\ 
H_2(\tilde L^h,\Z)\ar[d]\ar[rrrr]_{\partial^\ext}& &&&\cG(\gg^\ab_x) \ar[d]\\
\coKer h\ar[rrrr]_{\partial^P} && &&G^0 \ar@{=}[r] &\cG(A^\ab)_x^0
}
\]
The previous formula says that the induced map $\partial^P$ only depends on homotopy classes of paths in $\tilde L^h$. This can be thought of as an illustration of Hopf's Theorem stating that the cokernel of the Hurewicz map $h:\pi_2(\tilde L^h)\to H_2(\tilde L^h,\Z)$ is the 2nd group homology $H_2(\pi_1(\tilde L^h),\Z)$.
\end{remark}

\subsection{The proper case}
The extended monodromy groups were introduced in \cite{CFMb} in the special case of cotangent Lie algebroids of regular Poisson manifolds, as obstructions to the existence of proper integrations. Our results have the following corollary, which generalizes a result of \cite{CFMb} for the special case of cotangent Lie algebroids of regular Poisson manifolds:

\begin{theorem}
\label{thm:proper}
If a Lie algebroid $A\to M$ admits an s-proper integration $\cG\tto M$ then the extended monodromy groups are discrete.
 \end{theorem}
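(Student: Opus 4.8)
The plan is to localize the problem to a single leaf and reduce to the abelian case, then exploit that an s-proper integration forces the relevant isotropy subgroups to be \emph{closed}. First I would observe that the extended monodromy group $\cN^\ext_x(A)$ depends only on $A^\ab_L$, where $L$ is the leaf through $x$, and that discreteness of $\cN^\ext_x(A)$ is equivalent (by Proposition~\ref{prop:genus:monodromy}, after pulling back to the holonomy cover via Theorem~\ref{thm:main:2:hol}, or rather via Proposition~\ref{prop:non-trivial:holonomy}) to the kernel of $\phi:\cG(\gg_x)\to\cG_g(A)^0_x$ being closed, i.e.\ to $(G,G)\cap G^0$ being closed in $G^0$, where $G=\cG(A)_x$. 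So it suffices to show that s-properness of an integration $\cG$ forces this closedness.

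Next I would pass from $\cG$ to the source $1$-connected integration $\cG(A)$: since $\cG$ is s-proper, its s-fibers are compact, hence the s-fibers of $\cG(A)=\widetilde{\cG}$ (which covers $\cG$) need not themselves be compact, but the isotropy groups are still constrained. The key point is the restriction to the leaf: $A_L$ is integrable, $\cG(A_L)=\cG(A)|_L$ is a transitive Lie groupoid with isotropy $G:=\cG(A)_x=\cG(\gg_x)/\cN_x(A)$, and $\cG|_L$ is a proper transitive Lie groupoid with isotropy a \emph{compact} Lie group $\cG_x$. Since $G\to\cG_x$ is a covering of Lie groups with the same Lie algebra, $G^0$ is an integration of $\gg_x$ covering the connected compact group $\cG_x^0$; and the commutator subgroup of a \emph{compact} connected Lie group is closed (it is the connected integration of $[\gg_x,\gg_x]$, which in the compact case is an algebraic, hence closed, subgroup — see \cite[\S XII]{Chevalley}). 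Pulling this back, $(G,G)\cap G^0$ is the preimage of $(\cG_x^0,\cG_x^0)$ under the covering $G^0\to\cG_x^0$, hence closed. By Proposition~\ref{prop:genus:kernel} (its necessity direction) this closedness is exactly what makes $\Ker\phi$ closed, and then Propositions~\ref{prop:genus:monodromy} and \ref{prop:non-trivial:holonomy} identify this kernel with (the closure issue for) $\cN^\ext_x(A)$, giving discreteness. Since $x\in M$ was arbitrary, all extended monodromy groups are discrete.

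More precisely, the chain I would write out is: (1) the extended monodromy group at $x$ injects into $\cG(\gg_x^\ab)=\gg_x^\ab$ (as $\gg_x^\ab$ is abelian), so its discreteness is equivalent to its image being closed; (2) by the commutative diagram relating $\partial_x^\ext$ to $\phi$ — namely $\cN_x^\ext(A)=\exp^{-1}$ of $\Ker(\G^0\to\cG_g(A)^0_x)$ up to the covering $\exp:\gg_x\to\cG(\gg_x)$, cf.\ Proposition~\ref{prop:non-trivial:holonomy} — the extended monodromy is discrete iff $\Ker\phi_x$ is closed in $\cG(\gg_x)$ iff $(G,G)\cap G^0$ is closed in $G^0$; (3) an s-proper $\cG$ restricts to a proper transitive groupoid over $L$ with compact isotropy $\cG_x$, and the source $1$-connected cover has isotropy $G$ with $G^0$ covering $\cG_x^0$; (4) compactness of $\cG_x^0$ makes $(\cG_x^0,\cG_x^0)$ closed, whence $(G^0,G^0)=(G,G)\cap G^0$ (these agree since $G^0$ is a normal subgroup and, $\gg_x$ being not necessarily abelian here, one checks $(G,G)\subset G^0$ because $\Ad(G)$ is a \emph{flat} holonomy group — actually more simply, $(G,G)$ is connected as $G^0$ is, being generated by images of connected sets) is closed as the preimage of a closed set.

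\medskip

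\textbf{Main obstacle.} The delicate step is the transition between the given (possibly non-$1$-connected, possibly non-source-connected) s-proper integration $\cG$ and the objects $\cG(\gg_x)$, $G^0$, $\cG_g(A)^0_x$ appearing in Propositions~\ref{prop:genus:monodromy}--\ref{prop:non-trivial:holonomy}: one must be careful that compactness of the isotropy $\cG_x$ of $\cG|_L$ is what is needed, not compactness of s-fibers of $\cG$ globally (these are different, and the leaf $L$ may be non-compact, so $\cG|_L$ is not a \emph{proper} groupoid in the usual sense — but it \emph{is} s-proper, and that is enough to force the isotropy groups $\cG_x$ to be compact by properness of $(\s,\t)$ restricted over a point). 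Once one has $\cG_x$ compact, the commutator subgroup of its identity component is closed by the standard structure theory of compact Lie groups, and the rest is a diagram chase through the results already established in the paper. A secondary point to handle carefully is that $G$ itself need not be connected or abelian, so one cannot directly cite Example~\ref{ex:Lie:grp} in the form ``$(G,G)$ closed for $1$-connected $G$''; instead one argues at the level of identity components and uses that $(G,G)\subseteq G^0$ (which holds because $G^0$ is normal and $G/G^0$ — a quotient of $\pi_0$ — is the target of the composite $G\to\Ad(G)$, whose image is the holonomy group, but in fact the cleanest route is: $(G,G)$ is the image of the multiplication map on commutators, a continuous image of a connected space $G\times G$ through the identity, hence contained in $G^0$).
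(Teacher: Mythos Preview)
Your proposal contains two genuine errors, both already visible in the paper's Example~\ref{ex:gauge:groupoid} (the Atiyah algebroid $A=T\Ss^3/\mathrm{O}(2)\to\mathbb{RP}^2$).

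First, the identity $(G,G)\cap G^0=(G^0,G^0)$ you assert in step~(4) is false. In that example $G=\cG(A)_x\cong\mathrm{O}(2)$, so $G^0=\mathrm{SO}(2)$ is abelian and $(G^0,G^0)=\{e\}$, whereas $(G,G)=(\mathrm{O}(2),\mathrm{O}(2))=\mathrm{SO}(2)$, giving $(G,G)\cap G^0=\mathrm{SO}(2)\ne\{e\}$. Your justification that ``$(G,G)$ is a continuous image of the connected space $G\times G$'' fails because $G$ is not connected; and pulling back $(\cG_x^0,\cG_x^0)$ along the covering $G^0\to\cG_x^0$ only shows $(G^0,G^0)$ is closed, which is not what you need.

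Second, and more seriously, step~(2) claims that discreteness of $\cN_x^\ext(A)$ is equivalent to closedness of $\Ker\phi_x$, citing Proposition~\ref{prop:non-trivial:holonomy}. But that proposition does \emph{not} identify $\Ker\phi_x$ with $\cN_x^\ext(A)$ when the holonomy is non-trivial: it describes $\Ker\phi_x$ via integrals over \emph{lifts} to $\tilde L^h$ of surfaces in $L$, not over \emph{closed} surfaces in $\tilde L^h$, and the paragraph preceding it explicitly warns that these differ. In the same example $\cG_g(A)_x^0=\{1\}$, so $\Ker\phi_x=\cG(\gg_x)=\R$; your chain would then only conclude ``$\R$ is closed in $\R$'', which says nothing about $\cN_x^\ext$.

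The paper's proof avoids both problems. It applies Lemma~\ref{lem:closed:commutator} to the \emph{compact} group $\cG_x$ (not to $G$), quotients the given s-proper $\cG$ by the closed bundle $(G_L^0,G_L^0)$ to obtain an s-proper integration of $A^\ab_L$, and so reduces to the abelian case. The key step you are missing is then: compactness of $\cG_x$ together with $G^0$ abelian forces $\Ad(G)$, hence the holonomy, to be \emph{finite}. Thus $q:\tilde L^h\to L$ is a finite cover, $q^*\cG$ is again s-proper, and since $q^*A$ has trivial holonomy one can now legitimately produce an abelian integration and invoke Theorem~\ref{thm:main:2:hol}.
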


For the proof, we will use the following result.

\begin{lemma}
\label{lem:closed:commutator}
If $G$ is a compact Lie group then the commutator subgroup $(G,G)\subset G$ is a closed subgroup.
\end{lemma}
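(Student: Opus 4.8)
The plan is to reduce, in three steps, to the case of a central extension of a finite group by a torus, where a cohomological finiteness finishes the argument. I would begin with the connected case: if $G$ is compact and connected, then $\gg$ carries an $\Ad$-invariant inner product, so $\gg=\zz\oplus[\gg,\gg]$ with $\zz$ the center and $[\gg,\gg]$ semisimple of compact type. The connected Lie subgroup $S\subseteq G$ integrating $[\gg,\gg]$ equals $(G,G)$ by the result already cited in the paper (\cite[$\S$XII]{Chevalley}), and since the simply connected group with Lie algebra $[\gg,\gg]$ is compact by Weyl's theorem, $S$ is a continuous image of a compact group, hence compact, hence closed in $G$.

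For general compact $G$, set $S:=(G^0,G^0)$. By the previous step $S$ is compact, and it is normal in $G$ because its Lie algebra $[\gg,\gg]$ is $\Ad(G)$-invariant. Since $S\subseteq(G,G)$, we have $(G,G)=\pi^{-1}\bigl((G/S,G/S)\bigr)$ for the projection $\pi\colon G\to G/S$, so it is enough to treat $G/S$; that is, I may assume $G^0=T$ is a torus. Write $F:=G/T$, a finite group, and $\phi_g:=\Ad(g)|_T\in\Aut(T)$, which depends only on the class of $g$ in $F$. The subtorus $T_1:=\sum_{g}\Im(\mathrm{id}-\phi_g)\subseteq T$ is a finite sum of subtori, hence a closed subtorus; it is $\Ad(G)$-invariant, hence normal; and it lies in $(G,G)$ because $[t,g]=(\mathrm{id}-\phi_g)(t)$ for $t\in T$ (writing $T$ additively). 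Passing to $G/T_1$, I may assume in addition that $T$ is central in $G$.

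It then remains to treat a central extension $1\to T\to G\to F\to 1$ with $T$ a torus and $F$ finite. Every commutator $[g,h]$ in $G$ depends only on the images of $g,h$ in $F$, since the central $T$-factors cancel; hence, fixing a set-theoretic section $s\colon F\to G$, the group $(G,G)$ is generated by the finitely many elements $[s(a),s(b)]$, all of which lie in the subgroup $\langle s(F)\rangle$. To see that $\langle s(F)\rangle$ is finite I would use the finiteness of $H^2(F;T)$: from the coefficient sequence $0\to\Lambda\to\operatorname{Lie}(T)\to T\to 0$ and the vanishing of the cohomology of a finite group with coefficients in a $\Q$-vector space, $H^2(F;T)\cong H^3(F;\Lambda)$, which is finite. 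Letting $m$ be its exponent, $m$ times the class of the extension vanishes, and since $T$ is divisible one may re-choose $s$ so that its $2$-cocycle takes values in the finite subgroup $T[m]=\{t\in T:t^m=1\}$; then $\langle s(F)\rangle\subseteq T[m]\cdot s(F)$ is finite, so $(G,G)$ is finite and, a fortiori, closed.

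The step I expect to be the main obstacle is this last one: recognizing that in the central case the failure of closedness is governed precisely by the torsion of $H^2(F;T)$, and carrying out the cohomological adjustment of the section so that the resulting cocycle is torsion-valued. By contrast, the three reduction steps are routine structure theory once the connected case is in hand.
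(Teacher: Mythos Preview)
Your argument is correct but follows a quite different path from the paper's. The paper's proof is a two-line application of a Bourbaki result: there exist a neighborhood $U$ of the identity and an integer $p'$ such that every element of $(G,G)\cap U$ is a product of at most $p'$ commutators; compactness of $G$ (finitely many powers of $U$ cover $G^0$, and $G/G^0$ is finite) upgrades this to a uniform bound $p$ on commutator length throughout $(G,G)$, after which closedness follows from a straightforward sequential compactness argument.

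Your proof instead performs a structural reduction in three stages---first to the torus-by-finite case via Weyl's theorem and quotienting by $(G^0,G^0)$, then to the \emph{central} torus-by-finite case by quotienting out the subtorus $T_1$ generated by all $\Im(\mathrm{id}-\phi_g)$, and finally invoking finiteness of $H^2(F;T)\cong H^3(F;\Lambda)$ to adjust the section so that its cocycle is torsion-valued, forcing $(G,G)$ to be finite. Each step is sound; in particular the identity $(G,G)=\pi^{-1}\bigl((G/N,G/N)\bigr)$ whenever $N\trianglelefteq G$ is contained in $(G,G)$, and the divisibility argument for re-choosing the section, are both correct as you use them. The trade-off is that the paper's proof is much shorter and needs only one black-box input plus compactness, whereas yours is self-contained structure theory and actually yields more: in the reduced central case $(G,G)$ is \emph{finite}, and unwinding the reductions shows that for any compact $G$ the commutator subgroup has only finitely many connected components with identity component $(G^0,G^0)\cdot T_1$.
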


\begin{proof}
By \cite[Chp.~3, $\S 9$, Exer.~9]{Bourbaki}, for any Lie group there is a fixed integer $p'$ and a neighborhood of the identity $U$ such that any element of $(G,G)\cap U$ is a product of $p'$ commutators. For a compact Lie group $G$, the connected component $G^0$ is the union of a finite number of powers $U^n$ and the quotient $G/G^0$ is finite, so it follows that there is a fixed integer $p$ such that any element in $(G,G)$ is a product of $p$ commutators. 

Now given a sequence $g_n=(a_n^1,b_n^1)\cdot (a_n^2,b_n^2)\dots \cdot (a_n^p,b_n^p)\in (G,G)$ such that $g_n\to g\in G$, by compactness, we can find convergent subsequences $a_{n_k}^i\to a^i$ and  $b_{n_k}^i\to b^i$, and it follows that $g=(a^1,b^1)\cdot (a^2,b^2)\dots \cdot (a^p,b^p)\in (G,G)$. This shows that $(G,G)$ is closed, so the lemma holds.
\end{proof}

\begin{proof}[Proof of Theorem \ref{thm:proper}]
Let $\cG\tto M$ be an s-proper integration $A\to M$. The restriction $\cG_L\tto L$ to a leaf $L$ of $A$ is also an s-proper Lie groupoid and the extended monodromy groups $\cN_x$, with $x\in L$, only depend on this restriction. So we can assume that $A\to L$ is transitive.

By the lemma, the group bundle $(G_L^0,G^0_L)$ formed by the commutators $(\cG_x^0,\cG_x^0)$, $x\in L$, is a closed, normal, subbundle of $\cG$ integrating the bundle of Lie algebras $[\gg_L,\gg_L]\to L$. Therefore, the quotient $\cG/(G_L^0,G^0_L)\to L$ is an s-proper Lie groupoid integrating the abelianization $A^\ab\to L$. This means that it is enough to prove the theorem if $A$ is abelian and transitive.

If the holonomy of $A\to L$ was trivial we would be done: we take an s-proper integration $\cG\tto L$ of $A$ and by the lemma  $(G_L,G_L)=\cup_{x\in L}(\cG_x,\cG_x)\subset\cG$ is a discrete, normal, subgroupoid, so the quotient 
\[ \cG/(G_L,G_L)\tto L, \]
is an abelian integration of $A_L^\ab$. By Theorem \ref{thm:main:2} the extended monodromy is discrete.

If the holonomy of $A\to L$ is not trivial, we observe that the existence of an s-proper integration implies that the holonomy is finite: since $G^0$ is abelian and $G$ is compact, it follows that $\Ad(G)$ is finite, and so is the holonomy of $\nabla$ (see the proof of Proposition \ref{prop:non-trivial:holonomy}). Hence, the holonomy cover $q:\tilde L^h\to L$ is finite and the pullback groupoid $q^*\cG$ is an s-proper integration of $q^*A\to L$. Since $q^*A$ has trivial holonomy, we can apply the previous argument and Theorem \ref{thm:main:2:hol} to conclude that the extended monodromy is discrete.
\end{proof}

{
\subsection{An example: prequantization and genus integration}

Let $\omega\in\Omega^2(M)$ be a symplectic form\footnote{The non-degeneracy condition actually plays no role in the following discussion, so one can assume that $\omega$ is just a closed 2-form.} and consider its group of periods
\[ \Per(\omega):=\{\int_\gamma \omega:[\gamma]\in H_2(M,\Z)\}\subset \R. \]
Then the {\bf prequantization condition} for $\omega$ is:
\[ \Per(\omega)=k\Z, \]
for some integer $k$. This condition is equivalent to the existence of a principal $\Ss^1$-bundle $\pi:P\to M$ with a connection $\theta\in \Omega^1(M,\R)$ such that:
\[ \pi^*\omega=\d\theta. \]
This fact is well-known, but we will give a new geometric proof below using the genus integration (see Theorem \ref{thm:prequantization}).

In this discussion there is implicit a normalization, namely that $\Ss^1=\R/\Z$. If we allow for any normalization $\Ss^1_a=\R/a\Z$, then we can rephrase the prequantization condition for $\omega$ as:
\[  \Per(\omega)\subset \R\ \text{\bf is a discrete subgroup}. \]
Indeed, this holds if and only if $\Per(\omega)=a\Z\subset\R$ for some $a\in\R$, and then this condition is equivalent to the existence of a principal $\Ss^1_a$-bundle $\pi:P\to M$ with a connection $\theta\in \Omega^1(M,\R)$ such that:
\[ \pi^*\omega=\d\theta. \]
In this more general form the prequatization condition allows for $a=0$, in which case $\Ss^1_0=\R$ (notice that we do not assume $M$ compact).

The prequantization condition in this more general form can be viewed as a condition on the extended monodromy of the so-called {\bf prequantization Lie algebroid} $A_\omega\to M$ (see, e.g., \cite{Cr}). This is a special case of the construction described after Corollary \ref{cor:ext:integ}, namely it is the central extension determined by $\omega$, viewed as a Lie algebroid 2-cocycle on $A=TM$: it is supported in the vector bundle
\[ A_\omega:=TM\oplus \R,\] 
with anchor $\rho:A\to TM$ the projection on the first factor and bracket given by:
\[ [(X,f),(Y,g)]:=([X,Y],X(g)-Y(f)+\omega(X,Y)). \]
Notice that $A_\omega$ is an abelian, transitive, Lie algebroid, so $A^\ab_\omega=A_\omega$. Its isotropy bundle is the trivial line bundle:
\[ \gg_M=M\times\R, \]
and the connection induced on $\gg_M$ by the canonical splitting $\sigma:TM\to A_\omega$ is the trivial connection:
\[ \nabla^\sigma_X f=\Lie_X f. \]
The curvature of this splitting is just the original 2-form:
\[ \Omega^\sigma=\omega, \]
so we find that the monodromy groups of $A_\omega$ coincide with the group of spherical periods of $\omega$:
\[ \cN_x=\SPer(\omega):=\{\int_\gamma \omega:[\gamma]\in\pi_2(M)\}\subset \R. \]
On the other hand, the holonomy of $\nabla^\sigma$ is trivial, so the extended monodromy groups of $A_\omega$ coincide with the group of periods of $\omega$:
\[ \cN_x^\ext=\Per(\omega)=\{\int_\gamma \omega:[\gamma]\in H_2(M,\Z)\}\subset \R. \]

Our results immediately lead to the following result:

\begin{theorem}
\label{thm:prequantization}
Let $\omega\in\Omega^2(M)$ be a closed 2-form. The following conditions are equivalent:
\begin{enumerate}[(i)]
\item $\omega$ is prequantizable: $\Per(\omega)=a\Z\subset\R$;
\item the genus integration of $A_\omega$ is smooth;
\item there is a principal $\Ss^1_a$-bundle $\pi:P\to M$ with connection $\theta\in \Omega^1(M,\R)$ such that:
\[ \pi^*\omega=\d\theta. \]
\end{enumerate}
\end{theorem}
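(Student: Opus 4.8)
The plan is to read off the three equivalences from Theorem~\ref{thm:main:2}, applied to the abelian transitive Lie algebroid $A_\omega$ (which has trivial holonomy), using the observation made in the discussion above that its extended monodromy groups coincide with $\Per(\omega)$. The equivalence (i)$\Leftrightarrow$(ii) is then immediate: by Theorem~\ref{thm:main:2} the genus integration $\cG_g(A_\omega)$ is smooth precisely when the extended monodromy groups are discrete, and a subgroup of $\R$ is discrete exactly when it has the form $a\Z$. Moreover, when this holds $\cG_g(A_\omega)$ has Lie algebroid $A_\omega^\ab=A_\omega$, which I will use below.

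For (iii)$\Rightarrow$(i) I would again invoke Theorem~\ref{thm:main:2}, in the direction (c)$\Rightarrow$(a). Given a principal $\Ss^1_a$-bundle $\pi\colon P\to M$ with connection $\theta$ satisfying $\pi^*\omega=\d\theta$, the connection identifies the Atiyah algebroid $TP/\Ss^1_a$ with $TM\oplus\R$ carrying the bracket twisted by the curvature $\omega$, i.e. with $A_\omega=A_\omega^\ab$; hence the gauge groupoid $(P\times P)/\Ss^1_a$ is an \emph{abelian} integration of $A_\omega^\ab$. Theorem~\ref{thm:main:2} then forces the extended monodromy groups of $A_\omega$ to be discrete, i.e. $\Per(\omega)=a'\Z$ for some $a'\in\R$, which is (i).

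The substantive step is (ii)$\Rightarrow$(iii): here one must genuinely manufacture the prequantum bundle from the genus integration. Assume $\cG_g(A_\omega)$ is smooth; then $\Per(\omega)$ is discrete, hence so is $\SPer(\omega)\subseteq\Per(\omega)$, so $A_\omega$ is integrable and both $\cG(A_\omega)$ and $\cG_g(A_\omega)$ are Lie groupoids, the latter with Lie algebroid $A_\omega$. Fix $x_0\in M$ and write $\cG_g(A_\omega)\cong(P_0\times P_0)/G$ for the source fibre $P_0:=\s^{-1}(x_0)$ and the isotropy group $G:=\cG_g(A_\omega)_{x_0}$. By Theorem~\ref{thm:main:1} the group $G$ is abelian (it is the abelianization of $\cG(A_\omega)_{x_0}$), and by Proposition~\ref{prop:genus:monodromy} its identity component is $G^0=\cG(\gg_{x_0})/\cN^\ext_{x_0}=\R/\Per(\omega)=\Ss^1_a$. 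The hard part is that $G$ need not be connected (in general $G/G^0\cong H_1(M;\Z)$), so $\cG_g(A_\omega)$ is not literally the gauge groupoid of a circle bundle over $M$. To get around this I would use that $G^0$ is divisible, hence the extension $1\to G^0\to G\to G/G^0\to 1$ of abelian groups splits; picking a complement, i.e. a discrete subgroup $\Gamma\subset G$ with $G=G^0\times\Gamma$, the $\Gamma$-action on $P_0$ by right translation is free and proper, so $P:=P_0/\Gamma$ is a principal $\Ss^1_a$-bundle over $M$.

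Finally, to exhibit the connection: under the identification of the Lie algebroid of $\cG_g(A_\omega)=(P_0\times P_0)/G$ with the Atiyah algebroid $TP_0/G$, the canonical splitting $\sigma\colon TM\to A_\omega$, $X\mapsto(X,0)$, becomes a principal connection $\Theta_0$ on $P_0$ with curvature equal to the curvature $\Omega^\sigma=\omega$ of the splitting; being $G$-invariant, $\Theta_0$ is in particular $\Gamma$-invariant and descends to a connection $\theta$ on $P=P_0/\Gamma$ with the same curvature, so $\pi^*\omega=\d\theta$. The routine verifications are that the $\Gamma$-quotients are smooth principal bundles (freeness and properness being inherited from the isotropy action on the source fibre) and that curvature is unaffected by the descent, since it is computed on the base $M$; I expect no real difficulty there, the genuine content being the two appeals to Theorem~\ref{thm:main:2} together with the structural description of $G$. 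When $M$ is disconnected one argues over each connected component separately.
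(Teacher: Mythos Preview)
Your proposal is correct and follows essentially the same approach as the paper: both invoke Theorem~\ref{thm:main:2} for (i)$\Leftrightarrow$(ii), use the gauge groupoid of $P$ as an abelian integration for (iii)$\Rightarrow$(i)/(ii), and for (ii)$\Rightarrow$(iii) exploit the divisibility of $\Ss^1_a$ to split off a discrete complement $\Gamma$ (the paper's $D$) inside the abelian isotropy group, then quotient to obtain a principal $\Ss^1_a$-bundle whose connection is induced by the canonical splitting $\sigma$. The only cosmetic difference is that the paper quotients the groupoid $\cG_g(A_\omega)$ by the normal bundle of groups with fiber $D$ and then takes a source fiber, whereas you take the source fiber $P_0$ first and then quotient by $\Gamma$---these are equivalent constructions.
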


\begin{proof}
As we observed above, the holonomy of $A_\omega$ is trivial, and the extended monodromy groups of $A_\omega$ coincide with the group of periods of $\omega$. Hence, the equivalence of (i) and (ii) follows from from Theorem \ref{thm:main:2:hol}.

If (iii) holds, then the gauge groupoid $\cG:=(P\times P)/\Ss^1_a\tto M$ is an abelian integration of $A_\omega$. Hence, by Theorem \ref{thm:main:2:hol}., (ii) holds. Conversely, assume that (ii) holds, so $\cG_g(A_\omega)\tto M$ is a transitive groupoid with abelian isotropy groups $\cG_g(A_\omega)_x$. Its connected component of the identity is the group $\Ss^1_a$, where $\Per(\omega)=a\Z\subset\R$. Since $\Ss^1_a$ is a divisible group, we have $\text{Ext}^1(A,\Ss^1_a)=0$, for any abelian group $A$. In particular, we have 
\[ \cG_g(A_\omega)_x\simeq \Ss^1_a\times D, \] 
where $D$ is a discrete group. So $\cG_g(A_\omega)_x$ contains a discrete, normal subgroup $D$, and we can quotient by $D$ to obtain a groupoid $\cG\tto M$, still integrating $A_\omega$, but with isotropy groups $\Ss^1_a$. A source fiber $s^{-1}(x)$ of this groupoid gives the desired principal circle bundle $P\to M$, and the canonical splitting $\sigma$ induces the desired connection $\theta$ on $P$.
\end{proof}

\begin{remark}
Note that it is possible to choose a manifold $M$ and a closed 2-form $\omega$ such that $\SPer(\omega)$ is discrete and $\Per(\omega)$ is non-discrete: for example, one can take $M=\Ss^1\times\Ss^1\times\Ss^1\times\Ss^1$ and 
\[ \omega=\d\theta_1\wedge\d\theta_2+\sqrt{2}\d\theta_3\wedge\d\theta_4. \]
In such case, we have that $A_\omega$ is integrable but it admits no abelian integration. Hence, the previous proposition is the correct extension of the results of \cite{Cr} to the non-simply connected case. In the simply connected case, the groups of periods and spherical periods coincide and we recover the results of \cite{Cr}.
\end{remark}
}
\section{Some open questions}

Our results on the genus integration raise many interesting questions which we reserve for future work. Here is a list of the ones, we believe, are more significant:
\begin{enumerate}[Q1.]
\item What are the obstructions to the existence of an abelianization of a non-transitive Lie algebroid/groupoid?
\item How does abelianization behave under Morita equivalence?
\item When $A=T^*M$ is the cotangent Lie algebroid of a Poisson manifold $(M,\pi)$, and $\cG_g(T^*M)$ is smooth, is it a symplectic groupoid?
\item Still for cotangent Lie algebroids of Poisson manifolds, is there a genus version of the Poisson sigma-model that leads to $\cG_g(T^*M)$?
\end{enumerate}

As pointed out by one referee, an alternative natural way of fitting in higher genus surfaces into the integration problem is to keep the Weinstein groupoid $\cG(A)$ and use open surfaces with boundary as 2-morphisms, thus promoting $\cG(A)$ to a 2-groupoid. In this way, the genus of the surface is remembered. Actually, the genus integration should be the first layer of higher structures involving genus homotopies (or cobordism) based on higher dimensional manifolds, instead of just surfaces. We hope to be able to develop these 
higher category aspects in future work.

\appendix
\section{On integration of algebroid morphisms}

In Section \ref{section:smooth:genus:integration} we have used the following general result about integration of algebroid morphisms. Since we are not aware of a reference for this result and it maybe of independent interest, we have included a detailed proof. Note that we do not make any integrability or transitivity assumptions.

\begin{proposition}
\label{prop:morphism}
Let $A\to M$ and $B\to M$ be Lie algebroids and let $p:A\to B$ be a surjective algebroid morphism covering the identity. Then the induced groupoid morphism 
\[ p_*:\cG(A)\to \cG(B), \quad [a]\mapsto [p(a)], \]
is surjective and has kernel the bundle of groups $K\to M$ whose fiber at $x\in M$ is given by:
\[ K_x=\widetilde{K}_x/\cN_x(A), \]
where  $\cN_x(A)\subset \cG(\gg_x(A))$ is the monodromy group of $A$ at $x$ and $\widetilde{K}_x\subset \cG(\gg_x(A))$ is the connected Lie subgroup integrating the Lie subalgebra $\Ker p_x\subset \gg_x(A)$.
\end{proposition}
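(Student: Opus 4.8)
The plan is to analyze the induced morphism $p_*:\cG(A)\to\cG(B)$ at the level of $A$-paths, since both Weinstein groupoids are quotients of path spaces and $p$ sends $A$-paths to $B$-paths compatibly with concatenation and $A$-homotopy. First I would establish \emph{surjectivity}: given a $B$-path $b:I\to B$ with base path $\gamma$, I want to lift it to an $A$-path $a:I\to A$ with the same base path, so that $p(a)=b$. Since $p$ is a surjective bundle map covering the identity, choose any (smooth) bundle splitting $s:B\to A$ of $p$ over $M$ and set $a(t):=s(b(t))$; this is smooth and $\rho_A(a(t))=\rho_B(p(a(t)))=\rho_B(b(t))=\dot\gamma(t)$ because $p$ intertwines the anchors, so $a$ is a genuine $A$-path. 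Hence $p_*[a]=[b]$ and $p_*$ is surjective.

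Next I would identify the kernel. Since $p_*$ is a groupoid morphism covering the identity, its kernel is a bundle of groups $K\to M$ with $K_x\subset\cG(A)_x$, namely $K_x=\{[a]\in\cG(A)_x : p(a)\sim 0_x \text{ in } P(B)\}$. To get a concrete description I would reduce from $A$-paths to $\gg$-paths (paths in the isotropy fibers). An element of $\cG(A)_x$ in the kernel is represented by an $A$-loop $a$ at $x$ whose image $p(a)$ is a $B$-loop $B$-homotopic to the constant loop; using the lifting of $B$-homotopies to $A$-homotopies (as in Appendix-style arguments of \cite{CF1}, or Proposition~\ref{prop:morphism}'s own machinery via the pullback $h^*A$ over the square) I can homotope $a$, rel endpoints, to an $A$-loop lying entirely in the isotropy $\gg_x(A)$, i.e. with constant base path $x$. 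So $K_x$ is the image in $\cG(A)_x$ of the set of $\gg_x(A)$-loops $a$ such that $p_x\circ a$ is $\gg_x(B)$-homotopic to zero. Now I invoke the structure of $\cG(\gg_x(A))$: the isotropy $\cG(A)_x^0=\cG(\gg_x(A))/\cN_x(A)$, and the restriction of $p_*$ to isotropy is the map $\cG(\gg_x(A))/\cN_x(A)\to\cG(\gg_x(B))/\cN_x(B)$ induced by the Lie algebra morphism $p_x:\gg_x(A)\to\gg_x(B)$, which exponentiates to the surjective Lie group morphism $P_x:\cG(\gg_x(A))\to\cG(\gg_x(B))$ with kernel the connected subgroup $\widetilde{K}_x$ integrating $\Ker p_x$. (Here functoriality of the $\gg\mapsto\cG(\gg)$ construction and of monodromy, as used earlier in the paper, gives $P_x(\cN_x(A))\subset\cN_x(B)$, so the map on quotients makes sense.) Tracing the diagram, $\Ker(p_*|_{\cG(A)_x})$ is exactly $\widetilde{K}_x\cdot\cN_x(A)/\cN_x(A)=\widetilde{K}_x/(\widetilde{K}_x\cap\cN_x(A))$; since $\cN_x(A)\subset\widetilde{K}_x$ (because $p_*(\cN_x(A))$ is trivial in $\cG(A)_x$ already — the monodromy sits in the identity component and in fact, by the commutativity of the functorial monodromy diagrams, inside $\widetilde K_x$), this simplifies to $K_x=\widetilde{K}_x/\cN_x(A)$, as claimed.

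The main obstacle I expect is the reduction step: showing that an element of $\Ker p_*$ can be represented by an $A$-loop that is \emph{constant on the base}, i.e. lies in a single isotropy fiber. This requires taking a $B$-homotopy $H:T(I\times I)\to B$ contracting $p(a)$ to $0_x$, pulling back the surjection $p$ along the base map of $H$ to get a surjection $p':H^*A\to T(I\times I)\times_{?}B$ over the square, and lifting $H$ to an $A$-homotopy $\widehat H:T(I\times I)\to A$ between $a$ and some $A$-loop $a'$ with $p(a')=0$, hence $a'$ valued in $\Ker p\subset A$; then since the base path of $a'$ is still a loop at $x$ but $a'$ lies in $\ker\rho$ along it... one further contracts the base loop. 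This is precisely the kind of morphism-lifting for the path-space construction that the paper defers to the Appendix, so I would either cite the relevant lifting lemma from \cite{CF1,CF3} or prove it directly using an auxiliary connection and the ODE \eqref{diffeq}. The remaining steps — surjectivity, the isotropy computation, and assembling the bundle-of-groups structure on $K$ (smoothness/topology of $K$ follows since $\cN_x(A)$ and $\widetilde K_x$ vary continuously) — are routine once this reduction is in hand.
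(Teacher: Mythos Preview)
Your overall strategy matches the paper's, but you make the reduction step harder than necessary, and you make one incorrect claim at the end.

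For the reduction: you propose lifting a $B$-homotopy contracting $p(a)$ to an $A$-homotopy, so as to produce a representative with constant base path. The paper bypasses this entirely. It cites two facts from \cite{CF1}: (a) the identity component $\cG(A)_x^0$ equals $\cG(\gg_x(A))/\cN_x(A)$ and consists precisely of classes of $A$-loops whose base loop is contractible in the leaf $L$; and (b) functoriality of monodromy. Now if $[a]\in\Ker p_*$ then $p(a)\sim 0_x$, so the base loop of $a$ (which equals the base loop of $p(a)$) is contractible in $L$, and hence $[a]\in\cG(A)_x^0$ automatically. No lifting of $B$-homotopies is required: one works directly with the induced map $\cG(\gg_x(A))/\cN_x(A)\to\cG(\gg_x(B))/\cN_x(B)$, exactly as you do afterwards. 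Your ``main obstacle'' evaporates once (a) is invoked in this form.

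The error: you assert $\cN_x(A)\subset\widetilde{K}_x$, justified by ``$p_*(\cN_x(A))$ is trivial in $\cG(A)_x$''. This conflates two different maps. Functoriality of monodromy gives $(p_x)_*(\cN_x(A))\subset\cN_x(B)$ --- in fact equality, since $p$ covers the identity so $A$ and $B$ share leaves and $\partial_x^B=(p_x)_*\circ\partial_x^A$ --- but not $(p_x)_*(\cN_x(A))=\{1\}$. For instance take $p=\mathrm{id}_A$: then $\widetilde{K}_x=\{1\}$ while $\cN_x(A)$ need not be trivial. Your preceding line already had the correct kernel $\widetilde{K}_x\cdot\cN_x(A)/\cN_x(A)$; the paper's notation $\widetilde{K}_x/\cN_x(A)$ should be read as exactly this (the image of $\widetilde{K}_x$ in $\cG(A)_x^0$), and no inclusion $\cN_x(A)\subset\widetilde{K}_x$ is needed or claimed.
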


\begin{proof}
Clearly, the morphism $p_*$ is surjective. To determine its kernel, we will make use of the following, which follows from the results in \cite{CF1}:
\begin{enumerate}[(a)]
\item The connected component of the identity of the isotropy group $\cG(A)_x$ consists of $A$-paths whose base path is a contractible loop based at $x$ and is given by:
\[ \cG(A)_x^0=\cG(\gg_x(A))/\cN_x(A), \]
\item For a surjective algebroid morphism $p:A\to B$ covering the identity the monodromy morphisms fit into a commutative diagram:
\[
\xymatrix{
 & \cG(\gg_x(A))\ar[dd]^{(p_x)_*}\\
\pi_2(L,x)\ar[ru]^{\partial_x^A}\ar[rd]_{\partial_x^B}\\
 & \cG(\gg_x(B))}
\]
\end{enumerate}

From (a) and (b) it follows that  the restriction of $p_*:\cG(A)\to \cG(B)$ to the connected component of the isotropy group
\[ (p_*)_x:  \cG(A)_x^0\to  \cG(B)_x^0, \]
is obtained by first integrating the Lie algebra morphism $p_x:\gg_x(A)\to \gg_x(B)$ to a morphism of the 1-connected Lie groups:
\[ (p_x)_*: \cG(\gg_x(A))\to \cG(\gg_x(B)), \]
and then quotienting this morphism by the monodromy groups:
\[ (p_*)_x=[(p_x)_*]: \cG(\gg_x(A))/\cN_x(A)\to \cG(\gg_x(B))/\cN_x(B). \]
In particular, it follows that the kernel of $p_*$ at $x$ is given by:
\[ K_x=\widetilde{K}_x/\cN_x(A), \]
where $\widetilde{K}_x\subset \cG(\gg_x(A))$ is the kernel of $(p_x)_*$. But by standard Lie theory, this is the connected Lie subgroup integrating the Lie subalgebra $\Ker p_x\subset \gg_x(A)$, so the proposition follows.
\end{proof}

\end{document}